\numberwithin{equation}{section}
\newtheorem{theorem}{Theorem}[section]
\newtheorem{proposition}[theorem]{Proposition}
\newtheorem{lemma}[theorem]{Lemma}
\newtheorem{corollary}[theorem]{Corollary}
\theoremstyle{definition}
\theoremstyle{remark}
\newcommand{\R}{\mathbb{R}}
\newcommand{\N}{\mathbb{N}}
\newcommand{\Z}{\mathbb{Z}}
\newcommand{\C}{\mathbb{C}}
\newcommand{\eps}{\varepsilon}
\newcommand{\scriptA}{\mathcal{A}}
\newcommand{\scriptB}{\mathcal{B}}
\newcommand{\scriptJ}{\mathcal{J}}
\newcommand{\scriptP}{\mathcal{P}}
\newcommand{\scriptW}{\mathcal{W}}
\newcommand{\qtq}[1]{\quad\text{#1}\quad}
\newcommand{\ctc}[1]{\,\,\text{#1}\,\,}
\DeclareMathOperator*{\ch}{ch}
\DeclareMathOperator*{\supp}{supp}
\DeclareMathOperator*{\diam}{diam}
\DeclareMathOperator*{\dist}{dist}
\begin{document}
\title{Uniform $L^p$-improving for weighted averages on curves}
\author{Betsy Stovall}
\address{Department of Mathematics, 480 Lincoln Dr., Madison, WI 53706--1325}
\email{stovall@math.wisc.edu}

\begin{abstract}
We define variable parameter analogues of the affine arclength measure on curves and prove near-optimal $L^p$-improving estimates for associated multilinear generalized Radon transforms.  Some of our results are new even in the convolution case.
\end{abstract}

\maketitle


\section{Introduction}


In this article we consider weighted versions of multilinear generalized Radon transforms of the form 
\begin{equation} \label{E:unweighted}
M_0(f_1,\ldots,f_k) := \int_{\R^d} \prod_{i=1}^k f_i\circ \pi_i(x)\, a(x)\, dx,
\end{equation}
where $a$ is a continuous cutoff function and the $\pi_i:\R^d \to \R^{d-1}$ are smooth submersions.  

In \cite{TW, BSrevista}, near endpoint estimates of the form 
\begin{equation} \label{E:bound unweighted}
|M_0(f_1,\ldots,f_k)| \leq C \prod_{i=1}^k \|f_i\|_{L^{p_i}(\R^{d-1})},
\end{equation}
with $C=C(\pi_1,\ldots,\pi_k,p_1,\ldots,p_k)$, were established for $M_0$ under the assumption that the $\pi_i$ satisfy a certain finite type condition on the support of $a$.  In particular, it was found that the exponents on the right on \eqref{E:bound unweighted} depend on this `type.'  These results are nearly sharp in the sense that if the type of the $\pi_i$ degenerates anywhere on the set where $a \neq 0$, then the corresponding near endpoint estimates also fail.  It is not, however, known in general what happens when the type degenerates at some point where $a \neq 0$ (for instance, on the boundary of the support) or the rate at which the constants in \eqref{E:bound unweighted} blow up as the type degenerates.  

Our goal is to quantify and counteract the failure of \eqref{E:bound unweighted} in such situations by replacing $M_0$ by an appropriately weighted operator, for which we will establish near-optimal Lebesgue space bounds.  The exponents (though not the implicit constants) in these bounds will be independent of the choice of $\pi_1,\ldots,\pi_k$ and the cutoff function $a$.  Further, the weights we employ transform naturally under changes of coordinates, so they may reasonably be viewed as generalizations of the affine arclength measure on curves in $\R^d$.  A number of recent articles (such as \cite{BOS1, DLW, DM11, DSjfa, DW, DrM2, Oberlin, OberlinJFA10, OberlinMich, Sjolin, BSjfa}) have been devoted to establishing uniform estimates for operators weighted by affine arclength measure, and these results provide much of the motivation for this article.

\subsection{A motivating example}

Stating the main results of this article, or even the results of \cite{TW, BSrevista} requires some notation, so we postpone this until the next section.  By way of background and motivation, we will spend the remainder of the introduction describing a concrete case about which much is known, and which provides the inspiration for the more general operators considered in this article.  Let $\gamma:\R \to \R^d$ be a smooth curve and $a$ a continuous cutoff function.  Consider the operator
$$
T_0f(x) := \int_\R f(x-\gamma(t))\, a(t)\, dt, \qquad f \in C^0_0(\R^d).
$$
By duality, $T_0:L^{p}(\R^d) \to L^{q}(\R^d)$ if and only if for all $f \in L^p(\R^d)$ and $g \in L^q(\R^d)$, 
$$
\left|\int_{\R^d} \int_\R f(x-\gamma(t)) g(x) \, a(t)\, dt\right| \leq C(\gamma,p,q) \|f\|_{L^p(\R^d)}\|g\|_{L^{q'}(\R^d)};
$$
this may be compared with \eqref{E:bound unweighted}.  

The curve $\gamma$ is said to be of type (at most) $N$ when $\det(\gamma'(t),\ldots,\gamma^{(d)}(t))$ vanishes to order at most $N$ at any point.  The results of \cite{DSjfa2} imply that if $\gamma$ is of type $N$ on the support of $a$, $\|T_0\|_{L^p \to L^q} <\infty$ if $(p^{-1},q^{-1})$ lies in the trapezoid with vertices
\begin{equation} \label{E:vertices}
(0,0), \quad (1,1), \quad (p_N^{-1}, q_N^{-1}) :=  (\tfrac{d}{N+\frac{d(d+1)}2}, \tfrac {d-1}{N+\frac{d(d+1)}2}), \quad (1-q_N^{-1},1-p_N^{-1}).
\end{equation}
(The non-endpoint result was due to Tao--Wright in \cite{TW}.)  
Further, if $N$ is the maximal type of $T_0$ on $\{t:a(t) \neq 0\}$, this is sharp.  If $\gamma$ is not of finite type, $T_0$ satisfies no $L^p(\R^d) \to L^q(\R^d)$ estimates off the line $\{p=q\}$.

It was first noticed in \cite{Sjolin} and \cite{DM85} that affine, as opposed to Euclidean, arclength has a uniformizing effect on the bounds for convolution and Fourier restriction operators associated to possibly degenerate curves.  It is now known that if $\gamma$ is a polynomial curve, convolution with affine arclength measure on $\gamma$, which is the operator
$$
Tf(x) := \int_\R f(x-\gamma(t))\, |\det(\gamma'(t),\ldots,\gamma^{(d)}(t))|^{\frac2{d(d+1)}}\, dt,
$$
maps $L^p(\R^d)$ boundedly into $L^q(\R^d)$ if and only if (provided $T \not \equiv 0$) $(p^{-1},q^{-1})$ lies on the line segment joining $(p_0^{-1},q_0^{-1}), (1-q_0^{-1},1-p_0^{-1})$, with $p_0,q_0$ defined as above (\cite{Oberlin, DLW, BSjfa}).  Further, the operator norms established in \cite{Oberlin, DLW, BSjfa} depend only on the degree of the polynomial; for this, it is crucial that the affine arclength transforms nicely under reparametrizations and affine transformations.  Further investigations have been carried out by Oberlin and Dendrinos--Stovall in the non-polynomial case in \cite{OberlinJFA10, DSjfa2}.  The above mentioned results are essentially optimal, both in terms of the exponents involved and in terms of pointwise estimates on the weight, \cite{OberlinMich} (cf.\ Proposition~\ref{P:optimality}).  Analogous results are also known for the restricted X-ray transform, \cite{DSjfa, DSjfa2}.  There have also been a number of recent articles aimed at establishing uniform estimates for Fourier restriction to curves with affine arclength measure, for instance \cite{BOS1, DM11, DW, BSajm}.  

Our goal in this article is to address the gap between the general results of \cite{TW, BSrevista} and the type-independent results of \cite{DLW, DSjfa,Oberlin,  BSjfa} by introducing a generalization of the affine arclength measure, well-suited to \eqref{E:unweighted}.  We will also prove near-endpoint bounds for the weighted operator and, in particular, will generalize the results of \cite{TW, BSrevista} to the case when the $\pi_i$ completely fail to be of finite type on the support of $a$.  Some of our results are new even in the translation invariant case.


\section{Basic notions and statements of the main results}


\subsection*{Notation}
Throughout the article, we will use the now-standard notation $A \lesssim B$ to mean that $A \leq C B$ for some innocuous implicit constant $C$.  The value of this constant will be allowed to change from line to line.  The meaning of `innocuous' will be specified at the beginning of most sections, though in this section it will be specified \textit{in situ} and in the next, it does not arise.  Additionally, $A \gtrsim B$ if $B \lesssim A$, and $A \sim B$ if $A \lesssim B$ and $B \lesssim A$.  We denote the nonnegative integers by $\Z_0$.   If $\ell$ is any integer, $\delta$ is an $\ell$-tuple of real numbers, and $\beta \in \Z_0^\ell$ is a multiindex, we denote by $\delta^\beta$ the quantity $\delta_1^{\beta_1} \cdot \ldots \cdot \delta_\ell^{\beta_\ell}$.  

We will also use some less-standard notation.  We consider the partial order $\preceq$ on $\Z_0^k$ defined by $b_1 \preceq b_2$ if $b_1^i \leq b_2^i$, $1 \leq i \leq k$.  We say $b_1 \prec b_2$ if at least one of these inequalities is strict.  If $\scriptB \subseteq \Z_0^k$, is any set, we define a polytope
$$
\scriptP(\scriptB) := \ch \bigcup_{b \in \scriptB} ([0,\infty)^k + \{b\}),
$$
where `$\ch$' denotes the convex hull.  

Fix a dimension $d$ and an integer $k \geq 2$; $k$ may exceed $d$.  We will consider vector fields $X_1,\ldots,X_k$, defined and smooth on the closure of an open set $U$.  A word $w$ is an element of $\scriptW := \bigcup_{n=1}^\infty\{1,\ldots,k\}^n$.  To each word is associated a vector field $X_w$, defined recursively by $X_{(i)} := X_i$, $1 \leq i \leq k$ and $X_{(w,i)} := [X_w,X_i]$, for $w \in \scriptW$ and $1 \leq i \leq k$.  The degree of $w \in \scriptW$ is the $k$-tuple, $\deg w$, whose $i$-th entry is the number of occurrences of $i$ in $w$.  

All brackets of such vector fields lie in the span of the $X_w$:  if $w,w' \in \scriptW$, 
\begin{equation} \label{E:Jacobi}
[X_w,X_{w'}] = \sum_{\deg \tilde w = \deg w + \deg w'} C_{w,w'}^{\tilde w} X_{\tilde w},
\end{equation}
where $C_{w,w'}^{\tilde w}$ is an integer.  Indeed, by the Jacobi identity,
$$
[X_w,[X_{w'},X_i]] = [[X_w,X_{w'}],X_i] - [X_{(w,i)},X_{w'}],
$$
and so \eqref{E:Jacobi} is easily obtained by inducting on $\|\deg w'\|_{\ell^1}$.  (This was observed in \cite{Hormander}.)  We note that for each $b \in \N^k$, there are only finitely many words $w$ with $\deg w = b$, so the sum in \eqref{E:Jacobi} is finite.  

If $I = (w_1,\ldots,w_d)$ is a $d$-tuple of words, we define $\deg I := \sum_{i=1}^d \deg w_i$ and 
$$
\lambda_I := \det(X_{w_1},\ldots,X_{w_d}).
$$
The Newton polytope of the vector fields $X_1,\ldots,X_k$ at the point $x_0 \in U$ is defined to be
$$
\scriptP_{x_0} := \scriptP(\{\deg I : \ctc{$I$ is a $d$-tuple of words satisfying} \lambda_I(x_0) \neq 0\}),
$$
and we define the Newton polytope of a set $A \subseteq U$ to be
$$
\scriptP_A := \ch(\bigcup_{x \in A} \scriptP_x).
$$

The H\"ormander condition is the statement that $\scriptP_{x_0} \neq \emptyset$ for each $x_0 \in U$.  When the $X_i$ are nonvanishing vector fields tangent to the fibers of the $\pi_i$, this is the finite type hypothesis in \cite{TW, BSrevista}.  

\subsection*{Results}

Let $U \subseteq \R^d$ be an open set and let $\pi_1,\ldots,\pi_k : \overline U \to \R^{d-1}$ be smooth submersions (i.e.\ having surjective differentials).  Letting $\star$ denote the composition of the Hodge-star operator, which maps $(d-1)$-forms to one-forms, with the natural identification of one-forms with vectors via the Euclidean metric, we define vector fields
\begin{equation} \label{E:def X}
X_j := \star(d\pi_j^1 \wedge \cdots \wedge d\pi_j^{d-1}), \qquad 1 \leq j \leq k.
\end{equation}
Let $a$ be a continuous function with compact support contained in $U$. 

Fix a $d$-tuple of words $I_0 = (w_1,\ldots,w_d)$ and define the generalized affine arclength
\begin{equation} \label{E:def rho}
\rho = \rho_{I_0} := |\det(X_{w_1},\ldots,X_{w_d})|^{\frac1{|\deg I_0|_1-1}},
\end{equation}
where $|b|_1$ denotes the $\ell_1$ norm.  Define a $k$-linear form $M:[C^0(\R^d)]^k\to \C$ by 
\begin{equation} \label{E:def M}
M(f_1,\ldots,f_k) := \int_{\R^d} \prod_{j=1}^k f_j \circ \pi_j(x) \, \rho(x)\, a(x)\, dx.
\end{equation}
For $b \in \R^k$ with $|b|_1 >1$, define 
\begin{equation} \label{E:q(b)}
\mathbf q(b) := \tfrac b{|b|_1-1}.
\end{equation}
It is easy to check that $\mathbf q$ equals its own inverse. The following is our main theorem.

\begin{theorem} \label{T:main}
Assume that $\deg I_0$ is an extreme point of $\scriptP_{\supp a}$.  Then for all $\mathbf{p} \in [1,\infty]^k$ satisfying $(p_1^{-1},\ldots,p_k^{-1}) \preceq \mathbf q(b)$ and $p_j^{-1}<q_j(b)$ when $(\deg I_0)_j \neq 0$, we have the estimate
\begin{equation} \label{E:main bound}
|M(f_1,\ldots,f_k)| \lesssim \prod_{j=1}^k \|f_j\|_{L^{p_j}(\R^{d-1})},
\end{equation}
for all continuous $f_1,\ldots,f_k$.  The implicit constant depends on the $\pi_j$, $a$, $\mathbf p$ and $b_0$, but not on the $f_j$.  Thus $M$ extends to a bounded $k$-linear form on $\prod_{j=1}^k L^{p_j}(\R^{d-1})$.  
\end{theorem}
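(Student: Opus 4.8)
The plan is to strip \eqref{E:main bound} of its soft content — the choice of $\mathbf p$ and the passage to all of $L^{p_j}$ — and reduce it to a family of restricted weak-type bounds approaching the vertex $\mathbf q(b_0)$, where I abbreviate $b_0:=\deg I_0$; these are then obtained by localizing to the set where $|\lambda_{I_0}|$ has a fixed dyadic size, invoking the finite-type estimates of \cite{TW,BSrevista} there, and summing.

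\textbf{Reductions.} Since $\rho a$ is continuous with compact support, the sets $K_j:=\pi_j(\supp(\rho a))\subseteq\R^{d-1}$ are compact and $M(f_1,\dots,f_k)=M(f_1\chi_{K_1},\dots,f_k\chi_{K_k})$; as each $K_j$ has finite measure, $\|g\chi_{K_j}\|_{L^{p_j}}\lesssim\|g\chi_{K_j}\|_{L^{p'_j}}$ whenever $p_j\le p'_j$, so \eqref{E:main bound} for a given $\mathbf p$ automatically holds for every coordinatewise-larger tuple. Writing $s_j:=p_j^{-1}$ and $Z:=\{j:(b_0)_j=0\}$, the admissible $\mathbf s$ form the union over $\theta\in(0,1)$ of the boxes $\theta B$, where $B:=\{\mathbf s:s_j=0\ (j\in Z),\ 0\le s_j\le q_j(b_0)\ (j\notin Z)\}$, and each such $\mathbf s$ is coordinatewise dominated by a point of the relative interior of $\theta B$ for suitable $\theta<1$; so it suffices to prove \eqref{E:main bound} on the relative interior of $\theta B$ for every $\theta<1$. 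By multilinear real interpolation this follows from restricted weak-type bounds at the vertices of $\theta B$, each of which is $\preceq\theta\mathbf q(b_0)$; using $|E_j\cap K_j|\le|K_j|$ once more, all of these reduce to the single family
\[
\int_{\R^d}\prod_{j=1}^k\bigl(\chi_{E_j}\circ\pi_j\bigr)\,\rho\,|a|\,dx\ \lesssim_\theta\ \prod_{j=1}^k|E_j|^{\theta q_j(b_0)},\qquad 0<\theta<1,
\]
with $q_j(b_0)=(b_0)_j/(|b_0|_1-1)$. This is the crux.

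\textbf{The core estimate.} Take a smooth partition of unity $1=\sum_{\nu}\psi_\nu$ on $\{\lambda_{I_0}\ne0\}$ subordinate to the dyadic bands $\{|\lambda_{I_0}|\sim2^{-\nu}\}$; the zero set of $\lambda_{I_0}$ carries no $\rho$-mass, so $M(f_1,\dots,f_k)=\sum_\nu M_\nu(f_1,\dots,f_k)$, where $M_\nu$ has cutoff $\psi_\nu a$. On $\supp(\psi_\nu a)$ one has $\rho\sim2^{-\nu/(|b_0|_1-1)}$ by \eqref{E:def rho}, and, since $|\lambda_{I_0}|$ is bounded away from $0$ there, the $\pi_j$ are of finite type; moreover $b_0$ is a vertex of the Newton polytope $\scriptP_{\supp(\psi_\nu a)}$, as it is a vertex of the larger polytope $\scriptP_{\supp a}$ and lies in $\scriptP_{\supp(\psi_\nu a)}$. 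Hence the finite-type estimates of \cite{TW,BSrevista}, applied to the unweighted form with cutoff $\psi_\nu a$ and interpolated against the trivial $L^\infty\times\dots\times L^\infty$ bound, yield restricted weak type at every exponent strictly below $\mathbf q(b_0)$, with a constant $C_\nu$ measuring how those estimates degenerate as $|\lambda_{I_0}|\downarrow0$ on $\supp(\psi_\nu a)$. Combining with the factor $2^{-\nu/(|b_0|_1-1)}$ coming from $\rho$, the $\nu$-th piece of the left side above is $\lesssim_\theta 2^{-\nu/(|b_0|_1-1)}C_\nu\prod_j|E_j|^{\theta q_j(b_0)}$, so the estimate follows once $\sum_\nu2^{-\nu/(|b_0|_1-1)}C_\nu<\infty$. (Alternatively the core estimate may be proved directly by a Christ--Tao--Wright iteration along the flows of $X_{w_1},\dots,X_{w_d}$, keeping track of the weight as in the affine-arclength literature and invoking the Jacobi relation \eqref{E:Jacobi} to handle the higher brackets produced along the way; the role of extremality is the same.)

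\textbf{Main obstacle.} The crux is the summability $\sum_\nu2^{-\nu/(|b_0|_1-1)}C_\nu<\infty$: one must show that the finite-type constant of \cite{TW,BSrevista} on $\supp(\psi_\nu a)$ blows up no faster than a fixed power of $2^\nu$ and that, after interpolation with the $L^\infty$ bound to reach the fixed exponent, this power is strictly less than $1/(|b_0|_1-1)$, so it is overcome by the decay of $\rho$. This is exactly where the hypothesis is used at full strength: because $b_0$ is a \emph{vertex} of $\scriptP_{\supp a}$, rather than merely a boundary point, the other generators of the Newton polytope enter only at strictly larger homogeneity, which is what both singles out the exponent $|b_0|_1-1$ in \eqref{E:def rho} and makes the series converge. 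Consistently with the sharpness discussed in the introduction, the argument — and the estimate itself — must fail when $b_0$ is not a vertex. Making the decomposition quantitative, i.e.\ producing the polytopes $\scriptP_{\supp(\psi_\nu a)}$ and the constants $C_\nu$ with effective, uniform dependence on $\nu$, is where the substantive analytic work lies.
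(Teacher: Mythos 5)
Your reductions at the start are fine and run parallel to the paper: both pass to restricted weak-type bounds at exponents strictly dominated by $\mathbf q(b_0)$, and the passage from there to the full statement via H\"older (using compactness of $\pi_j(\supp a)$) and real interpolation is standard. The problem is the core estimate.

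The genuine gap is in your summability step $\sum_\nu 2^{-\nu/(|b_0|_1-1)}C_\nu<\infty$. You propose to invoke the unweighted finite-type bounds of \cite{TW,BSrevista} on each dyadic slab $\{|\lambda_{I_0}|\sim 2^{-\nu}\}$ and then control the growth of the resulting constants $C_\nu$. But the paper explicitly points out in the introduction that \emph{this is precisely what is not known}: ``It is not, however, known in general what happens when the type degenerates at some point where $a\neq 0$ (for instance, on the boundary of the support) or the rate at which the constants in \eqref{E:bound unweighted} blow up as the type degenerates.'' The constants in \cite{TW,BSrevista} depend on the vector fields in a way that is not effective in the lower bound for $|\lambda_{I_0}|$, and there are further uncontrolled quantities entering that argument (e.g.\ the sizes of higher brackets, compactness constants) that need not be uniform across the slabs $\supp(\psi_\nu a)$. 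In other words, your approach reduces Theorem~\ref{T:main} to exactly the open quantitative question that motivates the paper, so as written there is a missing idea and the argument does not close.

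The route the paper actually takes is the one you mention only parenthetically: a direct Christ--Tao--Wright-style iteration in which the weight is carried through the band structure from the start, so that no dyadic summation over $|\lambda_{I_0}|$ ever appears. Concretely, the paper first replaces the determinant weight $\rho_{I_0}=|\lambda_{I_0}|^{1/(|b_0|_1-1)}$ by the Jacobian-derivative weight $\tilde\rho$ in \eqref{E:def tilde rho}, which requires the uniform equivalence of Newton polytopes in Proposition~\ref{P:equivalence}; then proves the more general Theorem~\ref{T:not extreme} by the method of refinements (Lemmas~\ref{L:central refinement}--\ref{L:Poly like}), building a box $F_d$ of flows on which $\det D\Psi^{J_0}_{x_0}$ has a uniform polynomial-type lower bound determined by $\tilde\rho$ itself, and applying the Christ comparison lemma to push forward Lebesgue measure. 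The single pigeonholing in the dyadic value $\rho_0$ of the weight (just before \eqref{E:on Omega}) costs only an $\alpha_1^\eps$-factor precisely because the weight appears with the correct homogeneity $|b_0|_1-1$, which is the use of extremality; no summation over unbounded $\nu$ is ever required. You would need to develop that alternative in full to have a proof.
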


The extremality hypothesis seems natural by analogy with the translation invariant case; it also leads to certain invariants of the weight, as we will discuss below.  However, we ultimately prove a more general result, Theorem~\ref{T:not extreme}, which does not require extremality.  (We postpone stating the latter because it requires more notation.)

With the given weight, the above theorem is nearly sharp.  Indeed, under the hypotheses and notation above, we have the following.
     
\begin{proposition}\label{P:optimality}
Let $\mu$ be a nonnegative Borel measure whose support is contained in $U$, and assume that the bound
\begin{equation} \label{E:M with mu}
M_\mu(\chi_{E_1},\ldots,\chi_{E_k}) := \int_{\R^d} \prod_{j=1}^k \chi_{E_j} \circ \pi_j \, d\mu \leq A(\mu) \prod_{j=1}^k |E_j|^{\frac1{p_j}}
\end{equation}
holds for all Borel sets $E_1,\ldots,E_k \subseteq\R^{d-1}$ and some constant $A(\mu) < \infty$.  If $\mu \not \equiv 0$, $(p_1,\ldots,p_k) \in [1,\infty]^k$.  If $\sum_j p_j^{-1} > 1$, let $b_p := \mathbf q(p_1^{-1},\ldots,p_k^{-1})$.  Then $\mu(\{x : b_p \notin \scriptP_x\}) = 0$.  If in addition, $b_p$ is an extreme point of $\scriptP_{\supp \mu}$, $\mu$ is absolutely continuous with respect to Lebesgue measure, and its Radon--Nikodym derivative satisfies
\begin{equation} \label{E:mu lesssim rho}
\tfrac{d\mu}{dx} \lesssim A(\mu) \sum_{\deg I = b_p} |\lambda_I|^{\frac1{|b_p|_1-1}}.
\end{equation}
The implicit constant in \eqref{E:mu lesssim rho} may be chosen to depend only on $d,p$; $A(\mu)$ has the same value in \eqref{E:M with mu} and \eqref{E:mu lesssim rho}.  
\end{proposition}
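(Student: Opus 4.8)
We prove the three assertions in turn; the last two come from one family of test sets. We may assume $\mu$ is compactly supported in $U$, since restricting $\mu$ to the members of a compact exhaustion of $U$ does not increase $A(\mu)$, and the conclusions for the restrictions give those for $\mu$. To see that $p_j\in[1,\infty]$, fix $j$ and a large cube $Q\subseteq\R^{d-1}$, tile $Q$ by $\sim\delta^{1-d}$ cubes $R$ of side $\delta$, insert $\chi_R$ in the $j$th slot of \eqref{E:M with mu} and fixed cubes $Q_i$ in the others, and sum over $R$: the sets $\pi_j^{-1}(R)$ are disjoint, so the left sides add to $M_\mu$ with $\chi_Q$ in slot $j$, while the right sides add to $\lesssim A(\mu)\,\delta^{(d-1)(p_j^{-1}-1)}\prod_{i\ne j}|Q_i|^{1/p_i}$. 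If $p_j<1$ the power of $\delta$ is positive, so letting $\delta\to0$ and then $Q,Q_i\uparrow\R^{d-1}$ forces $\mu\equiv0$; hence $p_j\ge1$, and $p_j\le\infty$ is trivial.

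The core device is the family of flow-boxes: for $x\in U$ and $\delta\in(0,\eps_0]^k$, let $B_x(\delta)$ be the set of points reached from $x$ by finitely many flows $\exp(\pm\tau X_i)$ with total $X_i$-time at most $\delta_i$. By the by-now-standard theory of balls defined by vector fields (H\"ormander, Nagel--Stein--Wainger, Christ, Tao--Wright), $|B_x(\delta)|\sim\sum_I|\lambda_I(x)|\,\delta^{\deg I}$ uniformly for $x$ in a compact subset of $U$; moreover, since $X_j=X_{(j)}$ is tangent to the fibres of $\pi_j$, combining the $\delta^{\deg w}$-extent of $B_x(\delta)$ along each $X_w$ with the Hodge-star identity $|d\pi_j(X_{w_1})\wedge\cdots\wedge d\pi_j(X_{w_{d-1}})|\sim|\lambda_{((j),w_1,\dots,w_{d-1})}|$ gives $|\pi_j(B_x(\delta))|\lesssim\sum_{I:\,w_1=(j)}|\lambda_I(x)|\,\delta^{\deg I-e_j}$, the sum over $d$-tuples $I=(w_1,\dots,w_d)$ with first word $(j)$. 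Testing \eqref{E:M with mu} on $E_i:=\pi_i(B_x(\delta))$ therefore yields, for all such $x$ and $\delta$,
\begin{equation}\label{E:boxtest}
\mu\bigl(B_x(\delta)\bigr)\ \lesssim\ A(\mu)\prod_{j=1}^{k}\Bigl(\ \sum_{I:\,w_1=(j)}|\lambda_I(x)|\,\delta^{\deg I-e_j}\Bigr)^{1/p_j}.
\end{equation}
Two arithmetic facts will be used repeatedly: since $p_j^{-1}=(b_p)_j/(|b_p|_1-1)$, one has $\sum_j p_j^{-1}(\xi\cdot b_p-\xi_j)=\xi\cdot b_p$ for every $\xi\in\R^k$, and $\sum_j p_j^{-1}-1=(|b_p|_1-1)^{-1}$. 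Finally, $\mu\not\equiv0$ forces $|b_p|_1\ge d$: since the $\pi_j$ are Lipschitz, testing \eqref{E:M with mu} on $Cr$-balls about $\pi_j(x)$ gives $\mu(B(x,r))\lesssim A(\mu)r^{(d-1)|b_p|_1/(|b_p|_1-1)}$, which would beat $r^d$ and hence force $\mu\equiv0$ if $|b_p|_1<d$.

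For the support statement, let $S:=\{x:b_p\notin\scriptP_x\}$, relatively closed in $U$, and write $S=\bigcup S_\xi$ over rational $\xi$ with positive entries, where $S_\xi:=\{x:\lambda_I(x)=0\text{ whenever }\xi\cdot\deg I\le\xi\cdot b_p\}$ is closed; this exhaustion uses that $\scriptP_x$ has recession cone $[0,\infty)^k$, so every point outside it can be strictly separated by some $\xi$ with positive entries. Fix such a $\xi$ and set $\delta_i(t)=t^{\xi_i}$. On $S_\xi$ every non-vanishing $\lambda_I(x)$ has $\xi\cdot\deg I\ge\xi\cdot b_p+\sigma$ for a fixed $\sigma=\sigma(\xi,b_p)>0$ (the $\xi\cdot\deg I$ form a discrete set), so \eqref{E:boxtest} and the first arithmetic fact give $\mu(B_x(\delta(t)))\lesssim A(\mu)\,t^{\xi\cdot b_p+\sigma}$ uniformly on a compact $K\subseteq S_\xi$. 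Because $|b_p|_1\ge d$ we have $\xi\cdot b_p\ge d\min_i\xi_i$, so this power of $t$ beats the $\sim t^{-d\min_i\xi_i}$ count of boxes $B_{x_l}(\delta(t))$ needed to cover $K$; a covering argument for this family then gives $\mu(K)\lesssim A(\mu)t^{\sigma'}\to0$ for some $\sigma'>0$, so $\mu(S)=0$. For the density bound, note that if $b_p$ is extreme in $\scriptP_{\supp\mu}$ it is a $\preceq$-minimal vertex, so the normal cone of $\scriptP_{\supp\mu}$ at $b_p$ contains a vector $\eta$ with positive entries: $\eta\cdot\deg I\ge\eta\cdot b_p$ whenever $\lambda_I$ does not vanish identically on $\supp\mu$, with equality only if $\deg I=b_p$. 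Minimality and $\mu(S)=0$ also give, for $\mu$-a.e.\ $x$, some $I$ with $\deg I=b_p$ and $\lambda_I(x)\ne0$, i.e.\ $W(x):=\sum_{\deg I=b_p}|\lambda_I(x)|>0$ $\mu$-a.e. With $\delta_i(t)=t^{\eta_i}$, the two estimates of the previous paragraph give $|B_x(\delta(t))|\sim W(x)t^{\eta\cdot b_p}$ and, using the arithmetic facts, $\prod_j|\pi_j(B_x(\delta(t)))|^{1/p_j}\lesssim_{d,p}W(x)^{|b_p|_1/(|b_p|_1-1)}t^{\eta\cdot b_p}$ as $t\to0$, whence \eqref{E:boxtest} yields
\[
\frac{\mu(B_x(\delta(t)))}{|B_x(\delta(t))|}\ \lesssim\ A(\mu)\,W(x)^{\frac1{|b_p|_1-1}}\ \lesssim_{d,p}\ A(\mu)\sum_{\deg I=b_p}|\lambda_I(x)|^{\frac1{|b_p|_1-1}}
\]
for small $t$, the last step by Jensen's inequality and the finite number of such $I$. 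Since $W$ is continuous, a Vitali covering of open sets by these boxes converts this into $\mu(E)\lesssim_{d,p}A(\mu)\int_E\sum_{\deg I=b_p}|\lambda_I|^{1/(|b_p|_1-1)}\,dx$ for every Borel $E$, which is exactly absolute continuity together with \eqref{E:mu lesssim rho}.

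The genuine difficulty lies in the covering and differentiation steps: the boxes $B_x(\delta)$ are strongly anisotropic, and where the $X_w(x)$ fail to span they degenerate to Lebesgue-null sets, so the selection lemmas (and the space-of-homogeneous-type structure underlying the differentiation) must be set up with care and their constants tracked uniformly up to $\supp\mu$; the volume and projection estimates for $B_x(\delta)$ themselves are routine, but it is their uniformity in $x$ that makes the argument go.
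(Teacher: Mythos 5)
Your proof follows the same broad strategy as the paper (Carnot--Carath\'eodory boxes, testing \eqref{E:M with mu} on projections of boxes, and a covering argument), and your treatment of $p_j \ge 1$ matches the paper's. However, there are two genuine gaps in the core of the argument.

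\emph{The augmentation trick is missing, and without it the box volume formula fails.} You invoke $|B_x(\delta)|\sim\sum_I|\lambda_I(x)|\,\delta^{\deg I}$ ``uniformly for $x$ in a compact subset of $U$''. But this Nagel--Stein--Wainger-type formula needs some $\lambda_I$ to be nonvanishing on that compact set, i.e.\ the H\"ormander condition. The proposition makes no such assumption --- indeed the set $Z=\{x:b_p\notin\scriptP_x\}$ that you must show is $\mu$-null includes points where $\scriptP_x=\emptyset$, which is exactly where H\"ormander fails. There your box volume formula degenerates, and the covering count ``$\sim t^{-d\min_i\xi_i}$ boxes'' is no longer valid: a degenerate box need not cover any Euclidean neighborhood of its center, so the boxes cannot be arranged to cover $K$ with that cardinality. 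The paper's fix is to replace $\{X_1,\dots,X_k\}$ by $\{\partial_1,\dots,\partial_d,X_1,\dots,X_k\}$ and $(p_1,\dots,p_k)$ by $(\infty,\dots,\infty,p_1,\dots,p_k)$; this guarantees the vector fields always span (so the volume/doubling/projection estimates hold uniformly, by Street's theorem), while it is verified that the sets $Z$, $\Omega$, and the quantity on the right of \eqref{E:mu lesssim rho} are unchanged. Your argument cannot run without some version of this step.

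\emph{The implicit constant in \eqref{E:mu lesssim rho} must depend only on $d,p$, and your choice of weight vector does not deliver that.} In your density bound you select $\eta$ from the normal cone of $\scriptP_{\supp\mu}$ at the extreme point $b_p$; that $\eta$ depends on the shape of $\scriptP_{\supp\mu}$, hence on the $\pi_j$. The resulting constant therefore depends on the maps and not merely on $d,p$. The paper's Proposition~\ref{P:polytope prop} is precisely what is needed here: it furnishes, for each fixed $b_p$, an \emph{admissible} $(v,\eps)$ --- drawn from a finite list depending only on $b_p$ and $d$ --- and the whole argument (choice of $\scriptW_0$, the smallness threshold $\delta_0$, the doubling constants from Street's theorem) is run with these admissible parameters. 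This is the combinatorial content that makes the uniformity claim in the proposition provable, and your proposal does not supply a substitute. You flag the covering/differentiation uniformity as ``the genuine difficulty'' but leave it unresolved; that remark is accurate, and the admissibility machinery is what resolves it.

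A smaller point: your projection bound $|\pi_j(B_x(\delta))|\lesssim\sum_{I:w_1=(j)}|\lambda_I(x)|\delta^{\deg I-e_j}$ is plausible but nontrivial; the paper instead derives $|\pi_i(B(x,\delta))|\lesssim\delta^{-v^i}|B(x,\delta)|$ directly from the coarea formula, the section map, and the doubling property, which is cleaner and avoids a separate case analysis over the first word of $I$.
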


In the translation invariant case, a similar result is due to D.~Oberlin in \cite{OberlinMich} (cf.\ \cite{DSjfa} for the restricted X-ray transform).  The final statement in the proposition only applies in the endpoint case, which is not otherwise addressed in this article.  The endpoint version of Theorem~\ref{T:main} is known to fail without further assumptions on the $X_i$ than made here, as can be seen by considering the example of convolution with affine arclength on $\gamma(t) = (t,e^{-1/t}\sin(\tfrac1{t^k}))$, $t > 0$, for $k$ sufficiently large.  (This example is due to Sj\"olin in \cite{Sjolin}.)

The proofs of Theorem~\ref{T:main} and Proposition~\ref{P:optimality} will rely on a more general result about smooth vector fields $X_1,\ldots,X_k$ on $\R^d$.  To state this result, we need some additional terminology.  

Let $J \in \{1,\ldots,k\}^d$.  We define $\deg J$ to be the $k$-tuple whose $i$-th entry is the number of occurrences of $i$ in $J$.  If $\alpha \in \Z_0^d$ is a multi-index, we define $\deg_J \alpha$ to be the $k$-tuple whose $i$-th entry is $\sum_{\ell:J_{\ell}=i}\alpha_\ell$.  We define
\begin{equation} \label{E:def Psi}
\Psi^J_{x_0}(t_1,\ldots,t_d) := \exp(t_d X_{J_d}) \circ \cdots \circ \exp(t_1 X_{J_1})(x_0).
\end{equation}
We define another polytope, 
$$
\begin{aligned}
\widetilde \scriptP_{x_0} := \scriptP(\{\deg J + &\deg_J\alpha: \: J \in \{1,\ldots,k\}^d \ctc{and} \alpha \in (\Z_0)^d \\  &  \ctc{satisfy}  \partial_t^\alpha \det D \Psi_{x_0}^J(0) \neq 0\}).
\end{aligned}
$$

\begin{proposition} \label{P:equivalence} For each $x_0 \in U$, $\widetilde \scriptP_{x_0} = \scriptP_{x_0}$.  Furthermore, for each extreme point $b_0$ of $\scriptP_{x_0}$, 
\begin{equation} \label{E:main equivalence}
\sum_{\deg I = b_0} |\lambda_I(x_0)| \sim \sum_{J \in \{1,\ldots,k\}^d} \sum_{\underset{\deg J + \deg_J\alpha = b_0}{\alpha \in (\Z_0)^d:}} |\partial_t^\alpha \det D\Psi_{x_0}^J(0)|.
\end{equation}
The implicit constants may be taken to depend only on $d$ and $b_0$, and in particular, may be chosen to be independent of the $X_i$.  
\end{proposition}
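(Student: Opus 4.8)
The plan is to derive everything from two complementary ``Taylor expansion'' identities --- one writing each $\partial_t^\alpha \det D\Psi^J_{x_0}(0)$ in terms of the $\lambda_I(x_0)$, the other going the opposite way --- together with a convexity observation. Since $\scriptP(\scriptB) = \ch(\scriptB) + [0,\infty)^k$, if $b_0$ is an extreme point of $\scriptP_{x_0}$ then $\scriptP_{x_0}$ contains no $b \prec b_0$ (otherwise $2b_0 - b = b_0 + (b_0 - b) \in \scriptP_{x_0}$ and $b_0$ is the midpoint of the distinct points $b$, $2b_0 - b$). Since $\deg I'' \in \scriptP_{x_0}$ whenever $\lambda_{I''}(x_0) \neq 0$, this forces $\lambda_{I''}(x_0) = 0$ for every $I''$ with $\deg I'' \prec b_0$. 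Consequently, in any identity expanding $\partial_t^\alpha \det D\Psi^J_{x_0}(0)$ (for $\deg J + \deg_J \alpha = b_0$) as a linear combination with universal integer coefficients of the $\lambda_I(x_0)$ with $\deg I = b_0$, plus ``lower-order'' terms of the form $\phi(x_0)\,\lambda_{I''}(x_0)$ with $\phi$ smooth and $\deg I'' \preceq b_0$, $\deg I'' \neq b_0$, the lower-order terms vanish at $x_0$; and symmetrically for the reverse identity. Granting the two expansions, $\widetilde\scriptP_{x_0} \subseteq \scriptP_{x_0}$ and ``$\lesssim$'' in \eqref{E:main equivalence} follow from the forward one, $\scriptP_{x_0} \subseteq \widetilde\scriptP_{x_0}$ and ``$\gtrsim$'' from the reverse; all sums involved are finite since there are only finitely many words of any given degree, so the implicit constants depend only on $d$ and $b_0$.

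For the forward identity I would Taylor-expand $\det D\Psi^J_{x_0}$ at $t = 0$ directly. With $\Phi_j := \exp(t_d X_{J_d}) \circ \cdots \circ \exp(t_{j+1} X_{J_{j+1}})$ one has $\partial_{t_j} \Psi^J_{x_0}(t) = \bigl((\Phi_j)_* X_{J_j}\bigr)\bigl(\Psi^J_{x_0}(t)\bigr)$; expanding each pushforward via $(\exp(sX))_* Y = \sum_n \tfrac{(-s)^n}{n!}(\operatorname{ad}_X)^n Y$ and using $\operatorname{ad}_{X_i} X_w = -X_{(w,i)}$ --- so that an iterated $\operatorname{ad}$ of the $X_i$'s applied to a single $X_i$ is again, up to sign, a single $X_w$ --- yields $\partial_{t_j} \Psi^J_{x_0}(t) = \sum_{\vec n} \tfrac{t^{\vec n}}{\vec n!}\, X_{(J_j,\, J_{j+1}^{n_{j+1}},\, \dots,\, J_d^{n_d})}\bigl(\Psi^J_{x_0}(t)\bigr)$, the sum over $\vec n \in (\Z_0)^{d-j}$. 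Multilinearity of the determinant then writes $\det D\Psi^J_{x_0}(t)$ as a sum of terms $c\, t^\nu\, \lambda_I\bigl(\Psi^J_{x_0}(t)\bigr)$ with $\deg I = \deg J + \deg_J\nu$, and one final Taylor expansion --- using the elementary identity $\partial_t^\gamma\big|_{t=0}\, g\bigl(\Psi^J_{x_0}(t)\bigr) = (X_{J_1}^{\gamma_1} \cdots X_{J_d}^{\gamma_d}\, g)(x_0)$ --- reduces matters to understanding $X_{J_1}^{\gamma_1} \cdots X_{J_d}^{\gamma_d}\lambda_I$; the ``leading degree'' is then exactly $\deg J + \deg_J\alpha$ with $\alpha = \nu + \gamma$ the total $t$-degree, independently of how that degree is split between the two expansions.

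The remaining ingredient for the forward direction is a clean algebraic identity: from the Leibniz rule and $X_a(Z) = [X_a, Z] + (DX_a)Z$ (componentwise), together with the elementary fact that $\sum_\ell \det(v_1,\dots,Av_\ell,\dots,v_d) = \operatorname{tr}(A)\det(v_1,\dots,v_d)$ for any vectors $v_1,\dots,v_d$ and any matrix $A$ (applied with $A = DX_a$), one gets
\[
X_a(\lambda_I) \;=\; -\sum_{\ell} \lambda_{I^{(\ell,a)}} \;+\; \operatorname{div}(X_a)\, \lambda_I ,
\]
where $I^{(\ell,a)}$ denotes $I$ with its $\ell$-th word $w_\ell$ replaced by $(w_\ell, a)$, so $\deg I^{(\ell,a)} = \deg I + \deg(a)$. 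Iterating this (and its evident analogue with $X_a$ replaced by a word $X_w$, where \eqref{E:Jacobi} reduces $[X_w, X_{w_\ell}]$ to a combination of words) shows that $X_{J_1}^{\gamma_1} \cdots X_{J_d}^{\gamma_d}\lambda_I$ is a linear combination with universal integer coefficients of $\lambda_{I'}$ with $\deg I' = \deg I + \deg_J\gamma$, plus terms $\phi\,\lambda_{I''}$ with $\phi$ built from divergences of the $X_i$ and $\deg I'' \prec \deg I + \deg_J\gamma$ --- exactly the form required. Combined with the previous paragraph this completes the forward identity, hence $\widetilde\scriptP_{x_0} \subseteq \scriptP_{x_0}$ and the inequality ``$\lesssim$''.

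For the reverse identity I would argue by induction on $|\deg I|_1$: if a word $w_\ell = (v,i)$ of $I$ has length $\geq 2$, write $X_{w_\ell} = [X_v, X_i] = (DX_i)X_v - (DX_v)X_i$ and apply the Leibniz rule backwards to rewrite $\det(\dots, (DX_i)X_v, \dots)$ as $X_v(\lambda_{\tilde I}) - \sum_{\ell' \neq \ell}\det(\dots, X_v(X_{w_{\ell'}}), \dots)$, where $\tilde I$ is $I$ with $w_\ell$ shortened to $(i)$ (so $\deg \tilde I = \deg I - \deg v \prec \deg I$); expanding the remaining determinants via the identities of the previous paragraph and invoking the inductive hypothesis eventually expresses $\lambda_I(x_0)$ as a linear combination with universal integer coefficients of $\partial_t^\alpha \det D\Psi^J_{x_0}(0)$ with $\deg J + \deg_J\alpha = \deg I$, plus lower-order terms with $\deg J + \deg_J\alpha \prec \deg I$ that vanish when $\deg I$ is extreme (since $\widetilde\scriptP_{x_0} \subseteq \scriptP_{x_0}$ is by then available). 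Keeping this recursion well-founded --- in particular, controlling the $X_v(\cdot)$ terms, in which $X_v$ is a nontrivial bracket, and verifying that the universal linear system one obtains can really be solved for each $\lambda_I$ with constants depending only on $d$ and $b_0$ (equivalently, that the forward coefficient matrix restricted to degree $b_0$ is injective modulo the universal relations among the $\lambda_I$, such as those coming from the Jacobi identity) --- is the main obstacle. Once both identities are established, the two inclusions give $\widetilde\scriptP_{x_0} = \scriptP_{x_0}$, and combining the two bounds at an extreme point $b_0$ gives \eqref{E:main equivalence}, with all implicit constants depending only on $d$ and $b_0$.
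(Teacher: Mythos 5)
Your proposal takes a genuinely different route from the paper. The paper's proof in Section~4 does not attempt to invert an algebraic linear system relating the $\lambda_I$ to the $\partial_t^\alpha\det D\Psi^J_{x_0}(0)$. Instead it fixes an admissible separating weight $v_0$ (via Proposition~\ref{P:polytope prop}), builds a rescaling map $\Phi^\delta$ and pulled-back fields $Y_j^\delta = (\Phi^\delta)^*\delta^{v_0^j}X_j$, and invokes Theorem~5.3 of \cite{Street} to obtain uniform $C^m$ bounds and a uniform nondegeneracy bound for the $Y_j^\delta$ on a fixed ball (Lemma~\ref{L:Ys}). A compactness/contradiction argument via Arzel\`a--Ascoli (Lemma~\ref{L:holds for Yw}) then gives a two-sided admissible bound on $\max_J\|\det D\tilde\Psi^J\|_{C^0}$, and sending $\delta\searrow0$ (Lemma~\ref{L:dadetDPsiJ}) isolates the leading Taylor coefficients and produces \eqref{E:main equivalence}. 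The equality of polytopes then follows as in Lemma~\ref{L:one P0}.

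Your forward direction is essentially correct, and the author says as much in the remarks closing Section~4: using the $\exp(tX)$ approximation from \cite{CNSW} and the Lie-derivative-of-a-determinant identity, one obtains an expansion of $\partial_t^\alpha\det D\Psi^J_{x_0}(0)$ in the $\lambda_I$, and this ``gives an alternative proof that the right (Jacobian) side of \eqref{E:main equivalence} is bounded by the left (determinant) side.''

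The reverse direction is where your proposal has a genuine gap, one you yourself flag as ``the main obstacle'' but do not resolve. The paper's remarks address exactly this point: ``using this formula to bound the left of \eqref{E:main equivalence} by the right seems nontrivial,'' and ``the author has not been able to carry this out.'' Moreover, the remarks exhibit a counterexample showing the difficulty is real: with $\gamma(t) = (t,\dots,t^d)$, $X_0 = \partial_t$, $X_i = \partial_t - \gamma'(t)\cdot\nabla_x$, and $b = (1+\tfrac{d(d-1)}2,1,\dots,1)$ (minimal but non-extreme), the determinant side of \eqref{E:main equivalence} is a nonzero constant while the Jacobian side is identically zero. So the universal linear map sending $(\lambda_I)_{\deg I = b}$ to $(\partial_t^\alpha\det D\Psi^J_{x_0}(0))_{\deg J+\deg_J\alpha = b}$ has a nontrivial kernel at some degrees; showing that this kernel is contained in the span of the universal relations among the $\lambda_I$ precisely at extreme $b_0$ is exactly what your argument would need, and it is not supplied. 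There is a second unaddressed difficulty in your proposed recursion: rewriting $X_{w_\ell} = [X_v,X_i]$ produces terms $X_v(\lambda_{\tilde I})$ where $X_v$ is a \emph{nontrivial bracket}; such a derivative is not one of the $\partial_{t_j}$-directional derivatives that occur in $\det D\Psi^J_{x_0}$, so it is not clear how to absorb these into Jacobian data, and the induction on $|\deg I|_1$ does not close by bookkeeping alone. The paper's compactness argument is precisely what sidesteps these algebraic obstructions.
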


\subsection*{Examples}  We take a moment to discuss a few concrete cases where these results apply.

\emph{The translation-invariant case.}  Let $\gamma:\R \to \R^d$ be a smooth map and for $(t,x) \in \R^{1+d}$, define $\pi_1(t,x) = x$, $\pi_2(t,x) = x-\gamma(t)$.  Thus the unweighted operator $M_0$ in \eqref{E:unweighted} is essentially convolution with Euclidean arclength measure on $\gamma$, paired with a test function.  

Using the definition above, $X_1 = \partial_t$, $X_2 = \partial_t + \gamma' \cdot \nabla_x$.  If $w$ is any word of length $n \geq 2$ and if the first two letters of $w$ are 1 and 2, $X_w(t,x) = \gamma^{(n)}(t)$.  If $d \geq 2$, the H\"ormander condition is equivalent to the statement that the torsion of $\gamma$ does not vanish to infinite order at any point.  We note in particular that
\begin{align*}
|\det(X_1,X_2,X_{(1,2)},\ldots,X_{(1,\ldots,1,2)})| &= |\det(X_1,X_2,X_{(2,1)},\ldots,X_{(2,\ldots,2,1)})| \\ & = |\det(\gamma',\ldots,\gamma^{(d)})|,
\end{align*}
and if $U$ is any open set, the only extreme points of $\scriptP_U$ (unless $\scriptP_U$ is empty) are 
$$
\bigl(\tfrac{d(d-1)}2+1, d\bigr), \qquad \bigl(d, \tfrac{d(d-1)}2+1 \bigr).
$$
Thus the affine arclength in this case is defined in the usual way:
$$
\rho(t,x) = |\det(\gamma'(t), \ldots,\gamma^{(d)}(t))|^{\frac2{d(d+1)}}.
$$

By Theorem~\ref{T:main}, for any smooth $\gamma:\R \to \R^d$, and any continuous cutoff function $a$, the convolution operator
$$
Tf(x) = \int f(x-\gamma(t)) \, |\det(\gamma'(t),\ldots,\gamma^{(d)}(t)|^{\frac2{d(d+1)}}\, a(t)\, dt
$$
maps $L^p(\R^d)$ into $L^q(\R^d)$ whenever $(p^{-1},q^{-1})$ lies in the interior of the trapezoid with vertices as in \eqref{E:vertices} in the case $N=0$.  For general smooth curves this result is new, but, as mentioned in the introduction, even stronger results are known in some special cases.  

\emph{Restricted X-ray transforms.}  Let $\gamma:\R \to \R^{d-1}$ be a smooth map and for $(s,t,x) \in \R^{1+1+d-1}$, define $\pi_1(s,t,x) := (t,x)$, $\pi_2(s,t,x) := (s,x-s\gamma(t))$.  Then the operator $M_0$ in \eqref{E:unweighted} is the restricted X-ray transform
$$
Xf(t,x) = \int_\R f(s,x-s\gamma(t))\, a(s,t)\, ds,
$$
paired with a test function.  Using the above definition, 
$$
X_1 = \partial_s, \qquad X_2 = \partial_t + s\gamma'(t)\cdot \nabla_x.
$$
If $d \geq 3$, the only $d+1$-tuples of words $(w_1,\ldots,w_{d+1})$ with $\det(X_{w_1},\ldots,X_{w_{d+1}}) \not \equiv 0$ are, after reordering, those satisfying
$$
w_1 = 1, \qquad w_2 = 2, \qquad w_i = (1,2,\cdots,2), \quad 3 \leq i \leq d+1.
$$
Thus the only extreme point of the Newton polytope is $(d,1+\frac{d(d-1)}2)$, and 
$$
\rho(s,t,x) = |\det(\gamma'(t),\ldots,\gamma^{(d-1)}(t))|^{\frac2{d(d+1)}},
$$
which is a power of the usual affine arclength.  Theorem~\ref{T:main} thus gives a partial generalization of the results of \cite{DSjfa}, wherein a sharp strong type bound for the X-ray transform restricted to polynomial curves with affine arclength was established.

\emph{Generalized Loomis--Whitney.}  Let $\pi_1,\ldots,\pi_d:\R^d \to \R^{d-1}$ be smooth submersions.  The point $(1,\ldots,1)$ is always extreme or in the exterior of the Newton polytope, so for $\eps > 0$
$$
\left|\int_{\R^d} \prod_{i=1}^d f_i \circ \pi_i(x) \, |\det(X_1,\ldots,X_d)(x)|^{\frac1{d-1}}\, a(x) \, dx\right| \lesssim \prod_{i=1}^d \|f_i\|_{L^{d-1+\eps}(\R^{d-1})},
$$
with the implicit constant depending on the $\pi_i$ and $\eps$.
In the case when the $X_i$ do span at every point of the support of $a$, the endpoint estimate was proved in \cite{BCW}.  (The classical Loomis--Whitney inequality is the endpoint estimate when the $\pi_i$ are linear and $a \equiv 1$.)  
          
\subsection*{Outline} In Section~\ref{S:invariants}, we show that the weights we employ satisfy certain natural invariants; this makes them reasonable generalizations of the usual affine arclength measure.  In Section~\ref{S:equivalence}, we prove Proposition~\ref{P:equivalence} by employing the results of \cite{Street} and using a compactness argument.  We also use a combinatorial lemma, whose proof is postponed to the appendix.  In Section~\ref{S:optimality}, we prove the optimality result, Proposition~\ref{P:optimality}.  Finally, in Section~\ref{S:main}, we prove a more general result, Theorem~\ref{T:not extreme}, which implies Theorem~\ref{T:main}.  Our techniques for the proof of the main theorem are essentially those of \cite{ChRl, TW, BSrevista}, with some modifications to handle the potential failure of the H\"ormander condition.

\subsection*{Acknowledgements}  The idea for this project came from a conversation with Michael Christ, and Terence Tao provided many valuable suggestions during the early stages of this work.  The author is very grateful for these discussions.  She would also like to thank the anonymous referee for a detailed and enormously helpful report.  This project was supported in part by NSF DMS-0902667 and 1266336.  


\section{Invariants of the affine arclengths} \label{S:invariants}


Let $U$, $\pi_1,\ldots,\pi_k$, and $X_1,\ldots,X_k$ be as defined above.  For $1 \leq j \leq k$, let $V_j:=\pi_j(U)$.  Fix a $d$-tuple of words $I_0$, and assume that $b_0 := \deg I_0$ is minimal in the sense that if $\deg I' \prec \deg I_0$, $\lambda_I \equiv 0$.  (This minimality is essential.)  Define $\rho$ as in \eqref{E:def rho}.  

\begin{proposition} \label{P:diffeo}
Let $F:U \to \R^d$ and $G_j:V_j \to \R^{d-1}$, $1 \leq j \leq k$, be smooth maps.  Define $\tilde\pi_j := G_j \circ\pi_j \circ F$, $1 \leq j \leq k$, and let $\widetilde X_j$, $\tilde \rho$ be defined as in \eqref{E:def X}, \eqref{E:def rho}, with tildes inserted.  Then 
\begin{equation} \label{E:diffeo}
\tilde \rho = \bigl(\prod_{j=1}^k |(\det DG_j) \circ \pi_j|^{\mathbf q_j(b_0)}\bigr) |\det DF| \, \rho \circ F,
\end{equation}
where $\mathbf q$ is defined as in \eqref{E:q(b)}.
\end{proposition}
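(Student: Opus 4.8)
My plan is to verify \eqref{E:diffeo} by tracking how each factor in \eqref{E:def rho} transforms under the replacement of $\pi_j$ by $\tilde\pi_j=G_j\circ\pi_j\circ F$. Since \eqref{E:diffeo} is an identity of smooth functions it suffices to treat the case in which $F$ and each $G_j$ is a diffeomorphism onto its image --- the case relevant to changes of coordinates on $U$ and reparametrizations of the $\pi_j(U)$, and the only one in which the $\tilde\pi_j$ remain submersions. The computation then has three steps: (i) identify $\widetilde X_j$ in terms of $X_j$; (ii) propagate this to $\widetilde X_w$ for an arbitrary word $w$, keeping track of errors by their multidegree; (iii) expand $\widetilde\lambda_{I_0}=\det(\widetilde X_{w_1},\dots,\widetilde X_{w_d})$ and use the minimality of $b_0$ to discard the errors.

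\emph{Step (i).} Unwinding the Hodge star, \eqref{E:def X} says precisely that $X_j$ is characterized by $\iota_{X_j}(dx_1\wedge\cdots\wedge dx_d)=d\pi_j^1\wedge\cdots\wedge d\pi_j^{d-1}$. By the chain rule, $d\tilde\pi_j^i=\sum_\ell\bigl((\partial_\ell G_j^i)\circ\pi_j\circ F\bigr)\,F^*(d\pi_j^\ell)$, so wedging over $i$ produces a scalar factor $(\det DG_j)\circ\pi_j\circ F$ multiplying $F^*(d\pi_j^1\wedge\cdots\wedge d\pi_j^{d-1})=F^*(\iota_{X_j}(dx_1\wedge\cdots\wedge dx_d))$. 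Combining this with the naturality of interior multiplication under the diffeomorphism $F$ and with $F^*(dx_1\wedge\cdots\wedge dx_d)=(\det DF)\,dx_1\wedge\cdots\wedge dx_d$ gives
\[
\widetilde X_j=h_j\,Y_j,\qquad h_j:=\bigl((\det DG_j)\circ\pi_j\circ F\bigr)\det DF,\qquad Y_j:=(DF)^{-1}(X_j\circ F),
\]
where $Y_j$ is the vector field $F$-related to $X_j$, i.e.\ $DF\cdot Y_j=X_j\circ F$. (Here $(\det DG_j)\circ\pi_j\circ F$ is what \eqref{E:diffeo} writes as $(\det DG_j)\circ\pi_j$.)

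\emph{Step (ii).} The key lemma is that for every word $w$,
\[
\widetilde X_w=h^{\deg w}\,Y_w+\sum_{\deg v\prec\deg w}c_v\,Y_v,
\]
where $h^b:=\prod_{i=1}^k h_i^{b_i}$, the $Y_v$ are the word vector fields formed from $Y_1,\dots,Y_k$ exactly as the $X_v$ are formed from $X_1,\dots,X_k$, and the $c_v$ are smooth functions. I would prove this by induction on the length of $w$ using $[fA,gB]=fg[A,B]+f(Ag)B-g(Bf)A$: at the step $w'=(w,i)$ the leading parts combine to $h^{\deg w}h_i[Y_w,Y_i]=h^{\deg w'}Y_{w'}$, while each of the two correction terms --- and the bracket of each error term already present in $\widetilde X_w$ with $h_iY_i$ --- is a smooth multiple of some $Y_v$ whose degree is obtained from $\deg w'$ by decreasing at least one coordinate, hence satisfies $\deg v\prec\deg w'$. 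This is the step I expect to require the most care: the whole argument hinges on verifying that \emph{all} the error terms drop to \emph{strictly} smaller multidegree in the order $\preceq$, since that is exactly the feature the minimality of $b_0$ will exploit.

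\emph{Step (iii).} Expanding $\widetilde\lambda_{I_0}=\det(\widetilde X_{w_1},\dots,\widetilde X_{w_d})$ multilinearly in its columns and substituting Step (ii), the unique term built entirely from leading parts is $\bigl(\prod_{j=1}^d h^{\deg w_j}\bigr)\det(Y_{w_1},\dots,Y_{w_d})$. Since a bracket of $F$-related vector fields is again $F$-related, each $Y_{w_j}$ is $F$-related to $X_{w_j}$, so $Y_{w_j}=(DF)^{-1}(X_{w_j}\circ F)$ and $\det(Y_{w_1},\dots,Y_{w_d})=(\det DF)^{-1}(\lambda_{I_0}\circ F)$; also $\prod_{j=1}^d h^{\deg w_j}=h^{b_0}=(\det DF)^{|b_0|_1}\prod_{j=1}^k\bigl((\det DG_j)\circ\pi_j\circ F\bigr)^{(b_0)_j}$. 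Every remaining term of the expansion is, by the same reasoning, a smooth multiple of $\lambda_{I'}\circ F$ for some $d$-tuple of words $I'$ with $\deg I'\prec b_0$, and so vanishes identically by the minimality of $b_0=\deg I_0$. Hence
\[
\widetilde\lambda_{I_0}=(\det DF)^{|b_0|_1-1}\Bigl(\prod_{j=1}^k\bigl((\det DG_j)\circ\pi_j\circ F\bigr)^{(b_0)_j}\Bigr)(\lambda_{I_0}\circ F),
\]
and taking absolute values, raising to the power $1/(|b_0|_1-1)$, and recalling $\rho=|\lambda_{I_0}|^{1/(|b_0|_1-1)}$ together with $\mathbf q_j(b_0)=(b_0)_j/(|b_0|_1-1)$ gives \eqref{E:diffeo}.
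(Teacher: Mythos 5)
Your proof is correct and follows essentially the same approach as the paper: identify $\widetilde X_j$ as a scalar times the $F$-pullback of $X_j$, propagate this by induction on word length to get $\widetilde X_w = h^{\deg w}Y_w + (\text{strictly lower-degree errors})$, and expand the determinant using minimality of $b_0$ to annihilate the error terms. The only structural difference is that you handle the $F$- and $G_j$-transformations in a single pass (via the combined factor $h_j$), whereas the paper treats the two cases separately and then composes; both are fine.
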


In the notation above, let $a$ be a continuous, compactly supported function with $\supp a \subseteq U$, and define
$$
\widetilde M(f_1,\ldots,f_k) := \int_U \prod_{j=1}^k f_j \circ \tilde\pi_j(x)\, \tilde \rho(x) \, a\circ F(x)\, dx.
$$
Proposition~\ref{P:diffeo} implies that if each $G_j$ is equal to the identity and $F$ is one-to-one, then
$$
\widetilde M(f_1,\ldots,f_k) = M(f_1,\ldots,f_k).
$$
If we simply assume that $F$ and all of the $G_j$'s are one-to-one, the proposition implies that for $(p_1^{-1},\ldots,p_k^{-1}) := \mathbf q(b_0)$, 
$$
\sup_{f_1,\ldots,f_k \not \equiv 0} \frac{\widetilde M(f_1,\ldots,f_k)}{\prod_{j=1}^k \|f_j\|_{L^{p_j}(\R^{d-1})}} = \sup_{f_1,\ldots,f_k \not \equiv 0} \frac{M(f_1,\ldots,f_k)}{\prod_{j=1}^k \|f_j\|_{L^{p_j}(\R^{d-1})}}.
$$
We stress, however, that our theorem covers only the non-endpoint cases satisfying $(p_1^{-1},\ldots,p_k^{-1}) \neq \mathbf q(b_0)$ and $b_0$ extreme, so it is not known that either side is finite except in certain cases (cf.\ \cite{BCW, DLW, DSjfa, Oberlin, BSjfa}).  

If we fix $j$, we may consider the family of curves $\gamma_j^{\underline{x}}(t) := \pi_j(\underline x,t)$.  For any smooth one-to-one function $\phi:\R \to \R$, $(\underline x,t) \mapsto (\underline x,\phi(t))$ is also smooth and one-to-one and has Jacobian determinant $\phi'(t)$.  Thus we obtain the following.

\begin{corollary}
The generalized affine arclength defines a parametrization-invariant measure on each of the curves $\gamma_j^{\underline x} = \pi_j(\underline x,t)$.
\end{corollary}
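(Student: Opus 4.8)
The plan is to deduce the corollary directly from Proposition~\ref{P:diffeo}, applied to the special diffeomorphism that reparametrizes the last coordinate. First I would make precise the measure in question: for fixed $j$ and fixed $\underline x$ (ranging over the open slice $U_{\underline x} := \{t \in \R : (\underline x,t) \in U\}$), let $\mu_j^{\underline x}$ denote the pushforward of the measure $\rho(\underline x,t)\,dt$ under the parametrization $t \mapsto \gamma_j^{\underline x}(t) = \pi_j(\underline x,t)$; that is, $\int_{\R^{d-1}} g\, d\mu_j^{\underline x} = \int_{U_{\underline x}} g(\gamma_j^{\underline x}(t))\,\rho(\underline x,t)\, dt$ for continuous $g$. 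A reparametrization of this curve by a smooth one-to-one $\phi:\R\to\R$ replaces $\gamma_j^{\underline x}$ by $t \mapsto \gamma_j^{\underline x}(\phi(t))$ and produces, via \eqref{E:def X}--\eqref{E:def rho} applied to the reparametrized submersions, a new density $\tilde\rho$ and hence a new pushforward measure $\tilde\mu_j^{\underline x}$; the content of the corollary is that $\tilde\mu_j^{\underline x} = \mu_j^{\underline x}$.

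To carry this out, I would set $F(\underline x,t) := (\underline x,\phi(t))$ and $G_i := \mathrm{id}$ for all $1 \le i \le k$, so that $\tilde\pi_i = \pi_i \circ F$ and $\tilde\pi_j(\underline x,t) = \gamma_j^{\underline x}(\phi(t))$. Since $\det DG_i \equiv 1$ (so the exponents $\mathbf q_i(b_0)$ in \eqref{E:diffeo} play no role) and $\det DF(\underline x,t) = \phi'(t)$, Proposition~\ref{P:diffeo} gives at once
\[
\tilde\rho(\underline x,t) = |\phi'(t)|\,\rho(\underline x,\phi(t)),
\]
where I am using that the minimality hypothesis on $I_0$ required by that proposition is in force throughout this section. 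It then remains only to substitute: for continuous $g$,
\[
\int_{\R^{d-1}} g\, d\tilde\mu_j^{\underline x} = \int g(\gamma_j^{\underline x}(\phi(t)))\,|\phi'(t)|\,\rho(\underline x,\phi(t))\, dt = \int g(\gamma_j^{\underline x}(s))\,\rho(\underline x,s)\, ds = \int_{\R^{d-1}} g\, d\mu_j^{\underline x},
\]
the middle equality being the change of variables $s = \phi(t)$; since $\phi$, being continuous and injective, is monotone, the absolute value $|\phi'(t)|$ exactly accounts for the Jacobian and for any reversal of orientation.

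I do not expect a serious obstacle here: the corollary is essentially a restatement of Proposition~\ref{P:diffeo} in the one-dimensional, fiberwise special case. The only points requiring mild care are the domain bookkeeping (one works on $U_{\underline x}$ and its image under $\phi$, not all of $\R$) and the degenerate possibility that $\phi'$ vanishes at isolated points, in which case $\phi$ fails to be a genuine diffeomorphism; the change-of-variables identity above nonetheless persists — by excising the measure-zero set where $\phi' = 0$, or by approximation — and in any event one may restrict attention to regular reparametrizations, for which $\phi' \neq 0$ everywhere.
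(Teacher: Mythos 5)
Your proof is correct and follows exactly the same route as the paper: apply Proposition~\ref{P:diffeo} with $F(\underline x,t) = (\underline x,\phi(t))$ and all $G_i = \mathrm{id}$, so that $\det DF = \phi'(t)$ and \eqref{E:diffeo} collapses to $\tilde\rho = |\phi'|\,\rho\circ F$, after which the change of variables $s=\phi(t)$ gives invariance of the pushforward measure. Your remark on the case where $\phi'$ vanishes is a reasonable precaution, but it is already absorbed into Proposition~\ref{P:diffeo}, which is stated for arbitrary smooth $F$ and proved by approximating by the nondegenerate case.
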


\begin{proof}[Proof of Proposition~\ref{P:diffeo}]  
We will prove the proposition first when the $G_j$ are equal to the identity and then when $F$ is.  The general case follows by taking compositions.  

In the first case, it suffices by simple approximation arguments to prove the identity when $\det DF \neq 0$.  In this case, careful computations reveal that
$$
\widetilde X_j = (\det DF) F^* X_j,
$$
where $F^*$ is the pullback by $F$, given by 
\begin{equation} \label{E:pushforward}
F^*X := (DF)^{-1}X \circ F.
\end{equation}

For $1 \leq i \leq k$, let $Y_i = F^* X_i$.  Then by naturality of the Lie bracket, $Y_w = F^* X_w$, $w \in \scriptW$.  By induction (with base case $w=(j)$), the coordinate expression for the Lie bracket ($[X,X'] = X(X')-X'(X)$), and the product rule, for each $w \in \scriptW$,
\begin{equation} \label{E:Yw plus error}
\widetilde X_w = (\det DF)^{|\deg w|_1} Y_w + \sum_{\deg w' \prec \deg w} f_{w,w'} Y_{w'},
\end{equation}
where the $f_{w,w'}$ are smooth functions.  

By \eqref{E:Yw plus error}, \eqref{E:pushforward}, and our minimality assumption,
\begin{align*}
&\det(\widetilde X_{w_1},\ldots,\widetilde X_{w_d}) \\
& \qquad = (\det DF)^{|b_0|_1}\det(Y_{w_1},\ldots,Y_{w_d}) + \sum_{b' \prec b_0 } \sum_{\deg I' = b'} f_{I,I'} \det(Y_{w_1'},\ldots,Y_{w_d'})\\
& \qquad = (\det DF)^{|b_0|_1-1}\det(X_{w_1},\ldots,X_{w_d})\circ F + 0.
\end{align*}
This completes the proof in the first case.  

In the second case, when $F$ is the identity, it is easy to compute $\widetilde X_j = [(\det DG_j) \circ \pi_j] X_j$, and it can be shown using the product rule and minimality of $b_0$ (as above) that 
$$
\det(\widetilde X_{w_1},\ldots,\widetilde X_{w_d}) = \prod_{j=1}^k [(\det DG_j)\circ \pi_j]^{b_0^j} \det(X_{w_1},\ldots,X_{w_d}),
$$
which implies \eqref{E:diffeo}.  
\end{proof}


\section{Equivalence of the two polytopes:  The proof of Proposition~\ref{P:equivalence}} \label{S:equivalence}


Fix a point $b_0 \in [0,\infty)^k$.  We say that an object (such as a constant, vector, or set) is admissible if it may be chosen from a finite collection, depending only on $b_0$ and $d$, of such objects.  In particular, all implicit constants in this section will be admissible.  

The proof of Proposition~\ref{P:equivalence} will rely on the following compactness result about polytopes with vertices in $\Z_0^k$.

\begin{proposition}\label{P:polytope prop}
Let $\scriptB \subseteq \Z_0^k$ and assume that $b_0\notin\scriptP(\scriptB)$.  There exist \\
(i)  $\eps > 0$ and $v_0 \in (\eps,1]^k$ such that $v_0 \cdot b_0 + \eps < v_0 \cdot p$ for every $p \in \scriptP(\scriptB)$\\
(ii) a finite set $\scriptA \subseteq \Z_0^k$ such that $b_0 \notin \scriptP(\scriptA)$ and $\scriptP(\scriptB) \subseteq \scriptP(\scriptA)$.\\
Moreover, $\eps,v_0,\scriptA$ are admissible.
\end{proposition}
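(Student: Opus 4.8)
The plan is to reduce, via a compactness argument, to the case where $\scriptB$ lies in a finite admissible family of \emph{bounded} sets, and then to treat that case by elementary separation in $\R^k$. Throughout write $\mathbf 1:=(1,\dots,1)\in\R^k$ and let $\wedge$ denote coordinatewise minimum.

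\emph{Reduction to bounded $\scriptB$.} For $R\in\N$ set $\scriptB^R:=\{b\wedge R\mathbf 1:b\in\scriptB\}$. Since each $b\in\scriptB$ dominates $b\wedge R\mathbf 1\in\scriptB^R$, we have $\scriptP(\scriptB)\subseteq\scriptP(\scriptB^R)$, and $\scriptB^R$ lies in the finite family of subsets of $\{0,\dots,R\}^k$. The heart of the matter is the claim that there is an admissible integer $R_*$ with the property that $b_0\notin\scriptP(\scriptB)$ forces $b_0\notin\scriptP(\scriptB^{R_*})$. I would prove this by contradiction. Were it false, there would be sets $\scriptB_n\subseteq\Z_0^k$ with $b_0\notin\scriptP(\scriptB_n)$ but, by Carathéodory in $\R^k$, $b_0\succeq\sum_{i=1}^{m}\theta_i^{(n)}\bigl(b_i^{(n)}\wedge n\mathbf 1\bigr)$ with $m\le k+1$ (which we may take constant along a subsequence), $\theta_i^{(n)}\ge0$, $\sum_i\theta_i^{(n)}=1$, and $b_i^{(n)}\in\scriptB_n$. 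Pass to a subsequence so that $\theta^{(n)}$ converges and each vector $\theta_i^{(n)}(b_i^{(n)}\wedge n\mathbf 1)$, which satisfies $0\preceq\theta_i^{(n)}(b_i^{(n)}\wedge n\mathbf 1)\preceq b_0$, also converges. Let $S:=\{i:\lim_n\theta_i^{(n)}>0\}$. For $i\in S$ the truncated vectors $b_i^{(n)}\wedge n\mathbf 1$ stay bounded — otherwise $\theta_i^{(n)}\le|b_0|_1\big/|b_i^{(n)}\wedge n\mathbf 1|_1\to0$ — hence are eventually a fixed lattice point $c_i$, and once $n$ exceeds every coordinate of $c_i$ one in fact has $b_i^{(n)}=c_i\in\scriptB_n$. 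Letting $n\to\infty$ in $\sum_{i\in S}\theta_i^{(n)}(b_i^{(n)}\wedge n\mathbf 1)\preceq b_0$ exhibits $b_0$ as a convex combination of the $c_i$, so $b_0\in\scriptP(\scriptB_n)$ for large $n$, a contradiction. The $R_*$ so produced depends only on $b_0$ and $k$.

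\emph{Separation and construction of $\scriptA$.} It then suffices to handle $\scriptB$ ranging over the finite admissible family $\mathcal F$ of subsets of $\{0,\dots,R_*\}^k$ with $b_0\notin\scriptP(\scriptB)$. For such $\scriptB$ the set $\scriptP(\scriptB)=\ch(\scriptB)+[0,\infty)^k$ is closed, convex, upward closed, and omits $b_0$, so strict separation gives $v\neq0$ and $\alpha$ with $v\cdot b_0<\alpha\le v\cdot p$ for all $p\in\scriptP(\scriptB)$; upward closedness forces $v\ge0$. Replacing $v$ by $v+\delta\mathbf 1$ for small $\delta>0$ and rescaling so that $\max_j v_j=1$ yields $v\in(0,1]^k$ still satisfying $v\cdot b_0<v\cdot p$ on $\scriptP(\scriptB)$. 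Taking the minimum over the finitely many members of $\mathcal F$ of half of the gaps $\inf_{p\in\scriptP(\scriptB)}v\cdot p-v\cdot b_0$ and of the numbers $\min_j v_j$ produces one admissible $\eps>0$, and for each $\scriptB$ an admissible $v_0\in(\eps,1]^k$ (namely the vector associated to $\scriptB^{R_*}$) with $v_0\cdot b_0+2\eps\le v_0\cdot p$ on $\scriptP(\scriptB^{R_*})\supseteq\scriptP(\scriptB)$; this is (i). Now let $H:=\{p\in[0,\infty)^k:v_0\cdot p\ge v_0\cdot b_0+\eps\}$ and take $\scriptA$ to be the set of $\preceq$-minimal elements of $H\cap\Z_0^k$. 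Since $v_0\cdot b_0+\eps>0$ we have $0\notin H$, so every $a\in\scriptA$ has some $a_j\ge1$, and minimality forces $v_0\cdot a<v_0\cdot b_0+\eps+v_{0,j}\le|b_0|_1+2$, whence $\scriptA\subseteq\{0,\dots,\lfloor(|b_0|_1+2)/\eps\rfloor\}^k$ is finite; being determined by $(v_0,\eps,b_0)$, it is admissible. Because $H$ is convex and upward closed, $\scriptP(\scriptA)\subseteq H$, so $b_0\notin\scriptP(\scriptA)$; conversely, since $\ell^1$-norms strictly decrease along $\preceq$-descending chains in $\Z_0^k$, every point of $H\cap\Z_0^k$ dominates a point of $\scriptA$, and since $\scriptB^{R_*}\subseteq H$ it follows that $\scriptP(\scriptB)\subseteq\scriptP(\scriptB^{R_*})\subseteq\scriptP(\scriptA)$, which is (ii).

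The delicate step is the compactness claim: one must rule out the scenario in which truncation at height $n$ drags $b_0$ into the polytope purely on account of elements of $\scriptB_n$ with enormous coordinates. What saves the argument is that such elements necessarily carry vanishing weight in any convex representation of a vector $\preceq b_0$, so that in the limit only the (automatically bounded, and genuinely present) elements remain to witness membership of $b_0$.
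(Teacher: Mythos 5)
Your proof is correct, but it takes a genuinely different route from the paper's. The paper builds explicitly: it first proves the non-quantitative equivalences between separating functionals $v_0$ and finite sets $\scriptA$ (Lemmas~\ref{L:A to v0}, \ref{L:v0 to A}), then obtains admissibility for sets of at most $k+1$ integer points by a clever iteration (Lemma~\ref{L:simplex extreme}) in which each newly adjoined point is forced to have small $\ell^1$-norm, derives a coordinatewise separation corollary (Lemma~\ref{L:simplex separation}), and finally constructs $\scriptA$ directly as $\{b \in \scriptB : |b|_1 \leq C\} \cup \{Ce_i\}_{i=1}^k$ for an admissible $C$, ruling out $b_0 \in \scriptP(\scriptA)$ by Carath\'eodory plus the coordinate separation. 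You instead truncate coordinatewise, $b \mapsto b \wedge R\mathbf 1$, and prove by a soft compactness argument (again using Carath\'eodory, but to bound the number of summands rather than to expose a large $\ell^1$-mass) that some admissible $R_*$ preserves exclusion of $b_0$; this reduces everything to the finitely many bounded sets in $\{0,\dots,R_*\}^k$, where you produce $v_0,\eps$ by hyperplane separation and define $\scriptA$ as the $\preceq$-minimal lattice points of a half-space. The paper's argument is more constructive (one can in principle track the admissible constants), while yours is shorter and more conceptually transparent at the cost of a nonconstructive step — the existence of $R_*$ comes from a contradiction/subsequence argument, so no explicit bound on $R_*$ falls out. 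Your observation that truncated elements carrying positive limiting weight must eventually coincide with genuine elements of $\scriptB_n$ is exactly the mechanism that makes the compactness step go through, and it is a nice alternative to the paper's iterative Lemma~\ref{L:simplex extreme}. Both arguments are sound; the key shared ingredient is Carath\'eodory, deployed at different points of the argument.
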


Note that this proposition also applies when $b_0$ is an extreme point of $\scriptP(\scriptB)$, since in this case $b_0 \notin \scriptP(\scriptB \setminus \{b_0\})$.  

Assuming the validity of Proposition~\ref{P:polytope prop} for now (it will be proved in the Appendix), we devote the remainder of the section to the proof of Proposition~\ref{P:equivalence}.  

We may of course assume that $x_0 = 0$ and that $U$ is a bounded neighborhood of 0.  Furthermore, we may assume that $k > d$ and $X_i = \partial_i$, $1 \leq i \leq d$.  Indeed, if the proposition holds under this assumption, it holds for $\partial_1,\ldots,\partial_d,X_1,\ldots,X_k$, with $k+d$ replacing $k$.  We may then transfer the result back to $X_1,\ldots,X_k$ by restricting to those $b \in [0,\infty)^{k+d}$ with $b^{1} = \cdots = b^{d} = 0$.  By this assumption, $\scriptP_0 \neq \emptyset$, and it suffices to prove that if $b_0$ is an extreme point of $\scriptP_{x_0}$, then \eqref{E:main equivalence} holds, and if $b_0 \notin \scriptP_{x_0}$, then $b_0 \notin \tilde\scriptP_{x_0}$.  

We begin with the case when $b_0$ is an extreme point of $\scriptP_0$.  Fix a neighborhood $V$ of $0$, sufficiently small for later purposes, with $\overline V \subseteq U$.  Choose a $d$-tuple $I_0 = (w_1,\ldots,w_d) \in \scriptW^d$ with $\deg I_0 = b_0$ and 
\begin{equation} \label{E:I=wi}
|\lambda_{I_0}(0)| = \max_{\deg I = b_0} |\lambda_I(0)|.
\end{equation}
(Note that $I_0$ is admissible, since only finitely many $d$-tuples of words give rise to this degree.)  By smoothness of the $X_j$, we may assume that $V$ is so small that 
$$
\tfrac14 |\lambda_{I_0}(0)| \leq \tfrac12 \max_{\deg I = b_0}|\lambda_I(x)| \leq |\lambda_{I_0}(x)| \leq 2|\lambda_{I_0}(0)|, \qquad x \in V.
$$

By Proposition~\ref{P:polytope prop}, we may choose admissible $v_0 = (v_0^1,\ldots,v_0^k) \in (0,1]^k$ and $\eps > 0$ such that $v_0 \cdot b_0 + \eps < v_0 \cdot p$ for every $p \in \scriptP_0\cap\Z_0^k \setminus \{b_0\}$.  

\begin{lemma} \label{L:Ys}
For each $m \geq 1$, there exists $\delta(m) > 0$, depending on $m, b_0,X_1,\ldots,X_k$, such that for all $0 < \delta < \delta(m)$, the map
\begin{equation} \label{E:define Phi delta}
\Phi^\delta(y_1,\ldots,y_d) := \exp(y_1 \delta^{v_0 \cdot \deg w_1}X_{w_1}+\cdots+y_d\delta^{v_0 \cdot w_d} X_{w_d})(0)
\end{equation}
and pullbacks
\begin{equation} \label{E:define Y delta}
Y_j^\delta := (\Phi^\delta)^* \delta^{v_0^j} X_j = (D\Phi^\delta)^{-1} \delta^{v_0^j} X_j \circ \Phi^\delta
\end{equation}
satisfy the following properties:  $\Phi^\delta$ is a diffeomorphism of the unit ball $B(1)$ onto a neighborhood of 0 in $V$,
\begin{gather}
\label{E:det DPhidelta}
|\det D\Phi^\delta(y)| \sim \delta^{v_0 \cdot b_0} |\lambda_{I_0}(0)|, \qquad y \in B(1), \\
\label{E:Ys smooth}
\|Y_j^\delta\|_{C^m(B(1))} \lesssim 1, \qquad 1 \leq j \leq k\\
\label{E:det Ys is 1}
|\det(Y_{w_1}^\delta(y),\ldots,Y^\delta_{w_d}(y))| \sim 1, \qquad y \in B(1).
\end{gather}
\end{lemma}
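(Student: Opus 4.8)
\emph{The plan.}
This lemma is a non-isotropic scaling (``quantitative Frobenius'') statement, and the plan is to exhibit $\Phi^\delta$ as a \emph{fixed} diffeomorphism composed with an anisotropic dilation, so that the $\delta$-dependence becomes completely explicit. Write $c_l := v_0\cdot\deg w_l$ for $1\le l\le d$; since $v_0\in(\eps,1]^k$ and each $w_l$ is a nonempty word, $c_l\ge\eps>0$, and $\sum_{l=1}^d c_l = v_0\cdot\deg I_0 = v_0\cdot b_0$. I would set $\Phi(u):=\exp(u_1 X_{w_1}+\cdots+u_d X_{w_d})(0)$; for $u$ in a ball $B(r_0)$ whose radius depends on the $X_i$, this is a well-defined smooth map with $\Phi(0)=0$ and $D\Phi(0)=(X_{w_1}(0)\mid\cdots\mid X_{w_d}(0))$, hence, after shrinking $r_0$, a diffeomorphism of $B(r_0)$ onto a neighborhood of $0$ in $V$. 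With $D_\delta(y):=(\delta^{c_1}y_1,\dots,\delta^{c_d}y_d)$ one reads off from \eqref{E:define Phi delta} that $\Phi^\delta=\Phi\circ D_\delta$; for $\delta$ small, $D_\delta(B(1))$ is an ellipsoidal neighborhood of $0$ contained in $B(r_0)$, so $\Phi^\delta$ is a diffeomorphism of $B(1)$ onto a neighborhood of $0$ in $V$. Since $\det D\Phi^\delta(y)=\delta^{v_0\cdot b_0}\det D\Phi(D_\delta y)$ and $\det D\Phi(D_\delta y)\to\det D\Phi(0)=\lambda_{I_0}(0)\ne0$ uniformly on $B(1)$ as $\delta\to0$, estimate \eqref{E:det DPhidelta} follows once $\delta$ is small enough, in a way depending on the $X_i$. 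This first step uses nothing about extremality.

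\emph{The rescaled vector fields.}
Next I would pull back through $\Phi^\delta=\Phi\circ D_\delta$. By \eqref{E:pushforward}, $Y_j^\delta=\delta^{v_0^j}D_\delta^*(\Phi^*X_j)$; more generally, writing $Y^\delta_w$ for the iterated bracket of the $Y^\delta_j$ indexed by a word $w$, naturality of the Lie bracket gives $Y^\delta_w=(\Phi^\delta)^*(\delta^{v_0\cdot\deg w}X_w)=\delta^{v_0\cdot\deg w}D_\delta^*(\Phi^*X_w)$. Expand the \emph{fixed} smooth vector field $\Phi^*X_w$ on $B(r_0)$ as $\sum_{\alpha,i}a^w_{\alpha,i}\,y^\alpha\partial_{y_i}$, up to a high-order remainder to be absorbed into an $o(1)$ $C^m$-error as $\delta\to0$. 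Since $D_\delta^*(y^\alpha\partial_{y_i})=\delta^{\,v_0\cdot(\sum_l\alpha_l\deg w_l-\deg w_i)}y^\alpha\partial_{y_i}$, the monomial $y^\alpha\partial_{y_i}$ enters $Y^\delta_w$ with the power $\delta^{\,e(w,\alpha,i)}$, where $e(w,\alpha,i):=v_0\cdot\deg w+v_0\cdot(\sum_l\alpha_l\deg w_l-\deg w_i)$. The structural fact I would establish is: \emph{(a)} $a^w_{\alpha,i}\ne0$ forces $e(w,\alpha,i)\ge0$; and \emph{(b)} the finitely many coefficients $a^w_{\alpha,i}$ with $e(w,\alpha,i)=0$ are bounded by admissible constants. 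Granting \emph{(a)}--\emph{(b)}, $Y^\delta_w$ splits as an admissibly bounded polynomial vector field (the $e=0$ terms, independent of $\delta$) plus terms carrying strictly positive powers of $\delta$, hence converges in $C^m(B(1))$ to an admissibly bounded limit as $\delta\to0$. The case $w=(j)$ then yields \eqref{E:Ys smooth} for all $\delta<\delta(m)$, with $\delta(m)$ depending on $m$, $b_0$, and the $X_i$. For the case $w=w_l$: at $y=0$ only $\alpha=0$ contributes, and $a^{w_l}_{0,i}=\bigl((D\Phi(0))^{-1}X_{w_l}(0)\bigr)^i$ vanishes for $i\ne l$ and equals $1$ for $i=l$, while $e(w_l,0,l)=0$, so $Y^\delta_{w_l}(0)=\partial_{y_l}$; thus $\det(Y^\delta_{w_1},\dots,Y^\delta_{w_d})=1$ at $y=0$, and by the $C^m$-convergence this determinant is $\sim1$ throughout $B(1)$ for $\delta$ small, which is \eqref{E:det Ys is 1}.

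\emph{Why \emph{(a)}--\emph{(b)} hold, and the main obstacle.}
This is the hard part, and it is where extremality of $b_0$ enters. When $\alpha=0$, Cramer's rule gives $a^w_{0,i}=\lambda_{I_0^{(w,i)}}(0)/\lambda_{I_0}(0)$, where $I_0^{(w,i)}$ is $I_0$ with its $i$-th word replaced by $w$, so that $\deg I_0^{(w,i)}=b_0-\deg w_i+\deg w$ and $v_0\cdot\deg I_0^{(w,i)}=v_0\cdot b_0+e(w,0,i)$. If $a^w_{0,i}\ne0$, then $\lambda_{I_0^{(w,i)}}(0)\ne0$, so $\deg I_0^{(w,i)}\in\scriptP_0\cap\Z_0^k$, whence the separation property of $v_0$ furnished by Proposition~\ref{P:polytope prop} forces either $\deg I_0^{(w,i)}=b_0$ --- so $e(w,0,i)=0$ and, by \eqref{E:I=wi}, $|a^w_{0,i}|\le1$ --- or $v_0\cdot\deg I_0^{(w,i)}>v_0\cdot b_0+\eps$, i.e.\ $e(w,0,i)>\eps$; in either case \emph{(a)}--\emph{(b)} hold for $\alpha=0$. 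The higher coefficients $a^w_{\alpha,i}$ are not single Cramer ratios, but via the transport identities that follow from $\Phi^*(\sum_l u_l X_{w_l})=\sum_l u_l\partial_{u_l}$ together with \eqref{E:Jacobi}, one expects them to be built out of ratios $\lambda_I(0)/\lambda_{I_0}(0)$ with $v_0\cdot\deg I=v_0\cdot b_0+e(w,\alpha,i)$, so that the same dichotomy applies. Turning this heuristic into \emph{(a)}--\emph{(b)} with \emph{admissible} constants, uniformly in $\delta$, is the main obstacle; I would handle it by invoking the quantitative (Frobenius-type) results of \cite{Street}, applied to the admissible finite family $\{X_w : v_0\cdot\deg w\le R\}$ (with $R$ an admissible bound large enough for the given $m$), equipped with formal degrees $v_0\cdot\deg w$, spanning $d$-tuple $X_{w_1},\dots,X_{w_d}$, bracket control \eqref{E:Jacobi}, and the minimality of $\sum_l v_0\cdot\deg w_l=v_0\cdot b_0$ among the weights of spanning subfamilies --- which is precisely the separation property of $v_0$. (One may take $v_0$ with rational entries and clear denominators, so the formal degrees become positive integers, matching the hypotheses of \cite{Street}.) Once \emph{(a)}--\emph{(b)} are available, the splitting in the previous paragraph delivers \eqref{E:Ys smooth} and \eqref{E:det Ys is 1} with no further difficulty.
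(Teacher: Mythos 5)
Your decomposition $\Phi^\delta = \Phi\circ D_\delta$ (with $D_\delta$ the anisotropic dilation) is a clean observation that gives \eqref{E:det DPhidelta} directly, and the Cramer computation $a^w_{0,i}=\lambda_{I_0^{(w,i)}}(0)/\lambda_{I_0}(0)$ correctly identifies why the separation property of $v_0$ and the maximality in \eqref{E:I=wi} control the $\alpha=0$ Taylor coefficients. This is a more hands-on route than the paper, which instead verifies the hypotheses of Theorem~5.3 of \cite{Street} (smallness and $C^M$-boundedness of $\delta^{v_0\cdot\deg w}X_w$, bracket closure with admissibly bounded coefficients $c_{w,w'}^{\tilde w,\delta}$) and applies it as a black box to obtain all three conclusions on $B(\eta)$, then rescales to $B(1)$.

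However, there are two genuine gaps. First, for \eqref{E:det Ys is 1} the inference ``$\det(Y^\delta_{w_1},\ldots,Y^\delta_{w_d})(0)=1$, and by the $C^m$-convergence this determinant is $\sim1$ throughout $B(1)$'' does not follow: $C^m$-convergence to a limit whose determinant is $1$ at the origin says nothing about the limit's determinant elsewhere on $B(1)$. Your decomposition actually hands you a rigorous fix for free: from $Y^\delta_w=\delta^{v_0\cdot\deg w}(D\Phi^\delta)^{-1}X_w\circ\Phi^\delta$ one gets the exact identity $\det(Y^\delta_{w_1},\ldots,Y^\delta_{w_d})(y)=\lambda_{I_0}(\Phi(D_\delta y))/\det D\Phi(D_\delta y)\to 1$ uniformly on $B(1)$ as $\delta\to0$, so the limit is identically $1$; but as written the step is not justified. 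Second, and more seriously, you acknowledge that \emph{(a)}--\emph{(b)} for higher-order $\alpha$ is the crux and ``I would handle it by invoking \ldots\ \cite{Street}'' --- but Street's Theorem~5.3 gives the \emph{conclusions} \eqref{E:Ys smooth} and \eqref{E:det Ys is 1}, not the intermediate Taylor-coefficient structure \emph{(a)}--\emph{(b)}; extracting the latter would require digging into Street's proof rather than citing the statement. Moreover, your proposed dictionary (scalar formal degrees $v_0\cdot\deg w$, after clearing denominators) does not match the paper's: the paper uses the integer $k$-tuples $\deg w$ as formal degrees and encodes $v_0$ in the one-parameter scaling family $\{(\delta^{v_0^1},\ldots,\delta^{v_0^k})\}$, which is what Street's multi-parameter framework expects. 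Once you invoke Street correctly the Taylor machinery becomes unnecessary, so the cleanest route is the paper's: verify \eqref{E:deltaXsmooth}, \eqref{E:delta lambda smaller}, \eqref{E:cwww smooth}, apply Theorem~5.3, and rescale.
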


\begin{proof}
Recall that $\scriptW$ is the set of all words.  Let 
\begin{equation}\label{E:Wi}
\scriptW_0 := \{w \in \scriptW : \deg w \cdot v_0 \leq d\} \:\: \text{and}\:\: \scriptW_1 := \{w \in \scriptW : d < \deg w \cdot v_0 \leq 2d\}.
\end{equation}
Since $v_0$ is an admissible element of $(0,1]^k$, these are admissible, finite sets, and $\scriptW_0$ contains the one-letter words:  $(1),(2),\ldots,(k)$.  Furthermore, $\scriptW_0$ contains $b_0$ since our choice of $v_0$ and assumption that $X_j = \partial_j$, $1 \leq j \leq d$, imply that
$$
v_0 \cdot b_0 \leq v_0 \cdot (1,\ldots,1,0,\ldots,0) = (v_0)_1 + \cdots + (v_0)_d \leq d.
$$

The vector fields $X_w$ are all smooth, $\scriptW_0 \cup \scriptW_1$ is a finite set, and each coefficient of $v_0$ is positive.  Thus for each $M \geq 0$, for all sufficiently small $\delta>0$ and all $w \in \scriptW_0 \cup \scriptW_1$,
\begin{equation} \label{E:deltaXsmooth}
\|\delta^{v_0 \cdot \deg w} X_w\|_{C^0(V)} \leq \tfrac1d\dist(0,\partial V), \qquad \|\delta^{v_0 \cdot\deg w}X_w\|_{C^M(V)} \leq 1.
\end{equation}
Additionally, by our choice of $v_0,\eps$,
\begin{equation} \label{E:delta lambda smaller}
|\delta^{v_0 \cdot \deg I} \lambda_I(0)| < \delta^\eps|\delta^{v_0 \cdot b_0} \lambda_{I_0}(0)|, \qquad I \in (\scriptW_0 \cup \scriptW_1)^d, \qquad \deg I \neq b_0.
\end{equation}

By the Jacobi identity, if $w,w' \in \scriptW_0$,
\begin{equation} \label{E:jacobi2}
[\delta^{v_0 \cdot \deg w} X_w,\delta^{v_0 \cdot \deg w'} X_{w'}] = \sum_{\deg \tilde w = \deg w + \deg w'} C_{w,w'}^{\tilde w}(\delta^{v_0 \cdot \deg \tilde w} X_{\tilde w}),
\end{equation}
for admissible (because $\scriptW_0$ is) constants $C_{w,w'}^{\tilde w}$.  If $v_0 \cdot (\deg w + \deg w') \leq d$, each $\tilde w$ in the sum is an element of $\scriptW_0$.  If not, each $\tilde w$ is in $\scriptW_1$, and we can expand
$$
\delta^{v_0 \cdot \deg \tilde w} X_{\tilde w} = \sum_{j=1}^d \delta^{v_0 \cdot \deg \tilde w} X_{\tilde w}^j \partial_j = \sum_{j=1}^d (\delta^{v_0 \cdot \deg \tilde w - v_0^j} X_{\tilde w}^j)(\delta^{v_0^j}X_j).
$$
Note that $v_0 \cdot \deg \tilde w - v_0^j > 0$ for $\tilde w \in \scriptW_1$.  Using \eqref{E:jacobi2} to put the pieces back together, for sufficiently small $\delta > 0$ and any $w,w' \in \scriptW_0$, 
$$
[\delta^{v_0 \cdot \deg w} X_w,\delta^{v_0 \cdot \deg w'} X_{w'}] = \sum_{\tilde w \in \scriptW_0} c_{w,w'}^{\tilde w,\delta} \delta^{v_0 \cdot \deg \tilde w} X_{\tilde w},
$$
with
\begin{equation} \label{E:cwww smooth}
\|c_{w,w'}^{\tilde w,\delta}\|_{C^M(V)} \lesssim 1.
\end{equation}

The conclusion of the lemma is now a direct application of Theorem~5.3 of \cite{Street}, whose (lengthy) proof uses compactness arguments and Gromwall's inequality, among other tools.  For the convenience of the reader wishing to verify this, we provide a short dictionary to translate the notation.  Let $M$ be sufficiently large (depending on $m,d,I_0$) and choose $\delta(m)>0$ sufficiently small that \eqref{E:deltaXsmooth}, \eqref{E:delta lambda smaller}, and \eqref{E:cwww smooth} all hold.  Then the terms
$$
\{X_1,\ldots,X_q\}, \{d_1,\ldots,d_q\}, \scriptA, (\delta^d X), n_0(x,\delta)
$$
from \cite{Street} are, in our notation,
$$
\{X_w\}_{w \in \scriptW_0}, \{\deg w\}_{w \in \scriptW_0}, \{(\delta^{v_0^1}, \ldots,\delta^{v_0^k}) : 0 < \delta \leq \delta(m)\}, (\delta^{v_0 \cdot \deg w} X_w)_{w \in \scriptW_0}, d.
$$

\textit{A priori}, the results of \cite{Street} only guarantee that for each $m \geq 0$, there exists an admissible constant $\eta>0$ such that the conclusions hold on $B(\eta)$.  We want $\eta = 1$, but this is just a matter of rescaling.  Define 
$$
D^\eta_{v_0,I_0}(t_1,\ldots,t_d) := (\eta^{v_0 \cdot \deg w_1}t_1,\ldots,\eta^{v_0 \cdot \deg w_d}t_d); 
$$
then
$$
\Phi^{\eta\delta} = \Phi^\delta\circ D^\eta_{v_0,I_0}, \qquad Y_w^{\eta\delta} = (D^\eta_{v_0,I_0})^{-1} \eta^{v_0 \cdot \deg w} Y_w \circ D^\eta_{v_0,I_0}.
$$
Thus the lemma holds with a slightly smaller ($\eta$ times the original) value of $\delta(M)$.  
\end{proof}

\begin{lemma} \label{L:holds for Yw}
Let $m$ be a sufficiently large admissible integer, and let $Y_1,\ldots,Y_k$ be vector fields with the properties that 
\begin{gather}
\label{E:YjCm}
\|Y_j\|_{C^m(B(1))} \lesssim 1,\\
\label{E:Ydet=1}
|\det(Y_{w_1},\ldots,Y_{w_d})| \sim 1 \quad \text{on $B(1)$};
\end{gather}
here we recall that $(w_1,\ldots,w_d) = I_0$.  For $J \in \{1,\ldots,k\}^d$, define
$$
\tilde\Psi^J(t_1,\ldots,t_d) := e^{t_d Y_{J_d}} \circ \cdots \circ e^{t_1 Y_{J_1}}(0).
$$
Then
\begin{equation} \label{E:maxJdet}
\max_{J \in \{1,\ldots,k\}^d} \|\det D\tilde\Psi^J\|_{C^0(B(c_0))} \sim 1,
\end{equation}
for some admissible constant $c_0 >0$; in particular, $\tilde\Psi^J$ is defined on the ball $B(c_0)$.
\end{lemma}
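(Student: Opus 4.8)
\emph{Upper bound and definedness.} These are routine. By \eqref{E:YjCm} and Gr\"onwall's inequality, there is an admissible $c_0>0$ such that for every $J\in\{1,\dots,k\}^d$ the composition $\tilde\Psi^J$ is defined on $B(c_0)$, takes values in $B(1)$, and $\|\tilde\Psi^J\|_{C^m(B(c_0))}\lesssim1$; hence $\det D\tilde\Psi^J$ is $C^{m-1}$ on $B(c_0)$ with $C^{m-1}$-norm $\lesssim1$. This is the ``$\lesssim$'' half of \eqref{E:maxJdet}, and it remains to prove the lower bound $\max_J\|\det D\tilde\Psi^J\|_{C^0(B(c_0))}\gtrsim1$.

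\emph{Reduction to a qualitative statement.} Set $N_0:=|\deg I_0|_1-d$, an admissible integer, and assume $m$ large enough that $N_0\le m-2$. If $f$ on $B(c_0)$ satisfies $\|f\|_{C^0(B(c_0))}\le\eps$ and $\|f\|_{C^{N_0+1}(B(c_0))}\le A$, an interpolation inequality on the ball gives $|\partial^\alpha f(0)|\lesssim A\,\eps^{1/(N_0+1)}$ for $|\alpha|\le N_0$, with constant depending only on $d,N_0,c_0$. Applied to each $f=\det D\tilde\Psi^J$, this shows it suffices to prove
\[
\max\bigl\{\,|\partial_t^\alpha\det D\tilde\Psi^J(0)|:\ J\in\{1,\dots,k\}^d,\ |\alpha|\le N_0\,\bigr\}\gtrsim1 .
\]
The $k$-tuples $(Y_1,\dots,Y_k)$ obeying \eqref{E:YjCm}, \eqref{E:Ydet=1} with fixed implicit constants form a precompact family in $C^{m-1}(B(c_0))^k$, on whose closure (consisting of $C^{m-1}$ tuples still obeying \eqref{E:YjCm}, \eqref{E:Ydet=1}) the left side above is a continuous, $[0,\infty)$-valued functional; so the displayed estimate follows once we know this functional is strictly positive, i.e. that for \emph{any} fixed $C^{m-1}$ $k$-tuple with $\det(Y_{w_1},\dots,Y_{w_d})(0)\ne0$ some $\partial_t^\alpha\det D\tilde\Psi^J(0)$ with $|\alpha|\le N_0$ is nonzero --- equivalently, $\det D\tilde\Psi^J\not\equiv0$ near $0$ for at least one $J$. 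That the order $N_0$ suffices is seen from the Baker--Campbell--Hausdorff/Lie series expansion of $\tilde\Psi^J$: reorganizing its derivative terms by the identity $Z[\det(Z_1,\dots,Z_d)]=(\operatorname{div}Z)\det(Z_1,\dots,Z_d)+\sum_i\det(Z_1,\dots,[Z,Z_i],\dots,Z_d)$ exhibits $\partial_t^\alpha\det D\tilde\Psi^J(0)$ as a universal polynomial combination of bracket determinants $\det(Y_{u_1},\dots,Y_{u_d})(0)$ with $\sum_i\deg u_i\preceq\deg J+\deg_J\alpha$, whose strictly-lower-degree error terms carry strictly-lower-degree bracket-determinant factors; choosing $(J,\alpha)$ with $\deg J+\deg_J\alpha$ equal to a $\preceq$-minimal degree $b^\ast\preceq\deg I_0$ of a nonvanishing bracket determinant at $0$, all errors vanish and one gets a nonzero derivative with $|\alpha|=|b^\ast|_1-d\le N_0$.

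\emph{The qualitative statement.} Since $\det(Y_{w_1},\dots,Y_{w_d})(0)\ne0$, the vectors $\{Y_w(0):w\in\scriptW\}$ span $\R^d$, so by Sussmann's orbit theorem the orbit of $0$ under $\{Y_1,\dots,Y_k\}$ is a $d$-dimensional --- hence open --- submanifold of $\R^d$ containing $0$ (its tangent space at an orbit point is the span of the flow-pushforwards of the $Y_j$, which at $0$ is $\operatorname{span}\{Y_w(0)\}$). The orbit has positive Lebesgue measure, while it is the countable union, over $N\ge1$ and $(i_1,\dots,i_N)$, of the images of the maps $(s_1,\dots,s_N)\mapsto e^{s_NY_{i_N}}\circ\cdots\circ e^{s_1Y_{i_1}}(0)$; by Sard's theorem some such map has a regular point. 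Taking $N$ least with this property forces $N\ge d$, and a trimming argument forces $N=d$: at a regular point the $N$ columns of the differential span $\R^d$, one is redundant, and deleting the corresponding flow factor --- directly if it is the last one, or, after first moving the regular point within the (open) set of regular points until that slot's parameter becomes $0$, if it is an interior or initial one --- yields a length-$(N-1)$ composition based at $0$ with a regular point, contradicting minimality unless $N=d$. With $N=d$ the resulting $\tilde\Psi^J$ is a local diffeomorphism near some point of $B(c_0)$, so $\det D\tilde\Psi^J\not\equiv0$ there, as required.

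\emph{Main obstacle.} The hardest point is the last one: reducing the length of the flow composition to exactly $d$ while keeping $0$ as base point. Deleting an interior flow factor leaves a fixed diffeomorphism wedged into the composition, which cannot be absorbed into the base point, so one must first slide the regular point inside the open set of regular points until the doomed slot's parameter vanishes --- and, simultaneously, retain a column redundancy in that slot --- and showing this is always possible takes work. A subsidiary nuisance is the combinatorial bookkeeping that pins the required derivative order down to the admissible value $|\deg I_0|_1-d$, i.e. verifying that the top-degree part of $\partial_t^\alpha\det D\tilde\Psi^J(0)$ genuinely realizes the relevant bracket determinants instead of collapsing.
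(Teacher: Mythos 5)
Your high-level plan --- upper bound via ODE existence, lower bound via precompactness plus a qualitative nonvanishing statement --- matches the paper's, but two of your steps have real gaps.

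The detour through finite-order Taylor coefficients is unjustified at its key claim. Because the $Y_j$ are only finitely differentiable, $\det D\tilde\Psi^J\not\equiv0$ near $0$ does \emph{not} imply that some Taylor coefficient at $0$ of order $\leq N_0$ (or indeed of any bounded order) is nonzero, so your ``equivalently'' only goes one way, and the way you need is the hard one. The BCH bookkeeping you invoke to close that gap is precisely the direction the Remarks at the end of Section~\ref{S:equivalence} describe as one the author ``has not been able to carry out,'' and those Remarks exhibit the collapse you are worried about: for $\gamma(t)=(t,\dots,t^d)$ and a $\preceq$-minimal but non-extreme degree $b$, the bracket-determinant side of \eqref{E:main equivalence} is a nonzero constant while every $\partial^\alpha\det D\Psi^J(0)$ with $\deg J+\deg_J\alpha=b$ vanishes. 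So ``choose a $\preceq$-minimal $b^*$ and the errors vanish, leaving a nonzero derivative'' is false as stated. Fortunately the detour is unnecessary: the functional $\max_J\|\det D\tilde\Psi^J\|_{C^0(B(c_0))}$ is itself continuous under $C^2$ convergence of the $Y_j$, so precompactness plus Arzel\`a--Ascoli reduce the lower bound in \eqref{E:maxJdet} directly to the purely qualitative statement that $\det D\tilde\Psi^J\not\equiv0$ on $B(c_0)$ for some $J$; this is what the paper does, phrased as a minimizing sequence rather than closure-of-family.

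The other gap is the trimming step in your orbit-theoretic argument, which you acknowledge but do not fill. When the redundant column sits at an interior or initial slot, you must slide the regular point so that slot's parameter becomes $0$ \emph{while the same slot remains redundant}, and neither regularity of the perturbed point nor persistence of the redundancy in the prescribed slot is established. The paper avoids trimming entirely with a direct Frobenius-type argument: supposing every $\det D\tilde\Psi^J\equiv0$ on $B(c_0)$, take the minimal $i<d$ for which all wedges $\partial_{t_1}\tilde\Psi_{i+1}^J\wedge\cdots\wedge\partial_{t_{i+1}}\tilde\Psi_{i+1}^J$ vanish identically; minimality produces a length-$i$ injective immersion $\tilde\Psi_i^J$ on a small ball; the vanishing of the next wedge forces every $Y_j$, and therefore every iterated bracket of order $\leq m$, to be tangent to the $i$-dimensional image, contradicting \eqref{E:Ydet=1}. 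Replacing Sard/trimming with that argument both closes the gap and keeps everything inside $B(c_0)$, which the orbit/Sard route does not naturally do.
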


\begin{proof}
There are similar results in \cite{ChRl, CNSW, BSrevista, TW}, but without the uniformity, so we give a complete proof.  

The upper bound, $\|\det D\tilde\Psi^J\|_{C^0(B(c_0))} \sim 1$ is an immediate consequence of \eqref{E:YjCm} for $m \geq 2$, by Picard's existence theorem.  

For the lower bound, we first show that if $m \geq |b_0|_1+2$, the left side of \eqref{E:maxJdet} is nonzero.  For $1 \leq i \leq d$ and $J \in \{1,\ldots,k\}^i$, define
$$
\tilde\Psi_i^J(t_1,\ldots,t_i) := e^{t_i Y_{J_i}} \circ \cdots \circ e^{t_1Y_{J_1}}(0);
$$
$\tilde\Psi_i^J \in C^{m+1}(B(c_0))$ for admissible $c_0>0$ by standard ODE existence results.  Supposing that the left side of \eqref{E:maxJdet} is zero, there exists some minimal $i \in \{0,\ldots,d-1\}$ such that
$$
\max_{J \in \{1,\ldots,k\}^{i+1}}\|\partial_{t_1}\tilde\Psi_{i+1}^J \wedge \cdots\wedge\partial_{t_{i+1}}\tilde\Psi_{i+1}^J\|_{C^0(B(c_0))} = 0.
$$
By \eqref{E:Ydet=1}, the $Y_j$ cannot all vanish at zero, so this $i$ is at least 1.  

By minimality of $i$, there exist $J \in \{1,\ldots,k\}^i$, $t_0 \in \R^i$ with $|t_0| < c_0$, and $\eps > 0$ such that $\tilde\Psi_i^J$ is an injective immersion on $\{t \in \R^i : |t-t_0| < \eps\} =: B_{t_0}(\eps)$.  Our assumption and the definition of exponentiation imply that for all $1 \leq j \leq k$ and $(t_1,\ldots,t_i) \in B(c_0)$,
\begin{align*}
0 &= (\partial_{t_1}\tilde\Psi_{i+1}^{(J,j)} \wedge \cdots\wedge \partial_{t_{i+1}} \tilde\Psi_{i+1}^{(J,j)})(t_1,\ldots,t_i,0)\\
&= (\partial_{t_1}\tilde\Psi_i^J \wedge \cdots \wedge \partial_{t_i}\tilde\Psi_i^J)(t_1,\ldots,t_i) \wedge Y_j(\tilde\Psi_i^J(t_1,\ldots,t_i)).
\end{align*}
Therefore $Y_1,\ldots,Y_k$ are tangent to $\tilde\Psi_i^J(B_{c_0}(\eps))$, as must be any Lie brackets that are defined, in particular all of those up to order $m$.  Since $m \geq |b_0|_1$, this contradicts \eqref{E:Ydet=1}.  Tracing back, we see that we must have $\det \tilde\Psi^J \not\equiv 0$ on $B(c_0)$ for some $J \in \{1,\ldots,k\}^d$.  

Now we prove that there is a uniform lower bound for $m:= |b_0|_1+3$.  If not, there exists a sequence $(Y_1^{(n)},\ldots,Y_k^{(n)})$ satisfying hypotheses \eqref{E:YjCm} and \eqref{E:Ydet=1}, but with
$$
\max_{J \in \{1,\ldots,k\}^d} \|\det D\tilde\Psi^{(n),J}\|_{C^0(B(c_0))} \to 0,
$$
where $\tilde \Psi^{(n),J}(t_1,\ldots,t_d) := e^{t_dY_{J_d}^{(n)}} \circ \cdots \circ e^{t_1Y_{J_1}^{(n)}}(0)$.  By Arzela--Ascoli, after passing to a subsequence, each $(Y_j^{(n)})$ converges in $C^{m-1}(B(1))$ to some vector field $Y_j$.  Thus for $|\deg w|_1 \leq m-1$, $Y_w^{(n)} \to Y_w$, and by standard ODE results, for each $J$, the sequence $(\tilde\Psi^{(n),J})$ converges to $\tilde\Psi^J$ in $C^m(B(c_0))$.  So $Y_1,\ldots,Y_k$ satisfy hypotheses \eqref{E:YjCm} and \eqref{E:Ydet=1} (the former with $m=|b_0|_1+2$), but $\det D\tilde\Psi^J \equiv 0$ on $B(c_0)$, for all $J \in \{1,\ldots,k\}^d$.  This is impossible, so the lower bound in \eqref{E:maxJdet} must hold.  
\end{proof}

We return to a consideration of the vector fields $X_1,\ldots,X_k$ in the next lemma, where we transfer the inequality in Lemma~\ref{L:holds for Yw} from $\tilde\Psi^J$ to $\Psi^J$.

\begin{lemma} \label{L:dadetDPsiJ}
For $J \in \{1,\ldots,k\}^d$ and $\alpha \in \Z_0^d$, if $v_0 \cdot(\deg J + \deg_J\alpha) < v_0 \cdot b_0$, then $\partial^\alpha \det D\Psi^J(0) = 0$.  Furthermore,
\begin{equation} \label{E:dadetDPsiJ}
\sum_{J \in \{1,\ldots,k\}^d} \sum_{\underset{v_0 \cdot (\deg J + \deg_J\alpha) = v_0 \cdot b_0}{\alpha \in (\Z_0)^d}}|\partial^\alpha\det D\Psi^J(0)| \sim |\lambda_{I_0}(0)|.
\end{equation}
\end{lemma}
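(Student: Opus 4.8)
\emph{The plan} is to transfer the uniform bounds of Lemma~\ref{L:holds for Yw} for the rescaled fields back to statements about the $X_j$, via an identity relating $\Psi^J$ to the flow maps of the $Y^\delta_j$ from Lemma~\ref{L:Ys}. Since $Y^\delta_j=(\Phi^\delta)^*(\delta^{v_0^j}X_j)$, naturality of flows under $(\Phi^\delta)^*$ gives that the time-$t_\ell$ flow of $Y^\delta_{J_\ell}$ is $(\Phi^\delta)^{-1}\circ e^{t_\ell\delta^{v_0^{J_\ell}}X_{J_\ell}}\circ\Phi^\delta$; composing these and using $\Phi^\delta(0)=0$ yields, for small $\delta$ and $t\in B(c_0)$,
\[
\tilde\Psi^{J,\delta}(t)=(\Phi^\delta)^{-1}\bigl(\Psi^J(\sigma^{J,\delta}(t))\bigr),\qquad \sigma^{J,\delta}(t):=(\delta^{v_0^{J_1}}t_1,\ldots,\delta^{v_0^{J_d}}t_d),
\]
where $\tilde\Psi^{J,\delta}$ is the map of Lemma~\ref{L:holds for Yw} taken with $Y_j=Y^\delta_j$ (these satisfy \eqref{E:YjCm}--\eqref{E:Ydet=1} uniformly in $\delta$ by Lemma~\ref{L:Ys}); the right-hand side is well defined because it equals $\tilde\Psi^{J,\delta}(t)\in B(1)$. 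Since $\det D\sigma^{J,\delta}=\delta^{v_0\cdot\deg J}$ and $|\det D\Phi^\delta|\sim\delta^{v_0\cdot b_0}|\lambda_{I_0}(0)|$ on $B(1)$, the chain rule gives
\[
|\det D\tilde\Psi^{J,\delta}(t)|\sim\frac{\delta^{\,v_0\cdot\deg J-v_0\cdot b_0}}{|\lambda_{I_0}(0)|}\,\bigl|\det D\Psi^J(\sigma^{J,\delta}(t))\bigr|,\qquad t\in B(c_0).
\]

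I would then Taylor expand $\det D\Psi^J$ at the origin to a high order $L$; substituting $s=\sigma^{J,\delta}(t)$ converts $s^\alpha$ into $\delta^{v_0\cdot\deg_J\alpha}t^\alpha$, so the $\alpha$-term carries the factor $\delta^{v_0\cdot(\deg J+\deg_J\alpha)}$, while the Taylor remainder is $O(\delta^{(L+1)\min_i v_0^i})$ uniformly on $B(c_0)$ because $\sigma^{J,\delta}$ contracts by at least $\delta^{\min_i v_0^i}$. Feeding this into the displayed relation, $\det D\tilde\Psi^{J,\delta}(t)$ is, up to the uniform comparability constant, equal to $|\lambda_{I_0}(0)|^{-1}\sum_\alpha\frac{\partial^\alpha\det D\Psi^J(0)}{\alpha!}\,\delta^{v_0\cdot(\deg J+\deg_J\alpha)-v_0\cdot b_0}\,t^\alpha$ plus an $O(\delta^\eta)$ error for some admissible $\eta>0$. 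For the first assertion of the lemma I argue by contradiction: if $\partial^{\alpha_0}\det D\Psi^J(0)\neq0$ for some $\alpha_0$ with $v_0\cdot(\deg J+\deg_J\alpha_0)<v_0\cdot b_0$, I pick such an $\alpha_0$ minimizing $v_0\cdot(\deg J+\deg_J\alpha)$; then the lowest-order term in $\delta$ has a negative exponent, and its coefficient is a nonzero polynomial in $t$ (distinct monomials $t^\alpha$ cannot cancel), so $|\det D\tilde\Psi^{J,\delta}(t)|\to\infty$ as $\delta\to0$ for generic $t\in B(c_0)$, contradicting the uniform bound $\|\det D\tilde\Psi^{J,\delta}\|_{C^0(B(c_0))}\lesssim1$ from \eqref{E:maxJdet}.

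With all negative exponents of $\delta$ excluded, letting $\delta\to0$ leaves only the terms with $v_0\cdot(\deg J+\deg_J\alpha)=v_0\cdot b_0$: writing $R_J(t):=\sum_{v_0\cdot(\deg J+\deg_J\alpha)=v_0\cdot b_0}\frac{\partial^\alpha\det D\Psi^J(0)}{\alpha!}t^\alpha$, which is a polynomial of admissibly bounded degree since $(\min_i v_0^i)\,|\alpha|\le v_0\cdot\deg_J\alpha\le v_0\cdot b_0$, one obtains $|\det D\tilde\Psi^{J,\delta}(t)|\sim|R_J(t)|/|\lambda_{I_0}(0)|+O(\delta^\eta)$ on $B(c_0)$. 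Now I transfer both halves of \eqref{E:maxJdet}: the lower bound, applied along a subsequence of $\delta\to0$ on which the maximizing $J$ is fixed (there being finitely many), gives $\|R_J\|_{C^0(B(c_0))}\gtrsim|\lambda_{I_0}(0)|$ for that $J$, and the upper bound gives $\|R_J\|_{C^0(B(c_0))}\lesssim|\lambda_{I_0}(0)|$ for every $J$. On the ball $B(c_0)$ of admissible radius the $C^0$-norm of a polynomial of admissibly bounded degree is comparable to the $\ell^1$-norm of its coefficient vector, so these become $\sum_\alpha|\partial^\alpha\det D\Psi^J(0)|\gtrsim|\lambda_{I_0}(0)|$ for one $J$ and $\sum_\alpha|\partial^\alpha\det D\Psi^J(0)|\lesssim|\lambda_{I_0}(0)|$ for each $J$, the sums over the $\alpha$'s appearing in $R_J$; summing over the (admissibly many) $J$ that contribute a nonzero $R_J$ gives \eqref{E:dadetDPsiJ}.

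\emph{The main obstacle} is the bookkeeping just used twice over: one must keep careful track of which power $\delta^{v_0\cdot(\deg J+\deg_J\alpha)-v_0\cdot b_0}$ each Taylor coefficient $\partial^\alpha\det D\Psi^J(0)$ produces, control the Taylor remainder uniformly in $\delta$ on $B(c_0)$, and — for the upper estimate in \eqref{E:dadetDPsiJ} — verify that only admissibly many $J$ give a nonvanishing $R_J$. This last point is where extremality of $b_0$ is essential: a nonvanishing $\partial^\alpha\det D\Psi^J(0)$ forces $\deg J+\deg_J\alpha\in\scriptP_{x_0}$, i.e.\ the inclusion $\widetilde\scriptP_{x_0}\subseteq\scriptP_{x_0}$ (which I would establish first, by running the same rescaling scheme with the $\eps$-separating functional furnished by Proposition~\ref{P:polytope prop}), so on the face $\{v_0\cdot q=v_0\cdot b_0\}$ this degree must equal $b_0$, and there are only admissibly many pairs $(J,\alpha)$ with $\deg J+\deg_J\alpha=b_0$.
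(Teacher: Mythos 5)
Your proposal is correct and follows essentially the same route as the paper: relate $\Psi^J$ to $\tilde\Psi^{J,\delta}$ through the rescaling $\Phi^\delta$, Taylor-expand the Jacobian, match powers of $\delta$, and send $\delta\searrow 0$, using the uniform two-sided bound \eqref{E:maxJdet} (the paper Taylor-expands $\det D\Psi^{J,\delta}$ and invokes equivalence of norms on polynomials, while you Taylor-expand $\det D\Psi^J$ and argue the vanishing statement by a blow-up contradiction, but these are cosmetically different presentations of the same estimate). One small simplification you can make: the concern in your final paragraph about counting the $J$ with $R_J\not\equiv 0$ is unnecessary, since $b_0\in[0,\infty)^k$ fixes $k$, so the total number of $J\in\{1,\ldots,k\}^d$ is already admissible and no appeal to extremality or to $\widetilde\scriptP_{x_0}\subseteq\scriptP_{x_0}$ is needed at this stage.
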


\begin{proof}
For $J \in \{1,\ldots,k\}^d$, let
\begin{align*}
&\Psi^{J,\delta} := \Psi^J \circ D_J^\delta, \qtq{where} D_J^\delta(t_1,\ldots,t_d):= (\delta^{v_0^{J_1}}t_1,\ldots,\delta^{v_0^{J_d}}t_d),\\
&\tilde\Psi^{J,\delta} := e^{t_d Y_{J_d}^\delta} \circ \cdots \circ e^{t_1 Y_{J_1}^\delta}(0),
\end{align*}
with $Y_1^\delta,\ldots,Y_k^\delta$ as in \eqref{E:define Y delta}.  By naturality of exponentiation, $\Psi^{J,\delta} = \Phi^\delta \circ \tilde\Psi^{J,\delta}$, where $\Phi^\delta$ is defined in \eqref{E:define Phi delta}.  Hence by Lemmas~\ref{L:Ys} and~\ref{L:holds for Yw},
\begin{equation} \label{E:maxDPsiJ}
\max_{J \in \{1,\ldots,k\}^d} \|\det D\Psi^{J,\delta}\|_{C^0(B(c_0))} \sim \delta^{v_0 \cdot b_0} |\lambda_{I_0}(0)|, \qquad 0 < \delta < \delta(m), 
\end{equation}
where $m=m(b_0,d)$ is sufficiently large and $\delta(m)$ is the (inadmissible) constant from Lemma~\ref{L:Ys}.  As we will see, the lemma follows by sending $\delta\searrow0$.  

Let $M = M(b_0,d)$ be a sufficiently large integer, let $J \in \{1,\ldots,k\}^d$, and let $P^{J,\delta}$ be the degree $M$ Taylor polynomial of $\det D\Psi^{J,\delta}$, centered at 0.  Then
\begin{align}\notag
&\|P^{J,\delta}-\det D\Psi^{J,\delta}\|_{C^0(B(c_0))} = (\tfrac{\delta}{\delta(m)})^{v_0 \cdot \deg J} \|P^{J,\delta(m)}-\det D\Psi^{J,\delta(m)}\|_{C^0(D^{\delta/\delta(m)}B(c_0))}\\\notag
&\qquad\qquad \lesssim (\tfrac\delta{\delta(m)})^{v_0 \cdot \deg J+(M+1)\min_i v_0^i} \|\det D\Psi^{J,\delta(m)}\|_{C^0(D^{\delta/\delta(m)}B(c_0))}\\\label{E:PolyErr}
&\qquad\qquad\lesssim(\tfrac\delta{\delta(m)})^{v_0 \cdot \deg J + (M+1)\min_i v_0^i}, 
\end{align}
where the first inequality is by Taylor's theorem and admissibility of $M$, and the second is from \eqref{E:deltaXsmooth}, provided $m$ is sufficiently large depending on $M$.  Motivated by this inequality, we assume that $v_0 \cdot b_0 < M \min_i v_0^i$.

By the equivalence of all norms on the space of degree at most $M$ polynomials of $d$ variables,
\begin{equation} \label{E:PolyNorms}
\|P^{J,\delta}\|_{C^0(B(c_0))} \sim \sum_{|\alpha|_1 \leq M} |\partial^\alpha P^{J,\delta}(0)| = \sum_{|\alpha|_1 \leq M} \delta^{v_0 \cdot(\deg J + \deg_J\alpha)}|\partial^\alpha \det D\Psi^J(0)|.
\end{equation}
If $\alpha \in \Z_0^d$ and $v_0 \cdot(\deg J + \deg_J\alpha) \leq v_0 \cdot b_0$, then $|\alpha|_1 \leq \tfrac1{\min_iv_0^i}(v_0 \cdot \deg_J\alpha) \leq M$, and 
\begin{align*}
&\delta^{v_0 \cdot(\deg J + \deg_J\alpha)}|\partial^\alpha \det D\Psi^J(0)| = |\partial^\alpha P^{J,\delta}(0)| \lesssim \|P^{J,\delta}\|_{C^0(B(c_0))}\\
&\qquad\qquad\lesssim\|\det D\Psi^{J,\delta}\|_{C^0(B(c_0))} + (\tfrac\delta{\delta(m)})^{v_0 \cdot \deg J + (M+1)\min_i v_0^i}\\
&\qquad\qquad \lesssim \delta^{v_0 \cdot b_0}|\lambda_{I_0}(0)| + (\tfrac\delta{\delta(m)})^{v_0 \cdot \deg J + (M+1)\min_i v_0^i}.
\end{align*}
Sending $\delta \searrow 0$, we see that
\begin{align}
\label{E:detDPsiJ0}
&\partial^\alpha\det D\Psi^J(0) = 0, \qtq{whenever} v_0 \cdot(\deg J + \deg_J\alpha) < v_0 \cdot b_0, \\
\label{E:detDPsiJsmall}
&|\partial^\alpha\det D\Psi^J(0)| \lesssim |\lambda_{I_0}(0)| \qtq{if} v_0 \cdot(\deg J + \deg_J\alpha) = v_0 \cdot b_0.
\end{align}

Now for the lower bound.  By \eqref{E:maxDPsiJ} and the fact that there are only finitely many choices for $J$, there exist $J \in \{1,\ldots,k\}^d$ and a sequence $\delta_n \searrow 0$ such that
\begin{equation} \label{E:detDPsiJbig}
\|\det D\Psi^{J,\delta_n}\|_{C^0(B(c_0))} \gtrsim \delta_n^{v_0 \cdot b_0} |\lambda_{I_0}(0)|.
\end{equation}
Since $M \min_i v_0^i > v_0 \cdot b_0$ and $\lambda_{I_0}(0) \neq 0$, \eqref{E:detDPsiJbig}, \eqref{E:PolyErr}, and \eqref{E:PolyNorms} imply that for $\delta_n$ sufficiently  (inadmissibly) small,
$$
\delta_n^{v_0 \cdot b_0} |\lambda_{I_0}(0)| \lesssim \|P^{J,\delta_n}\|_{C^0(B(c_0))} \lesssim \sum_{|\alpha|_1 \leq M} \delta_n^{v_0 \cdot(\deg J + \deg_J\alpha)}|\partial^\alpha \det D\Psi^J(0)|.
$$
Applying \eqref{E:detDPsiJ0} and letting $n \to \infty$, 
$$
|\lambda_{I_0}(0)| \lesssim \sum_{v_0 \cdot(\deg J + \deg_J\alpha) = v_0 \cdot b_0} |\partial^\alpha \det D\Psi^J(0)|.
$$
This completes the proof of \eqref{E:dadetDPsiJ}, and thus of Lemma~\ref{L:dadetDPsiJ}.
\end{proof}

By our choice of $v_0$, \eqref{E:dadetDPsiJ} is just \eqref{E:main equivalence}, so to complete the proof of Proposition~\ref{P:equivalence}, it suffices to prove the following.

\begin{lemma} \label{L:one P0}
$\scriptP_0 = \widetilde\scriptP_0$.
\end{lemma}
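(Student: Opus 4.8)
The plan is to deduce $\scriptP_0=\widetilde\scriptP_0$ from two separate inclusions. First I record a purely combinatorial observation: for any $\scriptB\subseteq\Z_0^k$ one has $\scriptP(\scriptB)=\ch(\scriptB)+[0,\infty)^k$, the set of $\preceq$-minimal elements of $\scriptB$ is finite (Dickson's lemma), and the extreme points of $\scriptP(\scriptB)$ are precisely among those minimal elements; hence $\scriptP(\scriptB)$ is the sum of $[0,\infty)^k$ with the convex hull of its (finitely many) extreme points. Applying this to both $\scriptP_0$ and $\widetilde\scriptP_0$, it suffices to prove $\scriptP_0\subseteq\widetilde\scriptP_0$ and $\widetilde\scriptP_0\subseteq\scriptP_0$, and for each it is enough to verify that every extreme point of the smaller polytope lies in the larger one (using convexity and invariance under translation by $[0,\infty)^k$).

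The inclusion $\scriptP_0\subseteq\widetilde\scriptP_0$ follows at once from what has already been proved. If $b_0$ is an extreme point of $\scriptP_0$, then $b_0\in\{\deg I:\lambda_I(0)\neq 0\}$, so the left-hand side of \eqref{E:main equivalence} is at least $|\lambda_{I_0}(0)|>0$; since \eqref{E:main equivalence} has been established for every extreme point of $\scriptP_0$, the right-hand side is then positive as well, so there exist $J\in\{1,\dots,k\}^d$ and $\alpha\in\Z_0^d$ with $\deg J+\deg_J\alpha=b_0$ and $\partial^\alpha\det D\Psi^J(0)\neq 0$. Thus $b_0\in\widetilde\scriptP_0$, and hence $\scriptP_0\subseteq\widetilde\scriptP_0$.

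The substantive part is $\widetilde\scriptP_0\subseteq\scriptP_0$, which reduces to the implication that $\partial^\alpha\det D\Psi^J(0)\neq 0$ forces $\deg J+\deg_J\alpha\in\scriptP_0$. To prove this I would start from the expansion $\partial_{t_\ell}\Psi^J(t)=\sum_{w}c^\ell_w(t)\,X_w(\Psi^J(t))$, where each $c^\ell_w$ is a polynomial all of whose monomials $t^\gamma$ satisfy $\deg_J\gamma=\deg w-e_{J_\ell}$ (this is the standard Lie-series expression for the Jacobian columns of a second-kind canonical coordinate map). Expanding the determinant, $\det D\Psi^J(t)=\sum_I P^J_I(t)\,\lambda_I(\Psi^J(t))$, where the polynomial $P^J_I$ has all monomial exponents $\gamma$ with $\deg J+\deg_J\gamma=\deg I$. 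Differentiating and applying the Leibniz rule, only finitely many $I$ contribute to $\partial^\alpha\det D\Psi^J(0)$, and each contributing $I$ has $\deg I=\deg J+\deg_J\beta$ with $\beta\le\alpha$, hence $\deg I\preceq\deg J+\deg_J\alpha$. For the remaining factors $\partial^{\alpha-\beta}[\lambda_I\circ\Psi^J](0)$ I would iterate the identity
$$
X_{w'}\lambda_I=\operatorname{div}(X_{w'})\,\lambda_I+\sum_{m}\det(X_{w_1},\dots,[X_{w'},X_{w_m}],\dots,X_{w_d}),
$$
together with $\partial_i X_w=-X_{(w,i)}$ for $1\le i\le d$ (legitimate because we have reduced to $X_i=\partial_i$, $1\le i\le d$), to write each such quantity as a $\Q$-linear combination of products $\lambda_{I_1}(0)\cdot\prod_s X^{i_s}_{w_s}(0)$ in which the distinguished factor satisfies $\deg I_1\preceq\deg J+\deg_J\alpha$ (tracking degrees: the bracket branch raises the degree of the $\lambda$-factor by $\deg w'$, but the accompanying monomial from $c^\ell_{w'}$ absorbs exactly this increment in the budget $\deg_J(\alpha-\beta)$, while the divergence branch leaves the $\lambda$-factor unchanged). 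Granting this, if $\partial^\alpha\det D\Psi^J(0)\neq 0$ then some such product is nonzero, so $\lambda_{I_1}(0)\neq 0$ for some $I_1$ with $\deg I_1\preceq\deg J+\deg_J\alpha$; then $\deg I_1\in\scriptP_0$, and since $\scriptP_0$ is closed under adding elements of $[0,\infty)^k$, $\deg J+\deg_J\alpha\in\scriptP_0$. Combined with the previous paragraph this gives $\scriptP_0=\widetilde\scriptP_0$.

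The step I expect to be the main obstacle is the combinatorial bookkeeping in the last paragraph: proving by induction on $|\alpha|_1$ that every term produced by the iterated Leibniz/chain-rule expansion of $\partial^\alpha\det D\Psi^J(0)$ retains a factor $\lambda_{I_1}(0)$ with $\deg I_1\preceq\deg J+\deg_J\alpha$, rather than collapsing entirely into divergence-type factors, requires carrying the degree accounting carefully through each application of the $X_{w'}\lambda_I$ identity. The reductions in the first two paragraphs, the structural expansion of $\det D\Psi^J$, and the verification that $\partial^\nu\lambda_I(0)$ is an integer combination of $\lambda_{I'}(0)$ with $\deg I'=\deg I+\nu$, are routine; in particular no further use of the combinatorial Proposition~\ref{P:polytope prop} is needed for this direction, since all sums involved are finite.
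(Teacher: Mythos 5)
Your inclusion $\scriptP_0\subseteq\widetilde\scriptP_0$ coincides with the paper's: an extreme point of $\scriptP_0$ makes the determinant side of \eqref{E:main equivalence} nonzero, hence the Jacobian side nonzero. The divergence is in the reverse inclusion, and there your route is genuinely different. The paper argues dually: if $b_0\notin\scriptP_0$, it picks a separating vector $v_0$, perturbs it so that some extreme point $b_1$ of $\scriptP_0$ uniquely minimizes $v_0\cdot(\cdot)$ over $\scriptP_0$, and then invokes the vanishing half of Lemma~\ref{L:dadetDPsiJ} (proved via the Street compactness machinery of Lemma~\ref{L:Ys}) to conclude that $\partial^\alpha\det D\Psi^J(0)=0$ whenever $v_0\cdot(\deg J+\deg_J\alpha)<v_0\cdot b_1$; since $v_0$ has positive entries and $v_0\cdot b_0<v_0\cdot b_1$, this places $b_0$ outside $\widetilde\scriptP_0$. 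You instead prove the positive contrapositive directly by algebra: expand $\det D\Psi^J(t)$ in the $\lambda_I\circ\Psi^J$ basis, differentiate, and track degrees to show every surviving term carries a factor $\lambda_{I_1}(0)$ with $\deg I_1\preceq\deg J+\deg_J\alpha$, so each generator of $\widetilde\scriptP_0$ already lies in $\scriptP_0$. This is precisely the ``alternative proof that the right (Jacobian) side of \eqref{E:main equivalence} is bounded by the left (determinant) side'' that the paper mentions in the Remarks following this lemma, and it is sound: it sidesteps the compactness argument and Lemma~\ref{L:dadetDPsiJ} entirely, at the cost of the Leibniz/Jacobi bookkeeping you flag as the main obstacle (the paper's word for it is ``tedious''). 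One small structural note: the extreme-point reduction in your first paragraph is only needed for $\scriptP_0\subseteq\widetilde\scriptP_0$; for the other inclusion your expansion shows the whole generating set of $\widetilde\scriptP_0$ sits in $\scriptP_0$, which is enough directly since $\scriptP_0$ is convex and closed under adding $[0,\infty)^k$.
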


\begin{proof}
By \eqref{E:main equivalence}, $\tilde\scriptP_0$ contains the extreme points of $\scriptP_0$, so $\scriptP_0 \subseteq \tilde\scriptP_0$.  Now suppose that $b_0 \notin \scriptP_0$.  Then there exist $v_0 \in (0,1]^k$ and $\eps > 0$ such that $v_0 \cdot b_0 + \eps < v_0 \cdot p$, for all $p \in \scriptP_0$.  At least one extreme point $b$ of $\scriptP_0$ satisfies $v_0 \cdot b = \max_{p \in \scriptP_0} v_0 \cdot p$; perturbing $v_0$ slightly, we may assume that there exists $b_1 \in \scriptP_0$ such that
$$
v_0 \cdot b_0 < v_0 \cdot b_1 < v_0 \cdot p, \qquad \qtq{for all} b_1 \neq p \in \scriptP_0.
$$
By Lemma~\ref{L:dadetDPsiJ}, $\partial^\alpha \det D\Psi^J(0) = 0$ whenever $(\deg J + \deg_J\alpha) \cdot v_0 < v_0 \cdot b_1$, so $b_0 \notin \tilde\scriptP_0$.  Thus $\scriptP_0 \subseteq \tilde\scriptP_0$, and we are done.
\end{proof}

\subsection*{Remarks}  A more direct argument, using the Baker--Campbell--Hausdorff formula should be possible, but the author has not been able to carry this out.  Let $k=d$ and consider vector fields $X_1,\ldots, X_d$.  Using the approximation $\exp(tX) = \sum_{n=0}^N \tfrac{t^n}{n!} X^{n-1}(X) + O(|t|^N)$, which may be found in \cite{CNSW}, the formula for the Lie derivative of a determinant of $d$ vector fields, and somewhat tedious computations, one can show that
\begin{align*}
&\partial_t^\alpha|_{t=0} \det D_t \bigl(e^{t_dX_d} \circ \cdots \circ e^{t_1 X_1}\bigr)(x_0) \\ &\qquad = \pm \sum_{w_1,\ldots,w_d}^* \prod_{i=1}^d \binom{\alpha_i}{\deg_i w_{i+1},\ldots, \deg_i w_d} \det(X_{w_1},X_{w_2},\ldots,X_{w_d}),
\end{align*}
where the $*$ indicates that the sum is taken over those words $w_i = (w_i^1,\ldots,w_i^{n_i})$ satisfying $\sum_i \deg w_i = \alpha + (1,\ldots,1)$ and $w_i^1 = i > w_i^2 \geq \cdots \geq w_i^{n_i}$ (in particular, $w_1=(1)$).  Replacing $X_i$ above with $X_{J_i}$ gives an alternative proof that the right (Jacobian) side of \eqref{E:main equivalence} is bounded by the left (determinant) side, but using this formula to bound the left of \eqref{E:main equivalence} by the right seems nontrivial.

The estimate \eqref{E:main equivalence} may fail if $b$ is not extreme (even if it is minimal).  To see this, let $\gamma(t) := (t,\ldots,t^d)$ and define $X_0 := \partial_t$, $X_i := \partial_t - \gamma'(t)\cdot \nabla_x$, $1 \leq i \leq d$, and take $b := (1+\tfrac{d(d-1)}2,1,\ldots,1)$.  In this case, the only $I$ with $\deg I=b$ and $\lambda_I \not\equiv 0$ are those of the form 
$$
I=((1),(j_1),(1,j_2),\ldots,(1,\ldots,1,j_d)),
$$
with the $j_i$ distinct.  Thus the left side of \eqref{E:main equivalence} is a non-zero dimensional constant.  On the other hand, simple combinatorial considerations show that the right side of \eqref{E:main equivalence} must be identically zero.  

Less uniform versions of \eqref{E:main equivalence} may be found in \cite{CNSW, BSrevista, TW}.  Let $X_1,\ldots,X_k$ be smooth vector fields and assume that there exists a $d$-tuple $I = (w_1,\ldots,w_d)$ such that $|\lambda_I| \geq 1$ on $U$.  Let  $\delta_1,\ldots,\delta_k$ be scalars satisfying the smallness and weak comparability conditions
$$
\delta_i \leq K, \qquad \delta_i \leq K\delta_j^\eps, \qquad 1 \leq i,j \leq k.
$$
Then \cite{TW, BSrevista} prove that there exist $N \geq |\deg I|_1$ and $N'$ (depending on $I$) such that
\begin{align*}
&\sum_{|\deg I|_1 \leq N} \bigl(\prod_{i=1}^k \delta_i^{(\deg I)_i}\bigr) |\lambda_I(x_0)| 
\\ & \qquad \sim \sum_{J \in \{1,\ldots,k\}^d} \sum_{\underset{\deg J + \deg_J \alpha \leq N'}{\alpha \in (\Z_0)^d}} \bigl(\prod_{i=1}^k \delta_i^{\deg J + \deg_J \alpha}\bigr)|\partial^\alpha_t \det D_t\Psi^J_{x_0}(0)|, \qquad x_0 \in U,
\end{align*}
with inadmissible implicit constants.  It is not shown, however, how to remove the dependence of the implicit constant on $\eps$, $K$, or the $X_i$, or, in particular, how to remove the assumption that the H\"ormander condition holds uniformly.


\section{Proof the optimality result:  Proposition~\ref{P:optimality}} \label{S:optimality}


The entirety of this section will be devoted to the proof of Proposition~\ref{P:optimality}.  It suffices to prove the proposition when $\supp \mu \subseteq V$, and $V$ and $W$ are bounded open subsets of $U$ with $\overline V \subseteq W$, $\overline W \subseteq U$.  (Recall that $U$ is the set on which the $\pi_i$, and hence the $X_i$, are defined.)  By \eqref{E:M with mu} with $E_i = \pi_i(V)$, $1 \leq i \leq k$, $\mu(V) < \infty$.  

Throughout this section, an object will be said to be admissible if it depends (or it is taken from a finite set depending) only on $d$ and $p = (p_1,\ldots,p_k)$.  All implicit constants will be admissible.  The constant $A(\mu)$ will always represent precisely the quantity in \eqref{E:M with mu}, and in particular will not be allowed to change from line to line.  

First suppose that $p_{j_0} < 1$.  Without loss of generality, $j_0=1$.  We may cover $\pi_1(V)$ by $C_{V,\pi_1} \eps^{-(d-1)}$ balls $B_i$ of radius $\eps$, so
\begin{align*}
\mu(V) &\leq \sum_i \int \chi_{B_1}\circ \pi_1 \prod_{j=2}^k \chi_{\pi_j(V)}\circ \pi_j \, d\mu \leq A(\mu) \sum_i |B_1|^{1/p_1} \prod_{j=2}^k |\pi_j(V)|^{1/p_j} \\
&\leq C(\mu, d, p, V, \pi_2,\ldots,\pi_k) \eps^{(d-1)(\frac1{p_1}-1)}.
\end{align*}
Letting $\eps \to 0$, we see that $\mu \equiv 0$.   

We now turn to the case when $\sum_j p_j^{-1}>1$.  Replacing $\{X_1,\ldots,X_k\}$ with $\{\partial_1,\ldots,\partial_d,X_1,\ldots,X_k\}$, $(p_1,\ldots,p_k)$ with $(\infty,\ldots,\infty,p_1,\ldots,p_k)$, and $k$ with $d+k$ if necessary, we may assume that $X_i = \partial_i$, $1 \leq i \leq d$, without affecting either of the following sets
\begin{align*}
Z &:= \{x \in V : b_p \notin \scriptP_x\}\\
\Omega &:= \{x \in V : b_p \ctc{is an extreme point of} \scriptP_x\},
\end{align*}
or the quantity on the right of \eqref{E:mu lesssim rho}. 

The proposition will follow from the next two lemmas.

\begin{lemma} \label{L:mu(Z)=0}
$\mu(Z) = 0$.
\end{lemma}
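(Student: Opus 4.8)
The plan is to cover $Z$ by curved boxes at small scales and to play the hypothesis \eqref{E:M with mu} against the Jacobian identities of Proposition~\ref{P:equivalence}.

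\emph{Reduction to a separated set.} If $x\in Z$ then $b_p\notin\scriptP_x=\scriptP(\scriptB_x)$, $\scriptB_x:=\{\deg I:\lambda_I(x)\neq0\}$, so Proposition~\ref{P:polytope prop} produces an admissible pair $v_0\in(\eps,1]^k$, $\eps>0$ with $v_0\cdot b_p+\eps<v_0\cdot q$ for all $q\in\scriptP(\scriptB_x)$. Writing $Z_{v_0,\eps}$ for the set of $x\in V$ at which a given such admissible pair has this property, $Z=\bigcup_{(v_0,\eps)}Z_{v_0,\eps}$ is a finite union, so it suffices to show $\mu(Z_{v_0,\eps})=0$. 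Fix one and set $S:=Z_{v_0,\eps}$; since $S=\bigcap_{v_0\cdot\deg I\le v_0\cdot b_p+\eps}\{\lambda_I=0\}$, it is relatively closed in $V$, and $\lambda_I\equiv0$ on $S$ whenever $v_0\cdot\deg I\le v_0\cdot b_p+\eps$. For $x\in S$ let $b_1(x)$ be a vertex of $\scriptP_x$ minimizing $q\mapsto v_0\cdot q$; then $v_0\cdot b_1(x)\ge v_0\cdot b_p+\eps$, and, since by Proposition~\ref{P:equivalence} we have $\widetilde\scriptP_x=\scriptP_x$, the derivative $\partial_t^\alpha\det D\Psi^J_x(0)$ vanishes unless $\deg J+\deg_J\alpha\in\scriptP_x$, i.e.\ unless $v_0\cdot(\deg J+\deg_J\alpha)\ge v_0\cdot b_1(x)$.

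\emph{Adapted boxes.} Recall $X_i=\partial_i$ for $1\le i\le d$, so $(1,\dots,1,0,\dots,0)\in\scriptP_x$ and $v_0\cdot b_1(x)\le d$; hence $|b_1(x)|_1$ is admissibly bounded, and only finitely many values of $b_1(x)$, and of an associated $d$-tuple $I_1(x)$ with $\deg I_1(x)=b_1(x)$ and $\lambda_{I_1(x)}(x)\neq0$, occur. For $x_0\in S$, choose via Proposition~\ref{P:equivalence} a $d$-tuple $J$ and multi-index $\alpha$ with $\deg J+\deg_J\alpha=b_1(x_0)$ and $|\partial_t^\alpha\det D\Psi^J_{x_0}(0)|\gtrsim|\lambda_{I_1(x_0)}(x_0)|=:c(x_0)>0$, and form the curved box
\[
Q_\delta(x_0):=\Psi^J_{x_0}\bigl([0,\delta^{v_0^{J_1}}]\times\cdots\times[0,\delta^{v_0^{J_d}}]\bigr),\qquad 0<\delta<1,
\]
in the spirit of Section~\ref{S:equivalence}. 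The Taylor expansion of the anisotropically rescaled $\det D\Psi^{J,\delta}_{x_0}$ (cf.\ the proof of Lemma~\ref{L:dadetDPsiJ}) has no terms of order below $v_0\cdot b_1(x_0)$ in $\delta$ and has a coefficient of that exact order of size $\gtrsim c(x_0)$; hence, for $\delta$ below an $x_0$-dependent threshold and after passing to a subregion on which $\Psi^{J,\delta}_{x_0}$ is a diffeomorphism, $|Q_\delta(x_0)|\gtrsim c(x_0)\,\delta^{v_0\cdot b_1(x_0)}$. On the other hand, describing $\Psi^J$ as a composition of flows — flowing along $X_{J_i}$ fixes $\pi_j$ when $J_i=j$ and moves it otherwise by an amount controlled by $\delta^{v_0^{J_i}}$ and by the brackets allowed to enter — yields $|\pi_j(Q_\delta(x_0))|\lesssim\delta^{e_j}$, $1\le j\le k$, where, crucially, the vanishing on $S$ of the relevant brackets forces the exponents $e_j=e_j(v_0,J)$ to be strictly larger than their values for a generic family of submersions.

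\emph{Covering.} Partition $S$ into countably many Borel pieces on each of which $c(\cdot)\ge n^{-1}$, the threshold above for $\delta$ is $\ge n^{-1}$, and $(b_1,I_1,J,\alpha)$ are constant; it suffices to show each such piece $S'$ is $\mu$-null. Fix $\delta<n^{-1}$. By a Vitali-type covering argument — each box has volume $\gtrsim n^{-1}\delta^{v_0\cdot b_1}$ and is comparable, up to admissibly bounded distortion, to a dilate of a fixed box — one covers $S'$ by $\lesssim n\,\delta^{-v_0\cdot b_1}$ of the (slightly dilated) boxes $Q_{C\delta}(x_i)$, $x_i\in S'$. Applying \eqref{E:M with mu} with $E_j:=\pi_j(Q_{C\delta}(x_i))$, via $\chi_{Q_{C\delta}(x_i)}\le\prod_j\chi_{E_j}\circ\pi_j$, gives $\mu(Q_{C\delta}(x_i))\le A(\mu)\prod_j|E_j|^{1/p_j}\lesssim A(\mu)\,\delta^{\sum_j e_j/p_j}$, whence
\[
\mu(S')\lesssim n\,A(\mu)\,\delta^{\,\sum_j e_j/p_j-v_0\cdot b_1}.
\]
Letting $\delta\searrow0$ gives $\mu(S')=0$, provided the exponent $\sum_j e_j/p_j-v_0\cdot b_1$ is strictly positive. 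This positivity is the arithmetic core of the argument; it is here that the separation $v_0\cdot b_1\ge v_0\cdot b_p+\eps$ and the standing hypothesis $\sum_j p_j^{-1}>1$ — which forces $\sum_j p_j^{-1}=|b_p|_1/(|b_p|_1-1)$ to exceed $1$ — are used, together with the fact that $p_i=\infty$, hence contributes nothing, for $i\le d$.

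The step I expect to be the main obstacle is precisely the passage from $b_p\notin\scriptP_{x_0}$ to the quantitative gain in the $\pi_j$-image bounds: identifying the $e_j$ and verifying $\sum_j e_j/p_j>v_0\cdot b_1$. This is the curved-box analogue of the mechanism by which Oberlin \cite{OberlinMich} precludes excess mass in the translation-invariant case, and the absence of any uniform finite-type hypothesis on the $X_i$ is what necessitates the countable decomposition in the last step.
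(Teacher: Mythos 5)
Your overall architecture agrees with the paper's: reduce via Proposition~\ref{P:polytope prop} to an admissible $(v_0,\eps)$, build anisotropic curved boxes via exponential maps, cover by a Vitali argument, apply \eqref{E:M with mu} box by box, and let $\delta\searrow 0$. The divergence --- and the gap, which you correctly flag at the end --- is in how the quantitative gain is produced. You try to locate it in degeneracy-enhanced exponents $e_j$ for the individual image sizes $|\pi_j(Q_\delta)|$, and leave both the identification of $e_j$ and the positivity $\sum_j e_j/p_j - v_0\cdot b_1>0$ unverified. This step does not resolve itself: the projections do not shrink independently of the ball volume, and there is no natural source for a per-$j$ gain.

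The paper dissolves the obstacle by shifting where the gain lives. One has the always-valid bound $|\pi_j(B(x,\delta))|\lesssim\delta^{-v^j}|B(x,\delta)|$ (the displayed estimate \eqref{E:compare size}, a one-line coarea/flow argument using doubling), which carries no degeneracy information; the entire gain is packaged into the smallness of the volume, $|B(x,\delta)|\sim\delta^{v\cdot\deg I_x^\delta}|\lambda_{I_x^\delta}(x)|<\delta^{\eps}\delta^{v\cdot b_p}$, where the two-sided volume estimate is Street's theorem and the second inequality is \eqref{E:lambda I small} (valid on $Z'$ because $\scriptP_x\subseteq\scriptP(\scriptA)$). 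The identity $\sum_i v^i/p_i=(v\cdot b_p)(\sum_i p_i^{-1}-1)$, a consequence of $p^{-1}=\mathbf q(b_p)$, then gives
$$\prod_j|\pi_j(B)|^{1/p_j}\lesssim|B|\cdot\bigl(|B|\,\delta^{-v\cdot b_p}\bigr)^{\sum_j p_j^{-1}-1}\lesssim|B|\,\delta^{\eps(\sum_j p_j^{-1}-1)},$$
and summing over the disjoint small balls closes the argument. (If you plug in $e_j=v_0\cdot b_1-v_0^j$, your exponent becomes $(v_0\cdot b_1-v_0\cdot b_p)(\sum_j p_j^{-1}-1)>\eps(\sum_j p_j^{-1}-1)>0$, matching the paper --- but establishing those $e_j$ is exactly the two-sided volume estimate from \cite{Street} plus the projection bound \eqref{E:compare size}.) Two smaller points: the paper chooses $I_x^\delta$ to maximize $\delta^{v\cdot\deg I}|\lambda_I(x)|$ (a $\delta$-dependent choice), not a fixed vertex $b_1(x)$ minimizing $v_0\cdot q$; and it obtains a uniform admissible threshold $\delta_0$ from Street's theorem rather than resorting to your countable pigeonholing on $c(x_0)\geq n^{-1}$ --- your device is plausible but unnecessary once the uniform estimate is available.
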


\begin{lemma} \label{L:mu(O')}
If $\rho := \sum_{\deg I = b_p} |\lambda_I|^{\frac1{|b_p|_1-1}}$ and 
$$
\Omega_n := \{x \in \Omega : 2^n \leq \rho(x) \leq 2^{n+1}\}, \qquad n \in \Z,
$$
then $\mu(\Omega') \lesssim A(\mu) 2^n |\Omega'|$ for any Borel set $\Omega' \subseteq \Omega_n$.
\end{lemma}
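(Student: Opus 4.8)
The plan is to test the restricted strong-type hypothesis \eqref{E:M with mu} against a family of anisotropically rescaled ``curved boxes'' built from the flows of the $X_j$, whose size is calibrated to $\rho$ via Proposition~\ref{P:equivalence}, and then to run a covering argument. Throughout, ``admissible'' and the reduction to $X_i = \partial_i$, $1 \le i \le d$, are as in Section~\ref{S:optimality}. First, since $b_p$ is extreme in $\scriptP_x$ for every $x \in \Omega$, Proposition~\ref{P:equivalence} gives $\sum_{\deg I = b_p} |\lambda_I(x)| \sim \sum_J \sum_{\alpha \,:\, \deg J + \deg_J \alpha = b_p} |\partial_t^\alpha \det D\Psi_x^J(0)|$ with admissible constants; as the sum defining $\rho$ has admissibly many terms, $\rho \sim \bigl( \sum_{\deg I = b_p} |\lambda_I| \bigr)^{1/(|b_p|_1 - 1)}$, so on $\Omega_n$ one has $\sum_{J, \alpha} |\partial_t^\alpha \det D\Psi_x^J(0)| \sim 2^{n(|b_p|_1 - 1)}$. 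Only admissibly many pairs $(J, \alpha)$ satisfy $\deg J + \deg_J \alpha = b_p$, so pigeonholing over these and over the admissible data of Proposition~\ref{P:polytope prop}, and partitioning $\Omega'$ into the resulting finitely many Borel pieces, I would reduce to proving $\mu(\mathcal O) \lesssim A(\mu)\, 2^n\, |\mathcal O|$ for a fixed such pair $(J, \alpha_0)$, a fixed admissible $v_0 \in (0,1]^k$ that separates $b_p$ from the other lattice points of $\scriptP_x$, and every Borel $\mathcal O \subseteq \Omega_n$ on which $|\partial_t^{\alpha_0} \det D\Psi_x^J(0)| \gtrsim 2^{n(|b_p|_1 - 1)}$.

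Fix such an $\mathcal O$. For $x \in \mathcal O$ and small $\delta > 0$, put $D_J^\delta(t) := (\delta^{v_0^{J_1}} t_1, \ldots, \delta^{v_0^{J_d}} t_d)$, $R_\delta := D_J^\delta(B(c_0))$, and $B_x^\delta := \Psi_x^J(R_\delta)$, so $x = \Psi_x^J(0) \in B_x^\delta$. Repeating the Taylor-expansion argument behind Lemma~\ref{L:dadetDPsiJ}, with $2^{n(|b_p|_1 - 1)}$ in place of $|\lambda_{I_0}(0)|$, shows that for all small $\delta$ one has $|\det D\Psi_x^J| \lesssim 2^{n(|b_p|_1 - 1)} \delta^{v_0 \cdot \deg_J \alpha_0}$ on $R_\delta$ and $\gtrsim 2^{n(|b_p|_1 - 1)} \delta^{v_0 \cdot \deg_J \alpha_0}$ on a subset of $R_\delta$ of measure $\gtrsim |R_\delta|$, whence $|B_x^\delta| \sim 2^{n(|b_p|_1 - 1)} \delta^{v_0 \cdot b_p}$. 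Next, using that $X_j$ is tangent to the fibres of $\pi_j$ together with the structure of the iterated exponential map $\Psi^J$ (as analyzed in \cite{TW, BSrevista}), I would show $B_x^\delta \subseteq \bigcap_{j=1}^k \pi_j^{-1}(E_j^x)$ for Borel sets $E_j^x \subseteq \R^{d-1}$ with $|E_j^x| \lesssim 2^{n c_j} \delta^{m_j}$; for indices $j$ outside the range of $J$ one takes $E_j^x := \pi_j(W)$, which is harmless since then $b_p^j = 0$ and so $p_j = \infty$. The exponents $c_j, m_j$ depend only on $(J, \alpha_0, v_0)$, and --- this is the crux --- the relation $p_j^{-1} = b_p^j / (|b_p|_1 - 1)$ combined with the degree identities in Proposition~\ref{P:equivalence} forces the weighted sums $\sum_j c_j / p_j = |b_p|_1$ and $\sum_j m_j / p_j = v_0 \cdot b_p$. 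Granting this, $\prod_j |E_j^x|^{1/p_j} \lesssim 2^{n |b_p|_1} \delta^{v_0 \cdot b_p} \sim 2^n\, |B_x^\delta|$, and \eqref{E:M with mu} applied with $E_j = E_j^x$ gives $\mu(B_x^\delta) \le A(\mu) \prod_j |E_j^x|^{1/p_j} \lesssim A(\mu)\, 2^n\, |B_x^\delta|$, uniformly in $x \in \mathcal O$ and in small $\delta$.

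It remains to upgrade this to a bound on $\mu(\mathcal O)$. Fix $\delta$ small. On the ``good'' sub-box, where $\det D\Psi_x^J$ is comparable to its supremum, the rescaled map $\tilde\Psi_x^{J,\delta}$ of Lemma~\ref{L:Ys} satisfies $\| \tilde\Psi_x^{J,\delta} \|_{C^1} \lesssim 1$ and $|\det D\tilde\Psi_x^{J,\delta}| \sim 1$, hence (via $\sigma_{\min} \gtrsim |\det| \, \| \cdot \|^{-(d-1)}$) is bi-Lipschitz with an admissible constant there; composing with $D_J^\delta$ and the auxiliary diffeomorphism $\Phi_x^\delta$ then exhibits each $x \in \mathcal O$ in a set comparable, up to an admissible anisotropic linear map (pigeonholing once more over the admissibly many ``shapes'' that occur), to a Euclidean ball $B_x \subseteq B_x^\delta$ with $|B_x| \sim |B_x^\delta|$. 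The Besicovitch covering lemma then yields a subfamily $\{B_{x_i}\}$ of admissibly bounded overlap covering $\mathcal O$, so $\mu(\mathcal O) \le \sum_i \mu(B_{x_i}^\delta) \lesssim A(\mu)\, 2^n \sum_i |B_{x_i}| \lesssim A(\mu)\, 2^n\, \bigl| \bigcup_i B_{x_i}^\delta \bigr|$; since $\diam B_x^\delta \to 0$ as $\delta \to 0$, a routine outer regularization (enlarge $\mathcal O$ to a slightly larger open set and let $\delta \to 0$) replaces the right side by $A(\mu)\, 2^n\, |\mathcal O|$, and the argument applies verbatim to any Borel $\Omega' \subseteq \mathcal O$.

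The main obstacle is the middle step: constructing the sets $E_j^x$ with exactly the correct powers of $\delta$ and of $2^n$ and verifying the two weighted-exponent identities, so that all of the distortion of the projections $\pi_j(B_x^\delta)$ is absorbed into the single factor $2^n |B_x^\delta|$ --- the quantitative form of the assertion that the ``curvature'' seen along these curved boxes is measured by the weight $\rho$. The covering step is comparatively routine, but everywhere relies on the uniformity (independence of the constants from the $X_i$ and from $n$, $\delta$) supplied by Proposition~\ref{P:equivalence} and the estimates of Section~\ref{S:equivalence}.
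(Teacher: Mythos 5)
Your proposal is in the spirit of the paper's argument (calibrated anisotropic boxes plus a covering lemma), but it diverges from the paper's route at the central step and, as you yourself flag, leaves that step unproved.

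The paper does not pass through $\Psi^J$ or Proposition~\ref{P:equivalence} here at all. Since $b_p$ is extreme in $\scriptP_x$, the definition of $\rho$ already gives $\max_{\deg I = b_p}|\lambda_I(x)|\sim 2^{n(|b_p|_1-1)}$ directly. The paper then selects a $d$-tuple $I_x^\delta\in\scriptW_0^d$ of words with $\deg I_x^\delta = b_p$ maximizing $\delta^{v\cdot\deg I}|\lambda_I(x)|$, and works with the single-exponential Carnot--Carath\'eodory balls $B(x,\delta)=\Phi_x^\delta(B(1))$ from \eqref{E:def B}, whose volume and doubling are supplied by Street's theorem. The crux --- which your outline replaces by the unverified assertion that one can find Borel sets $E_j^x$ with $|E_j^x|\lesssim 2^{nc_j}\delta^{m_j}$ and with $\sum_j c_j/p_j=|b_p|_1$, $\sum_j m_j/p_j=v_0\cdot b_p$ --- is handled in the paper by a short coarea/doubling computation, \eqref{E:compare size}: using a smooth section $\sigma_j$ of $\pi_j$, the tangency of $X_j$ to the fibers of $\pi_j$, and the containment \eqref{E:size alphai}, one gets $|\pi_j(B(x,\delta))|\lesssim \delta^{-v^j}|B(x,\delta)|$. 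Feeding that into \eqref{E:M with mu} and using $1/p_j = b_p^j/(|b_p|_1-1)$ along with $|B(x,\delta)|\sim 2^{n(|b_p|_1-1)}\delta^{v\cdot b_p}$ produces $\mu(B(x,\delta))\lesssim A(\mu)\,2^n\,|B(x,\delta)|$ with no need to construct explicit $E_j^x$ or to verify weighted-exponent identities; the Vitali covering lemma (legitimate because the balls are doubling) finishes, with the $\eta$-approximation of $|\Omega'|$ by $|\bigcup B(x_j,\delta)|$ replacing your outer regularization.

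The genuine gap in your write-up is precisely the step you call ``the main obstacle.'' The projections $\pi_j\circ\Psi_x^J$ are only transparently controlled in the variable $t_i$ where $J_i=j$; in the remaining $d-1$ variables you need quantitative Jacobian bounds for a map $\R^{d-1}\to\R^{d-1}$, and it is not explained why these organize themselves into a clean product $\prod_j|E_j^x|^{1/p_j}\lesssim 2^n|B_x^\delta|$. Absent that, the restricted-strong-type hypothesis is never actually invoked with controlled sets. A secondary issue is the covering: Besicovitch requires Euclidean balls, and your reduction to ``admissible anisotropic linear maps'' presumes the boxes have admissibly bounded eccentricity, which is not automatic; Vitali with the doubling structure (already available from Street) is the right tool, and avoids this. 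If you want to salvage your route, the cleanest repair is to abandon the ad hoc $E_j^x$ and instead prove $|\pi_j(B_x^\delta)|\lesssim \delta^{-v_0^{j}}|B_x^\delta|$ by the same coarea argument the paper uses for $\Phi^\delta$, after first comparing $\Psi^J_x(R_\delta)$ to $\Phi_x^\delta(B(1))$ (the content of the rescaling in Lemma~\ref{L:dadetDPsiJ}); but at that point you have simply reconstructed the paper's proof with an extra detour through $\Psi^J$.
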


\begin{proof}[Proof of Lemma~\ref{L:mu(Z)=0}]
By Proposition~\ref{P:polytope prop}, there exist admissible, finite sets $\scriptA_i$, $i=1,\ldots,C_{p,d}$ such that $b_p \notin \scriptP(\scriptA_i)$ for any $i$ and for each $x \in Z$, there exists an $i$ such that $\scriptP_x \subseteq \scriptP(\scriptA_i)$.  For the remainder of the proof of the lemma, we let $\scriptA = \scriptA_i$ be fixed and define
$$
Z'  := \{x \in Z : \scriptP_x \subseteq \scriptP(\scriptA)\}.
$$
It suffices to show that $\mu(Z') = 0$.

Choose admissible $\eps>0$ and $v \in (\eps,1]^k$ such that
$$
v \cdot b_p + \eps < v \cdot b, \qtq{for} b \in \scriptP(\scriptA).
$$
Define
$$
\scriptW_0 := \{w \in \scriptW : v \cdot \deg w \leq d\}.
$$

Let $N=N_{d,p}$ be an integer whose size will be determined in a moment and which is, in particular, larger than $\frac{d}\eps$.  Since $\overline W$ is compact and contained in $U$, the $X_i$ are smooth on $U$, and $\{X_w : w \in \scriptW_0\}$ contains the coordinate vector fields, there exists $\delta_0 > 0$, depending on the $\pi_i$, $p$, and $W$, such that for all $0 < \delta \leq \delta_0$, $I \in \scriptW_0^d$ satisfying $\deg I \in \scriptP(\scriptA)$, $x \in W$, and $w, w' \in \scriptW_0$,
\begin{gather}\label{E:lambda I small}
|\delta^{v \cdot \deg I} \lambda_I(x)| < \delta^\eps \delta^{v \cdot b_p},\\ 
\label{E:deltaXw is small/smooth}
\|\delta^{v \cdot \deg w} X_w \|_{C^0(W)} \leq \tfrac1d \dist(V,\partial W), \qquad \|\delta^{v \cdot \deg w} X_w \|_{C^N(W)} \leq 1, \\ \notag
[\delta^{v \cdot \deg w} X_w, \delta^{v \cdot \deg w'} X_{w'}] = \sum_{\tilde w \in \scriptW_0} c_{w,w'}^{\tilde w,\delta} \delta^{v \cdot \deg \tilde w} X_{\tilde w}, 
\end{gather}
with
$$
\|c_{w,w'}^{\tilde w,\delta}\|_{C^N(W)} \lesssim 1.
$$
We omit the details since they are essentially the same as arguments found in the proof of Lemma~\ref{L:Ys}.

For $x \in Z'$ and $0 < \delta \leq \delta_0$, choose $I_x^\delta \in \scriptW_0^d$ such that
$$
\delta^{v \cdot \deg I_x^\delta} |\lambda_{I_x^\delta}(x)| = \max_{I \in \scriptW_0^d} \delta^{v \cdot \deg I} |\lambda_I(x)|.
$$
Let 
\begin{equation} \label{E:def B}
\begin{gathered}
\Phi_x^\delta(t_1,\ldots,t_d) := \exp(t_1 \delta^{v \cdot \deg w_1} X_{w_1} + \cdots + t_d \delta^{v \cdot \deg w_d} X_{w_d})(x)\\
B(x,\delta) := \{\Phi_x^\delta(t) : |t| < 1\},
\end{gathered}
\end{equation}
where $I_x^\delta = (w_1,\ldots,w_d)$.  Then $B(x,\delta) \subseteq W$ by \eqref{E:deltaXw is small/smooth} and the fact that $x \in Z' \subseteq V$.

By the results of \cite{Street}, provided $N=N_{d,p}$ is sufficiently large, these balls are doubling in the sense that $|B(x,\delta)| \sim |B(x,2\delta)|$, for all $x \in Z'$ and $0 < \delta \leq \delta_0$.  (Here we are using the fact that $\eps$ and $v$ are admissible.)  Furthermore, for $x \in V$,
\begin{gather} \label{E:size Bz}
|B(x,\delta)| \sim \delta^{v \cdot \deg I_x^\delta}|\lambda_{I_x^\delta}(x)|\\
\label{E:size alphai}
\exp(tX_i)(y) \in B(x,C\delta)\qtq{whenever} y \in B(x,\delta), \quad |t| < \delta^{v^i},
\end{gather}
where $C=C_{d,p}$.  By the doubling property, the change of variables formula, and \eqref{E:size alphai}, if $\sigma_i:\pi_i(W) \to \R^d$ is any smooth section of $\pi_i$ (i.e.\ $\sigma_i \circ \pi_i$ is the identity), with $\sigma_i(\pi_i(V)) \subseteq W$, 
\begin{equation} \label{E:compare size}
\begin{aligned}
|B(x,\delta)| &\sim |B(x,C\delta)| = \int_{\pi_i(B(x,C\delta))} \int_\R \chi_{B(x,C\delta)}(e^{tX_i}(\sigma_i(y))\, dt\, dy\\
& \geq  \int_{\pi_i(B(x,\delta/2))} \int_\R \chi_{B(x,C\delta)}(e^{tX_i}(\sigma_i(y)))\, dt \, dy \gtrsim  \delta^{v^i} |\pi_i(B(x,\delta))|.
\end{aligned}
\end{equation}

By the Vitali covering lemma (as stated in \cite{BigStein}, for instance), for each $0 < \delta  \leq \delta_0$, there exists a collection of points $\{x_j\}_{j=1}^{M_\delta} \subseteq Z'$ such that $Z' \subseteq \bigcup_{j=1}^{M_\delta} B(x_j,\delta)$ and such that the balls $B(x_j,C^{-1}\delta)$ are pairwise disjoint.  
By this, \eqref{E:M with mu} and the fact that $\chi_{B(x_j,\delta)} \leq \prod_{i=1}^k \chi_{\pi_i(B(x_j,\delta))} \circ \pi_i$, \eqref{E:compare size}, \eqref{E:size Bz} and the definition of $b_p$, the doubling property and \eqref{E:lambda I small}, and finally, disjointness of the $B(x_j,\delta)$, 
\begin{align*}
\mu(Z') &\leq \sum_{j=1}^{M_\delta} \mu(B(x_j,\delta)) \leq A(\mu) \sum_j \prod_{i=1}^k |\pi_i(B(x_j,\delta))|^{\frac1{p_i}} \\
&\lesssim A(\mu)  \sum_j |B(x_j,C\delta)|^{\sum_i \frac1{p_i}} \prod_i \delta^{-\frac{v^i}{p_i}}\\
&\sim A(\mu) \sum_j |B(x_j,C\delta)| (\delta^{v \cdot \deg I_{x_j}^\delta - v \cdot b_p}|\lambda_{I_{x_j}^\delta}(x_j)|)^{\sum_i \frac1{p_i}-1 }\\
&\lesssim A(\mu)\sum_j |B(x_j,C^{-1}\delta)|\delta^{\eps(\sum_i \frac1{p_i}-1)} \leq A(\mu) |W| \delta^{\eps(\sum_i \frac1{p_i}-1)}.
\end{align*}
The lemma follows by sending $\delta$ to 0.  
\end{proof}

\begin{proof}[Proof of Lemma~\ref{L:mu(O')}] The proof is similar to that of Lemma~\ref{L:mu(Z)=0}.  Fix $n$ and $\Omega' \subseteq \Omega_n$.  Let $x \in \Omega'$.  Since $\Omega' \subseteq \Omega$, $b_p$ is an extreme point of $\scriptP_x$.  By the definition of $\rho$, $\max_{\deg I = b_p} |\lambda_I(x)| \sim 2^{n(|b_p|_1-1)}$.  

By Proposition~\ref{P:polytope prop} and a covering argument, we may assume that there exists a finite set $\scriptA \subseteq \Z_0^k$ such that $b_p \notin \scriptP(\scriptA)$ and for each $x \in \Omega'$, $\scriptP_x \subseteq \scriptP(\scriptA \cup \{b_p\})$.  Choose $\eps > 0$, $v \in (\eps,1]^k$ such that $v \cdot b_p + \eps < v \cdot b$ for each $b \in \scriptP(\scriptA \cup \{b_p\})\cap \Z_0^k \setminus \{b_p\}$, and let 
$$
\scriptW_0 := \{w \in \scriptW : v \cdot \deg w \leq d\}.
$$
Since $(1,\ldots,1,0,\ldots,0) \in \scriptP_x$ for each $x \in U$, $(1,\ldots,1,0,\ldots,0) \in \scriptP(\scriptA\cup\{b_p\})$.  Therefore $v \cdot b_p \leq \sum_{i=1}^d v^i \leq d$, so $\deg I = b_p$ implies that $I \in \scriptW_0^d$.  

Let $N = N_{d,p}$ be a large integer.  As before, there exists $\delta_n > 0$, which depends on $n$, the $\pi_i$, and on $p$, such that for all $0 < \delta \leq \delta_n$, $x \in \Omega'$, $I \in \scriptW_0^d$ with $\deg I \neq b_p$, and $w,w' \in \scriptW_0$,  
\begin{gather*}
|\delta^{v \cdot \deg I} \lambda_I(x)| < \delta^\eps \max_{\deg I' = b_p} \delta^{v \cdot \deg I'}|\lambda_{I'}(x)|, \\
\|\delta^{v \cdot \deg w}X_w\|_{C^0(W)} \leq \tfrac1d \dist(V,\partial W), \qquad \|\delta^{v \cdot \deg w} X_w\|_{C^N(W)} \leq 1,\\
[\delta^{v \cdot \deg w}X_w, \delta^{v \cdot \deg w'} X_{w'}] = \sum_{\tilde w \in \scriptW_0} c_{w,w'}^{\tilde w,\delta} \delta^{v \cdot \deg \tilde w} X_{\tilde w},
\end{gather*}
with 
$$
\|c_{w,w'}^{\tilde w,\delta} \|_{C^N(W)} \leq C_{d,p},
$$
for all $w, w' \in \scriptW_0$.  In particular, we may choose $\delta_n$ sufficiently small that for each $x \in \Omega'$ and $0 < \delta \leq \delta_n$, there exists a $d$-tuple $I_x^\delta \in \scriptW_0^d$ such that $\deg I_x^\delta = b_p$ and
$$
\delta^{v \cdot \deg I_x^\delta} |\lambda_{I_x^\delta}(x)| = \max_{I \in \scriptW_0^d} \delta^{v \cdot \deg I} |\lambda_I(x)| \sim \delta^{v \cdot b_p} 2^{n(|b_p|_1-1)}.
$$
Thus, considering the balls $B(x,\delta)$ (defined in \eqref{E:def B}) for $x \in \Omega'$ and $0 < \delta \leq \delta_n$,
$$
|B(x,\delta)| \sim 2^{n(|b_p|_1-1)}\delta^{v \cdot b_p} = 2^{\frac n{\sum_i \frac1{p_i}-1}} \delta^{v \cdot b_p}.
$$

Since the balls $B(x,\delta)$ are doubling, for each $\eta > 0$ there exist a collection $\{x_j\}_{j=1}^{M_\delta} \subseteq \Omega'$ and a parameter $0 < \delta \leq \delta_n$ such that
$$
\Omega' \subseteq \bigcup_{j=1}^{M_\delta}B(x_j,\delta), \qquad |\bigcup_{j=1}^{M_\delta} B(x_j,\delta)| \leq |\Omega'|+\eta,
$$
and such that the $B(x_j,C^{-1}\delta)$ are pairwise disjoint.  

Arguing as in the proof of Lemma~\ref{L:mu(Z)=0},
\begin{align*}
\mu(\Omega') &\leq \sum_{j=1}^{M_\delta} \mu(B(x_j,\delta))
 \lesssim A(\mu) \sum_j |B(x_j,\delta)| |B(x_j,\delta)|^{\sum_i \frac1{p_i}-1} \delta^{-v \cdot b_p(\sum_i \frac1{p_i}-1)} \\
&\sim A(\mu)\sum_j |B(x_j,\delta)| 2^n \lesssim A(\mu) 2^n(|\Omega'| + \eta).
\end{align*}
Letting $\eta \to 0$ completes the proof.
\end{proof}

\subsection*{Remarks} The pointwise upper bound \eqref{E:mu lesssim rho} is false if no assumptions are made on $b_p$.  Indeed, if $b_p$ lies in the interior of $\scriptP_{x_0}$, then for some $\theta < 1$, $b_{\theta p}$ lies in the interior of $\scriptP_{x_0}$, where $\theta p = (\theta p_1,\ldots,\theta p_k)$.  Thus for some neighborhood $U$ of $x_0$, $b_{\theta p}$ lies in the interior of $\scriptP_x$ for every $x \in U$.  Hence by the main result in \cite{BSrevista}, if $a$ is continuous with compact support in $U$, 
$$
|\int \prod_{j=1}^k f_j \circ \pi_j (x) \, a(x)\, dx| \lesssim \prod_{j=1}^k \|f_j\|_{L^{\theta p_j}}.
$$
Additionally, 
$$
|\int \prod_{j=1}^k f_j \circ \pi_j(x) \, |\log|x-x_0|| \, a(x)\, dx| \lesssim \prod_{j=1}^k \|f_j\|_{L^{\infty}}.
$$
Thus by interpolation, 
$$
|\int \prod_{j=1}^k f_j \circ \pi_j(x) \, |\log|x-x_0||^{1-\theta} \, a(x)\, dx| \lesssim \prod_{j=1}^k \|f_j\|_{L^{p_j}}.
$$

For the unweighted bilinear operator in the `polynomial-like' case, the endpoint restricted weak type bounds are known and are due to Gressman in \cite{GressIMRN}; in the multilinear case, the corresponding estimates follow by combining his techniques with arguments in \cite{BSrevista}.  The deduction of endpoint bounds from the arguments in \cite{GressIMRN} does not seem to be immediate in the weighted case, and so these questions remain open except for certain special configurations (such as convolution or restricted X-ray transform along polynomial curves).


\section{Proof of the main theorem:  Theorem~\ref{T:main}} \label{S:main}


In this section, undecorated constants and implicit constants ($C,c,\lesssim, \gtrsim,\sim$) will be allowed to depend on a cutoff function $a$ (specifically, on upper bounds for $\diam(\supp a)$ and $\|a\|_{L^\infty}$), a point $b_0 \in \Z_0^k$, and exponents $p_1,\ldots,p_k$ (all of which will be given in a moment), as well as the $\pi_j$.  Other parameters (namely, $\eps,\delta,N$) that depend on $b_0,p_1,\ldots,p_k$ will arise later on, so implicit constants may depend on these quantities as well.  Unless otherwise stated, decorated constants and implicit constants ($c_d$, $\lesssim_{N,d}$, etc.) will only be allowed to depend on the objects in their subscripts.  

Let $J_0 \in \{1,\ldots,k\}^d$ and for $x \in U$, define $\Psi^{J_0}_x(t)$ as in \eqref{E:def Psi}.  Let $\beta_0$ be a multiindex, and define $b_0 := \deg J_0 + \deg_{J_0} \beta_0$.  Let
\begin{equation} \label{E:def tilde rho}
\tilde \rho(x) := |\partial_t^{\beta_0}|_{t = 0} \det D_t\Psi_x^{J_0}(t)|^{\frac1{|b_0|_1-1}}.
\end{equation}
Let $a$ be continuous and compactly supported in $U$, and define the multilinear form
$$
\widetilde M(f_1,\ldots,f_k) := \int_{\R^d} \prod_{j=1}^k f_j \circ \pi_j (x) \, \tilde\rho(x) \,a(x) \, dx.
$$

In light of Proposition~\ref{P:equivalence}, the following more general (we need not assume that $b_0$ is extreme) result implies Theorem~\ref{T:main}.  

\begin{theorem} \label{T:not extreme}  
Let $(p_1,\ldots,p_k) \in [1,\infty)^k$ satisfy $(p_1^{-1},\ldots,p_k^{-1}) \prec \mathbf q(b_0)$, with $p_i^{-1} < \mathbf q_i(b_0)$ when $b_0^i \neq 0$.  Then
\begin{equation} \label{E:not extreme}
|\widetilde M(f_1,\ldots,f_k)|  \lesssim \prod_{j=1}^k \|f_j\|_{L^{p_j}},
\end{equation}
for all continuous $f_1,\ldots,f_k$.
\end{theorem}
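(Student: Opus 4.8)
The plan is to adapt Christ's method of refinements, as developed in \cite{ChRl, TW, BSrevista}, to the present (possibly non-H\"ormander) setting, using the rescaling constructed in the proof of Proposition~\ref{P:equivalence} to reduce every step to a unit-scale situation.

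I would begin with the standard reductions. Normalizing $a \geq 0$ and, via a partition of unity, taking $\supp a$ to lie in a small ball, I note that since $a$ has compact support and the $\pi_j$ are submersions, $f_j$ enters only through its restriction to the bounded set $\pi_j(\supp a)$, so $\|f_j\|_{L^{p}} \lesssim \|f_j\|_{L^{\tilde p}}$ there whenever $p \leq \tilde p$; it therefore suffices to treat a single exponent tuple with $p_j^{-1}$ slightly below $\mathbf q_j(b_0)$ for each $j$ with $b_0^j \neq 0$ (the indices with $b_0^j = 0$ force $p_j = \infty$, and those factors are harmless). Interpolating against the trivial bound $|\widetilde M(f_1,\ldots,f_k)| \leq \|\tilde\rho a\|_1 \prod_j \|f_j\|_\infty$, I reduce further to a restricted weak-type inequality $\widetilde M(\chi_{E_1},\ldots,\chi_{E_k}) \lesssim \prod_{b_0^j \neq 0} |E_j|^{\theta_j}$ with each $\theta_j < \mathbf q_j(b_0)$, for bounded Borel sets $E_j$. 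Writing $\Omega := \{x \in \supp a : \pi_j(x) \in E_j \text{ for all } j\}$ and decomposing dyadically in the weight, $\Omega_\lambda := \Omega \cap \{\lambda < \tilde\rho \leq 2\lambda\}$ (dyadic $\lambda \lesssim 1$), we have $\widetilde M(\chi_{E_1},\ldots,\chi_{E_k}) \lesssim \sum_\lambda \lambda|\Omega_\lambda|$, and the goal becomes a per-scale estimate $\lambda|\Omega_\lambda| \lesssim \lambda^{c\eps}\prod_{b_0^j \neq 0}|E_j|^{\theta_j}$ with an admissible $c > 0$, after which summing the geometric series in $\lambda$ completes the proof.

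For a fixed $\lambda$ with $\Omega_\lambda \neq \emptyset$ we have $|\partial_t^{\beta_0}\det D_t\Psi_x^{J_0}(0)| \sim \lambda^{|b_0|_1 - 1}$ on $\Omega_\lambda$. As in Lemma~\ref{L:Ys}, I would introduce an anisotropic dilation $t \mapsto (\delta^{s_1}t_1, \ldots, \delta^{s_d}t_d)$ of the flow variables (the exponents $s_i$ dictated by $J_0$ and by the admissible weight $v_0$ from Proposition~\ref{P:polytope prop}) together with the correspondingly rescaled pullbacks of the $X_j$; since the $C^m$ norms of the $X_w$ on $\supp a$ are bounded independently of $\lambda$, after rescaling $\Psi_x^{J_0}$ becomes a unit-scale object, uniformly in $x \in \Omega_\lambda$ and $0 < \delta \leq \delta_0$ with $\delta_0$ independent of $\lambda$. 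A Taylor-expansion argument --- the converse of the one in the proof of Lemma~\ref{L:dadetDPsiJ}, run on the rescaled fields --- then yields $|\det D_t\Psi_x^{J_0}(t)| \gtrsim \delta^{|\beta_0|_1}\lambda^{|b_0|_1 - 1}$ on a subset of $\{|t| < \delta\}$ of measure $\gtrsim \delta^d$.

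The heart of the matter, and the step I expect to be the main obstacle, is the inflation/counting argument. Following \cite{ChRl, TW, BSrevista}, I would pass to a bounded number of successive refinements $\Omega_\lambda = \Omega^{(0)} \supseteq \cdots \supseteq \Omega^{(d)}$, each retaining a fixed proportion of $|\Omega_\lambda|$ and arranged so that, for $x \in \Omega^{(d)}$ and $t$ in a large portion of $\{|t| < \delta\}$, every partial composition $\exp(t_i X_{(J_0)_i})\circ\cdots\circ\exp(t_1 X_{(J_0)_1})(x)$ lands in $\pi_{(J_0)_i}^{-1}(E_{(J_0)_i})$; this uses that each $X_j$ is tangent to the fibers of $\pi_j$, so $\exp(tX_j)$ preserves $\pi_j$. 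Combining a change of variables along $t \mapsto \Psi_x^{J_0}(t)$, the Jacobian lower bound above, and a combinatorial count of the multiplicities with which the sets $E_j$ are encountered, I would extract an inequality relating $\lambda|\Omega_\lambda|$, the $|E_j|$, and $\delta$, whose extremal balance in $\delta$ is exactly governed by the exponent $\tfrac1{|b_0|_1 - 1}$ appearing in $\tilde\rho$; because we are working at exponents $\theta_j$ strictly below the endpoint $\mathbf q_j(b_0)$, optimizing $\delta$ over its ($\lambda$-uniform) admissible range leaves a net positive power $\lambda^{c\eps}$ of room, which is what makes the sum over $\lambda$ converge. The delicate points are that the refinements and the counting must be carried out with constants that do not deteriorate as $\lambda \to 0^+$ --- ensured by the prior reduction to unit scale --- and that one must control the error terms from the Taylor approximation of $\det D\Psi_x^{J_0}$ when transferring the non-degeneracy from the $\delta$-ball back to the original coordinates; these are precisely the modifications needed to handle the potential failure of the H\"ormander condition that are absent from \cite{TW, BSrevista}.
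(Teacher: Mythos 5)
Your high-level architecture (restricted weak-type reduction, dyadic decomposition in the weight, method of refinements, change of variables along the iterated flow) matches the paper's, but you have left out the single most consequential technical ingredient and replaced a key reduction with a different one that you do not verify. The paper never rescales to unit scale via the Carnot--Carath\'eodory dilations of Lemma~\ref{L:Ys}; those appear only in Sections~\ref{S:equivalence} and~\ref{S:optimality}. Instead, in Section~\ref{S:main} the paper localizes $\Omega$ to a set of diameter $\lesssim c\,\alpha_1^\delta$ (a covering--pigeonhole step) precisely so that the degree-$N$ Taylor polynomial $\Psi^N_{x_0}$ of $\Psi^{J_0}_{x_0}$ satisfies $\|\Psi^{J_0}_{x_0}-\Psi^N_{x_0}\|_{C^2(Q_w)}\lesssim (c\alpha_1^\delta)^N$, which for $N$ large can be made smaller than the square of the Jacobian lower bound. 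This polynomial approximation is then fed into Christ's Lemma~7.1 (Lemma~\ref{L:Poly like} in the paper), whose content is that $\Psi^{J_0}_{x_0}$ is \emph{boundedly finite-to-one} on the relevant set, because its polynomial approximant $\Psi^N_{x_0}$ is at most $C_{N,d}$-to-one by Bezout's theorem. Without this finite-to-one control, the inequality $|\Psi^{J_0}_{x_0}(F_d)| \gtrsim \int_{F_d}|\det D\Psi^{J_0}_{x_0}|$ that closes the argument has no justification; your ``change of variables'' step is where the proposal breaks.

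Two further, related gaps. First, the claim that $|\det D_t\Psi^{J_0}_x(t)|\gtrsim \delta^{|\beta_0|_1}\lambda^{|b_0|_1-1}$ on a subset of $\{|t|<\delta\}$ of measure $\gtrsim\delta^d$ is not the statement you need: you need the Jacobian lower bound on a large fraction of the \emph{refined tower} $F_d$, which is a very thin set (its measure is $\rho_0^{-d}\alpha_1^{C\eps}\alpha^{\deg J_0}$, far smaller than $\delta^d$). This transfer is exactly what the central-set machinery (Lemma~\ref{L:central refinement}) and Lemma~\ref{L:F poly good} accomplish: because $F_d$ is a tower of \emph{central} sets, a polynomial of bounded degree is comparable to its sup norm on most of $F_d$, so the equivalence-of-norms lower bound $\|\det D\Psi^N_{x_0}\|_{C^0(Q_w)}\gtrsim w^{\beta_0}\lambda_0$ propagates to $F_d$. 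Your proposal does not mention centrality and asserts ``fixed proportion'' retention at each refinement step, which is not what happens (one loses a factor $\alpha_1^{C\eps}$ each time; this is tolerable precisely because the exponents $p_i^{-1}$ are strictly interior). These are not cosmetic details: the central structure, the polynomial approximation, and the Bezout finiteness are the specific mechanisms that replace the failing H\"ormander condition, and the proof does not go through without them.
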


Since $J_0$ and $\beta_0$ are fixed, we will henceforth drop the tildes from our notation, with the understanding that we are using \eqref{E:def tilde rho} instead of \eqref{E:def rho} to define $\rho$.  

It suffices to prove \eqref{E:not extreme} when the $f_j$ are nonnegative.  Suppose that $b_j = 0$ for some $j$.  Then $\pi_j$ plays no role in the definition of $\rho$, and $p_j = \infty$, so by H\"older's inequality, we may ignore $f_j$ entirely.  Thus we may assume that $b_j \neq 0$ for each $j$.  In fact, we may assume that for each $j$, $p_j < \infty$ since $\|f_j\|_{L^{p_j}(\pi_j(\supp a))} \lesssim \|f_j\|_{L^\infty}$, by the compact support of $a$.  

We only claim a non-endpoint result, so by real interpolation with the trivial (by H\"older) inequalities of the form
$$
M(f_1,\ldots,f_k) \lesssim \prod_{j=1}^k \|f_j\|_{L^{\tilde p_j}}, \qquad \sum_{j=1}^k p_j^{-1} \leq 1,
$$
it suffices to prove that for all Borel sets $E_1,\ldots,E_k$ and some sufficiently small $\eps > 0$,
\begin{equation} \label{E:rwt 1}
\int_{\R^d} \prod_{j=1}^k \chi_{E_j} \circ \pi_j(x)\rho(x)a(x)\, dx \lesssim \prod_{j=1}^k |E_j|^{\mathbf q_j(b_0)-\eps}.
\end{equation}

Letting $\Omega := \supp a \cap \bigcap_{j=1}^k \pi_j^{-1}(E_j)$, \eqref{E:rwt 1} will follow from
\begin{equation} \label{E:rwt 2}
\rho(\Omega) \lesssim \prod_{j=1}^k |\pi_j(\Omega)|^{\mathbf q_j(b_0)-\eps}.
\end{equation}
If we define
\begin{equation} \label{E:define alphas}
\alpha_j := \frac{\rho(\Omega)}{|\pi_j(\Omega)|},
\end{equation}
a bit of arithmetic shows that \eqref{E:rwt 2} is equivalent to
$$
\prod_{j=1}^k \alpha_j^{\mathbf q_j(\mathbf q(b_0) - (\eps,\ldots,\eps))} \lesssim \rho(\Omega),
$$
which in turn would be implied by 
\begin{equation} \label{E:alpha lb 1}
\prod_{j=1}^k \alpha_j^{b_0^j+\eps} \lesssim \rho(\Omega),
\end{equation}
with a slightly smaller $\eps$.  (We recall that $\mathbf q$ equals its own inverse.)    

By the coarea formula,
\begin{equation} \label{E:coarea}
\alpha_j = |\pi_j(\Omega)|^{-1} \int_{\pi_j(\Omega)} \int_{\pi_j^{-1}\{y\}} \chi_\Omega(x) \rho(x) \tfrac{1}{|X_j(x)|} \, d\mathcal{H}^1(x)\, dy.
\end{equation}
Since $\pi_j$ is a submersion, $|X_j| \gtrsim 1$ and $\mathcal H^1(\pi_j^{-1}\{y\}) \lesssim 1$ for all $y \in \pi_j(\Omega)$.  Since $\rho \lesssim 1$ by smoothness of the $\pi_j$, \eqref{E:coarea} implies that
\begin{equation} \label{E:alpha lesssim diam}
\alpha_j \lesssim \diam(\Omega) \leq \diam(\supp a).
\end{equation}
By taking a partition of unity, we may assume that the $\alpha_j$ are as small as we like, in particular, that they are smaller than $\tfrac12$.  Reordering if necessary, $\alpha_1 \leq \cdots \leq \alpha_k$.  

For $n \in \Z$, let $\Omega_n = \{x \in \Omega : 2^n \leq \rho(x) < 2^{n+1}\}$.  Then for $C$ sufficiently large, $\Omega_n = \emptyset$ for all $n > C$.  On the other hand, since $\pi_1$ is a submersion and $\supp a$ is compact,
$$
\sum_{n \leq \log \alpha_1 - C} \rho(\Omega_n) \lesssim \sum_{n \leq \log \alpha_1 - C} 2^n |\pi_1(\Omega)| \lesssim 2^{-C}\alpha_1 |\pi_1(\Omega)| = 2^{-C}\rho(\Omega).
$$
Thus for $C$ sufficiently large, 
$$
\rho(\bigcup_{n \leq \log \alpha_1 - C} \Omega_n) < \tfrac12 \alpha_1 |\pi_1(\Omega)| = \tfrac12 \rho(\Omega).
$$
By pigeonholing, there exists $n$ with $\log \alpha_1 - C \leq n \leq C$ such that 
\begin{equation} \label{E:lb rho On}
\rho(\Omega_n) \geq (2(|\log\alpha_1|+2C))^{-1} \rho(\Omega) \gtrsim \alpha_1^\eps \rho(\Omega).
\end{equation}
Define
$$
\alpha_{n,j} := \tfrac{\rho(\Omega_n)}{|\pi_j(\Omega_n)|}, \qquad j=1,\ldots,k.
$$
By \eqref{E:lb rho On} and the triviality $\rho(\Omega_n) \leq \rho(\Omega)$, together with the proof of \eqref{E:alpha lesssim diam} and the small diameter of $\supp a$,
$$
\alpha_1^\eps \alpha_j \lesssim \alpha_{n,j} \leq \tfrac12.
$$
Therefore \eqref{E:alpha lb 1} follows from
\begin{equation} \label{E:alpha lb 2}
\rho(\Omega_n) \gtrsim \prod_{j=1}^k (\alpha_{n,j})^{b_0^j+\eps},
\end{equation}
with a slightly smaller value of $\eps$.  Henceforth, we let $\rho_0:= 2^n$ (for this value of $n$) and drop the $n$'s from the notation in \eqref{E:alpha lb 2}.  We note that $\rho(\Omega) \sim \rho_0 |\Omega|$.  Reordering again, we may continue to assume that $\alpha_1 \leq \ldots \leq \alpha_k$.  

Let $\delta > 0$ be a small constant (depending on $\eps, b_0, d$), which will be determined later on.  Cover $\Omega$ by $c_d \alpha_1^{-\delta d}$ balls of radius $\alpha_1^\delta$.  By pigeonholing, there exists $\Omega' \subseteq \Omega$ with 
$$
\rho(\Omega') \gtrsim \alpha_1^{\delta d} \rho(\Omega).
$$
Arguing as above, the parameters $\alpha_j':= |\pi_j(\Omega')|^{-1}\rho(\Omega')$ satisfy
\begin{equation} \label{E:alphaj'}
 \alpha_1^{1+\delta d} \leq \alpha_1^{\delta d}\alpha_j \lesssim \alpha_j' \lesssim \diam(\Omega') \leq \alpha_1^\delta.
\end{equation}
Thus for $\delta$ sufficiently small, \eqref{E:alpha lb 2} would follow from
$$
\rho(\Omega') \gtrsim \prod_{j=1}^k (\alpha_j')^{b_0^j+\eps},
$$
with a slightly smaller value of $\eps$.  

Since $\alpha_j' \lesssim \diam (\supp a)$, we may assume that the $\alpha_j'$ are as small (depending on the $\pi_j$, $\eps$, $\delta$),  as we like.  Thus \eqref{E:alphaj'} implies that for each $1 \leq j \leq k$,  
$$
\diam(\Omega') \leq c (\alpha_j')^\delta,
$$
for some slightly smaller value of $\delta$, and with $c$ as small as we like.  By the same argument as for \eqref{E:alpha lesssim diam},
$$
\alpha_j' \lesssim \rho_0 \diam(\Omega') \lesssim \rho_0 (\alpha_j')^\delta,
$$
whence $\rho_0 \geq c^{-1} (\alpha_j')^{1-\delta}$, again with a slightly smaller value of $\delta$.  

In summary, to complete the proof of Theorem~\ref{T:not extreme} (and thereby that of Theorem~\ref{T:main}) it suffices to prove the following.  
          
\begin{lemma} \label{L:reductions}
Let $\eps>0$ be sufficiently small depending on $b_0$ and $\delta>0$ be sufficiently small depending on $\eps, b_0$.  Let $\Omega \subseteq \supp a$ be a Borel set, and define $\alpha_1,\ldots,\alpha_k$ as in \eqref{E:define alphas}.  Assume that $\alpha_1 \leq \ldots \leq \alpha_k$, that
$$
\rho_0 \leq \rho(x) \leq 2\rho_0 \qtq{for all} x \in \Omega,
$$
and that 
\begin{equation} \label{E:on Omega}
\alpha_k < c, \qquad \rho_0 \geq c^{-1} \alpha_k^{1-\delta}, \qquad \diam(\Omega) \leq c \alpha_1^\delta.
\end{equation}
Then for $c$ sufficiently small, depending on the $\pi_j$, $b_0, \eps, \delta$, we have
\begin{equation} \label{E:alpha lb}
\prod_{j=1}^k \alpha_j^{b_0^j+\eps} \lesssim \rho(\Omega).
\end{equation}
\end{lemma}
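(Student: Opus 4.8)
The plan is to follow the scheme of \cite{ChRl, TW, BSrevista}: a refinement step that makes the ``fiber densities'' uniform, then an inflation step in which one flows out of a well-chosen point along $X_{(J_0)_1},\ldots,X_{(J_0)_d}$, and finally a lower bound for the Jacobian of the resulting flow map. Since $\rho\sim\rho_0$ on $\Omega$ we have $\rho(\Omega)\sim\rho_0|\Omega|$, so \eqref{E:alpha lb} is equivalent to $|\Omega|\gtrsim\rho_0^{-1}\prod_j\alpha_j^{b_0^j+\eps}$. The genuinely new point is that, since the H\"ormander condition may fail, the flow map $\Psi^{J_0}_{x_0}$ can be very degenerate, and its Jacobian must be quantified through the very Taylor coefficient used to define $\rho$; the normalizations in \eqref{E:on Omega} are exactly what make this uniform.

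First I would refine. Writing $d\sigma_j := \rho\,|X_j|^{-1}\,d\mathcal{H}^1$ on the fibers of $\pi_j$, the coarea formula \eqref{E:coarea} says $\sigma_j(\pi_j^{-1}\{y\}\cap\Omega)$ has average $\alpha_j$ over $y\in\pi_j(\Omega)$, so a standard iterated refinement (as in \cite{TW}) yields $\Omega^*\subseteq\Omega$ with $\rho(\Omega^*)\gtrsim\alpha_1^{C\eps}\rho(\Omega)$ on which every fiber through every point of $\Omega^*$ carries $\sigma_j$-mass $\gtrsim\alpha_j$ inside $\Omega$; as $\rho\sim\rho_0$ and $|X_j|\sim1$ there, this reads $\mathcal{H}^1(\pi_j^{-1}(\pi_j(x))\cap\Omega)\gtrsim\alpha_j/\rho_0$. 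Because \eqref{E:on Omega} forces $\alpha_j/\rho_0\le\alpha_k/\rho_0\le c\,\alpha_k^\delta\ll1$ and $\diam\Omega\le c\,\alpha_1^\delta$, flowing along $X_{(J_0)_\ell}$ out of a point of $\Omega^*$ for time $\tau_\ell\sim\alpha_{(J_0)_\ell}/\rho_0$ stays inside $\Omega$; a further round of refinements (again as in \cite{TW, BSrevista}) arranges that the composed flow $\Psi^{J_0}_x$ does so as well for $x$ in a good subset, at the cost of another factor $\alpha_1^{C\eps}$. Since $\eps$ is free, such losses are absorbed at the end.

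Next, the inflation machinery of \cite{TW, BSrevista}, suitably adapted, yields for a generic point $x_0$ of the refined set
\[
|\Omega|\ \gtrsim\ \int_{Q}\bigl|\det D_t\Psi^{J_0}_{x_0}(t)\bigr|\,dt,\qquad Q:=\prod_{\ell=1}^d[0,\tau_\ell],\quad\tau_\ell\sim\alpha_{(J_0)_\ell}/\rho_0,
\]
so it remains to show $\int_Q|\det D_t\Psi^{J_0}_{x_0}(t)|\,dt\gtrsim\rho_0^{-1}\prod_j\alpha_j^{b_0^j}$. For this I would Taylor-expand $t\mapsto\det D_t\Psi^{J_0}_{x_0}(t)$ about $0$ to a large admissible order $N=N(d,b_0,\delta)$, obtaining a polynomial $P$ whose coefficient of $t^{\beta_0}$ is $(\beta_0!)^{-1}\partial_t^{\beta_0}|_{t=0}\det D_t\Psi^{J_0}_{x_0}$, which by \eqref{E:def tilde rho} has size $\sim\rho_0^{|b_0|_1-1}$. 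The elementary non-concentration inequality for polynomials of degree $\le N$ (equivalence of norms after rescaling $Q$ anisotropically to the unit cube) then gives
\[
\int_Q|P|\ \gtrsim\ \Bigl(\prod_{\ell}\tau_\ell\Bigr)\,\rho_0^{|b_0|_1-1}\prod_{\ell}\tau_\ell^{(\beta_0)_\ell}\ =\ \rho_0^{|b_0|_1-1}\prod_{\ell}\tau_\ell^{1+(\beta_0)_\ell}\ \sim\ \rho_0^{-1}\prod_j\alpha_j^{b_0^j},
\]
using $|b_0|_1=\sum_\ell(1+(\beta_0)_\ell)$ and $b_0^j=\sum_{\ell:\,(J_0)_\ell=j}(1+(\beta_0)_\ell)$. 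The Taylor remainder over $Q$ is $O\bigl((\prod_\ell\tau_\ell)(\max_\ell\tau_\ell)^{N+1}\bigr)$, and since $\max_\ell\tau_\ell\sim\alpha_k/\rho_0\le c\,\alpha_k^\delta$ while $\rho_0\ge c^{-1}\alpha_k^{1-\delta}$, taking $N$ large (depending on $d,b_0,\delta$) makes it negligible against the main term; $\diam\Omega\le c\,\alpha_1^\delta$ guarantees the relevant $t$ stay in a fixed neighborhood where the Taylor estimate holds. Thus $|\Omega|\gtrsim\rho_0^{-1}\prod_j\alpha_j^{b_0^j}\ge\rho_0^{-1}\prod_j\alpha_j^{b_0^j+\eps}$ since each $\alpha_j\le1$, and reinstating the refinement losses with a slightly smaller $\eps$ gives \eqref{E:alpha lb}.

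The main obstacle is the inflation step without the H\"ormander condition: when the $X_i$ are degenerate the Carnot--Carath\'eodory-type balls swept out by the flows collapse, and the combinatorial ``band structure'' underlying the inflation arguments of \cite{TW, BSrevista} must be re-derived with uniform constants. The role of the weight $\rho$ and of the hypotheses $\alpha_k<c$, $\rho_0\ge c^{-1}\alpha_k^{1-\delta}$, $\diam\Omega\le c\,\alpha_1^\delta$ is precisely to restore, on the relevant scales $\tau_\ell$, enough nondegeneracy for that argument to run with constants depending only on the $\pi_j$, $b_0$, $\eps$, $\delta$. Once the inflation lemma is available in this generality, the Jacobian estimate above — where the choice of $\rho$ pays off, and where the non-concentration inequality dispenses with any extremality assumption on $b_0$ — finishes the proof.
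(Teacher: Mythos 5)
Your overall shape is the same as the paper's: iterated refinement of fibers, flow out of a generic point $x_0$ along $X_{(J_0)_1},\ldots,X_{(J_0)_d}$, lower bound the Jacobian by Taylor-expanding $\det D_t\Psi^{J_0}_{x_0}$ and isolating the $t^{\beta_0}$ coefficient (which is $\sim\rho_0^{|b_0|_1-1}$ by \eqref{E:def tilde rho}), and control the Taylor remainder via \eqref{E:on Omega}. The final arithmetic is exactly the paper's.

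There is, however, a genuine gap in the inflation step, where you assert
$|\Omega|\gtrsim\int_{Q}|\det D_t\Psi^{J_0}_{x_0}(t)|\,dt$
with $Q=\prod_\ell[0,\tau_\ell]$, $\tau_\ell\sim\alpha_{(J_0)_\ell}/\rho_0$, on the grounds that ``flowing\ldots for time $\tau_\ell\sim\alpha_{(J_0)_\ell}/\rho_0$ stays inside $\Omega$.'' That is false: $\Omega$ is an arbitrary Borel set, and refinement only gives, for each $x$ in the good set and each $j$, a subset of times $t$ with $|t|\lesssim\alpha_1^\delta$ and $e^{tX_j}(x)\in\Omega$ of \emph{measure} $\gtrsim\alpha_j/\rho_0$ --- possibly spread out over a window of width up to $c\alpha_1^\delta$, not concentrated in $[0,\tau_\ell]$. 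The hypothesis $\diam\Omega\le c\alpha_1^\delta$ only confines the flow to a small ball, not to $\Omega$. Consequently the image of the full box $Q$ under $\Psi^{J_0}_{x_0}$ need not lie in $\Omega$, and the inequality $|\Omega|\gtrsim\int_Q|\det D\Psi|$ does not follow. What one actually controls is $\Psi^{J_0}_{x_0}(F_d)\subseteq\Omega$ for a possibly sparse tower $F_d$ (of measure $\gtrsim\prod\tau_\ell$) inside a much larger box $Q_w$, and two further ingredients are needed to pass from this to the desired lower bound: (i) the \emph{central-set structure} of $F_d$, which ensures that a polynomial of bounded degree is comparable to its sup-norm on a fixed fraction of $F_d$ (Lemma~\ref{L:F poly good}), since without it $F_d$ could cluster near the zero set of $\det D\Psi^N$ and defeat the $L^1$ estimate; and (ii) a uniform finite-to-one/multiplicity argument for $\Psi^{J_0}_{x_0}$ on that good fraction, which the paper gets from polynomial approximation plus Bezout (Lemma~\ref{L:Poly like}). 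Your closing remark that the band structure ``must be re-derived with uniform constants'' acknowledges this, but the inflation inequality as you have written it is not what that machinery delivers, and the reason you give for it is incorrect. Once you replace $\int_Q|\det D\Psi|$ with $|F_d|\cdot\|\det D\Psi^N_{x_0}\|_{C^0(Q_w)}$, use $w_\ell\ge\tau_\ell$, and invoke the two lemmas above, your computation does yield the claimed bound.
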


We note in particular that all constants and implicit constants are independent of $\rho_0$, $\Omega$, and the $\alpha_j$.  
          
We devote the remainder of this section to the proof of Lemma~\ref{L:reductions}.  We use the method of refinements, which originated in \cite{CCC} and was further developed in similar contexts in \cite{ChRl, TW}.  

Recalling \eqref{E:def tilde rho}, 
\begin{equation} \label{E:J0 beta0}
|\partial^{\beta_0} \det D\Psi_{x_0}^{J_0}(0)| \sim \rho_0^{|b_0|_1-1} =: \lambda_0, \qquad x_0 \in \Omega.
\end{equation}

As in \cite{TW}, for $w > 0$, we say that a set $S \subseteq [-w,w]$ is a central set of width $w$ if for any interval $I \subseteq [-w,w]$, 
$$
|I \cap S| \lesssim \bigl(\tfrac{|I|}{w}\bigr)^\eps |S|.
$$

\begin{lemma} \label{L:central refinement}
For each subset $\Omega' \subseteq \Omega$ with $\rho(\Omega') \gtrsim \alpha_1^{C\eps} \rho(\Omega)$ and each $1 \leq j \leq k$, there exists a refinement $\langle \Omega' \rangle_j \subseteq \Omega'$ with $\rho(\langle \Omega' \rangle_j) \gtrsim \alpha_1^{2C\eps} \rho(\Omega')$, such that for each $x \in \langle \Omega' \rangle_j$,
\begin{equation} \label{E:def Fj}
\mathcal F_j(x,\langle \Omega' \rangle_j) \subseteq \{t : |t| \lesssim \alpha_1^\delta \ctc{and} e^{tX_j}(x) \in \langle \Omega' \rangle_j\}
\end{equation} 
is a central set whose width $w_j$ and measure satisfy
\begin{equation} \label{E:refined width}
\rho_0^{-1} \alpha_1^{2C\eps} \alpha_j \lesssim w_j \leq c \alpha_1^\delta \qtq{and} |\mathcal F_j(x,\langle \Omega' \rangle_j)| \gtrsim \rho_0^{-1} \alpha_1^{2C\eps}\alpha_j.
\end{equation}
\end{lemma}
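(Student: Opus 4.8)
The plan is to run the method of refinements of \cite{CCC, ChRl, TW} one fiber at a time: slice $\Omega'$ along the flow lines of $X_j$, discard the fibers along which the slice is too thin, apply a one-dimensional central-set extraction to each surviving slice, and then pigeonhole the resulting widths. Fix $j$. First I would record the geometry. Since $\pi_j$ is a submersion, $X_j$ is nonvanishing and tangent to its fibers, so $|X_j|\sim 1$ and $\mathcal H^1(\pi_j^{-1}\{y\}\cap\supp a)\lesssim 1$; after a harmless partition of unity and using $\diam(\Omega)\lesssim\alpha_1^\delta$, each fiber meets $\Omega$ in a single arc, which I parametrize by $t\mapsto e^{tX_j}(x_y)$, with $|t|\lesssim\alpha_1^\delta$, for a measurable section $x_y$ of $\pi_j$ over $\pi_j(\Omega)$ (valued in a small neighborhood of $\Omega$). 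Writing $\mathcal F_j(x,S):=\{t:e^{tX_j}(x)\in S\}$, we have $d\mathcal H^1=|X_j|\,dt$ along such an arc.

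\emph{Step 1: the slices are large on average, and we discard the thin ones.} By the coarea formula (as in \eqref{E:coarea}), $|X_j|\sim 1$, and $\rho\sim\rho_0$ on $\Omega$,
\[
\rho(\Omega')\sim\rho_0\int_{\pi_j(\Omega')}|\mathcal F_j(x_y,\Omega')|\,dy,
\]
so the average of $y\mapsto|\mathcal F_j(x_y,\Omega')|$ over $\pi_j(\Omega')$ is $\gtrsim\rho(\Omega')/(\rho_0|\pi_j(\Omega)|)\gtrsim\alpha_1^{C\eps}\rho_0^{-1}\alpha_j$, using $|\pi_j(\Omega')|\le|\pi_j(\Omega)|$ and the hypothesis on $\rho(\Omega')$. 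Deleting from $\Omega'$ every fiber along which the slice is shorter than half this average produces $\Omega_1\subseteq\Omega'$ with $\rho(\Omega_1)\gtrsim\rho(\Omega')$; because removing entire fibers leaves the slices of the remaining points unchanged, $\mathcal F_j(x,\Omega_1)=\mathcal F_j(x,\Omega')$ and $|\mathcal F_j(x,\Omega_1)|\gtrsim\alpha_1^{C\eps}\rho_0^{-1}\alpha_j$ for every $x\in\Omega_1$, and each slice still lies in an interval of length $\lesssim\alpha_1^\delta$.

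\emph{Step 2: central intervals, common width, assembly.} To each slice $E_y:=\mathcal F_j(x_y,\Omega_1)$ I would apply the one-dimensional central-set lemma of \cite{TW} (cf.\ \cite{ChRl}): there are an interval $I_y$ and a subset $E_y'\subseteq E_y\cap I_y$, retaining a fraction $\gtrsim\alpha_1^{\eps}$ of the measure (after possibly shrinking $c$), such that $E_y'$, recentered, is a central set of width $|I_y|$. Then $\rho_0^{-1}\alpha_1^{O(\eps)}\alpha_j\lesssim|E_y'|\le|I_y|\lesssim\alpha_1^\delta$, so (using $\rho_0\lesssim 1$, $\alpha_1\le\alpha_j$, $\alpha_1<c$) the dyadic values of $|I_y|$ number $\lesssim 1+|\log\alpha_1|\lesssim\alpha_1^{-\eps}$; pigeonholing the $\rho$-mass of $\Omega_1$ over these scales yields a single $w_j$ for which the fibers with $|I_y|\sim w_j$ carry $\gtrsim\alpha_1^{\eps}\rho(\Omega_1)$ of that mass. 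Set $\langle\Omega'\rangle_j:=\bigcup\{e^{tX_j}(x_y):t\in E_y'\}$ over those fibers. Since $\rho\sim\rho_0$ uniformly, the per-fiber loss is $\gtrsim\alpha_1^{\eps}$, so
\[
\rho(\langle\Omega'\rangle_j)\gtrsim\alpha_1^{\eps}\,\alpha_1^{\eps}\,\rho(\Omega_1)\gtrsim\alpha_1^{O(\eps)}\rho(\Omega').
\]
For $x=e^{sX_j}(x_y)\in\langle\Omega'\rangle_j$ one checks directly that $\mathcal F_j(x,\langle\Omega'\rangle_j)=E_y'-s$, a translate of $E_y'$, hence a central set of width $w_j$ contained in $\{|t|\lesssim\alpha_1^\delta\}$, of measure $|E_y'|\gtrsim\rho_0^{-1}\alpha_1^{O(\eps)}\alpha_j$, while $w_j=|I_y|\ge|E_y'|\gtrsim\rho_0^{-1}\alpha_1^{O(\eps)}\alpha_j$ and $w_j\lesssim\alpha_1^\delta$. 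Since the admissible constant $C$ in Lemma~\ref{L:reductions} is at our disposal, choosing it large enough in terms of $d$ so that each $O(\eps)$ above is $\le 2C\eps$ yields exactly \eqref{E:def Fj}--\eqref{E:refined width} and $\rho(\langle\Omega'\rangle_j)\gtrsim\alpha_1^{2C\eps}\rho(\Omega')$.

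I expect the main obstacle to be the one-dimensional central-set extraction of Step 2 --- producing a sub-interval on which the slice, suitably normalized, satisfies the interval condition defining a central set while losing at most a $\alpha_1^{O(\eps)}$-fraction of its measure --- together with the bookkeeping needed to verify that every loss incurred is of the form $\alpha_1^{O(\eps)}$ with an exponent that can be absorbed a priori into the single admissible constant $C$; this is precisely why the statement is phrased with $C\eps$ in the hypothesis and $2C\eps$ in the conclusion. The differential-geometric ingredients (the uniform parametrization of fibers by the flow of $X_j$, the coarea identity, $|X_j|\sim 1$, and $\mathcal H^1(\pi_j^{-1}\{y\}\cap\supp a)\lesssim 1$) are routine given the compactness of $\supp a$ in $U$ and the small-diameter normalization already in force from Lemma~\ref{L:reductions}; the observation that deleting whole fibers leaves the remaining slices unchanged is what makes the two-stage discard/central-extraction procedure self-consistent.
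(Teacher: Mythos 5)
Your proposal is correct and follows essentially the same route as the paper's own sketch: (i) discard the thin $\pi_j$-fibers using the coarea formula, (ii) apply a one-dimensional central-set extraction on each surviving fiber (via the minimal-dyadic-interval trick of \cite{TW,ChRl}), (iii) pigeonhole over the $\lesssim|\log\alpha_1|$ dyadic widths, and (iv) use the observation that a $0$-containing translate of a central set of width $w$ is again central of width $2w$ to conclude that $\mathcal F_j(x,\langle\Omega'\rangle_j)$ is central for every $x$, not just for the base point of the fiber. The bookkeeping is carried out slightly differently (the paper states the per-fiber loss as a factor $|S|^{1+2\eps}/|S|$, you state it as $\alpha_1^{O(\eps)}$; these coincide because $\alpha_1^{1+C\eps}\lesssim|S|\lesssim\alpha_1^\delta$), but the underlying argument is the same.
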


This lemma has essentially the same proof as Lemma~8.2 of \cite{TW}, but we sketch the argument for the convenience of the reader.

\begin{proof}[Sketch proof of Lemma~\ref{L:central refinement}]
First we discard shorter-than-average $\pi_j$ fibers in $\Omega'$, leaving a subset $\Omega'' \subseteq \Omega'$ with $\rho(\Omega'') \gtrsim \rho(\Omega')$ such that for each $x \in \Omega''$,
$$
|\{t : |t|\lesssim \alpha_1^\delta, \ctc{and} e^{tX_j}(x) \in \Omega''\}| \gtrsim \tfrac{|\Omega'|}{|\pi_j(\Omega')|} \gtrsim \alpha_1^{C\eps} \rho_0^{-1} \alpha_j.
$$

Next, if $S \subseteq [-c\alpha_1^\delta,c\alpha_1^\delta]$ is a measurable set, it contains a translate $S'$ of a central set of measure at least $|S|^{1+2\eps}$ and width at most $c\alpha_1^\delta$.  Indeed, take $S' = S \cap I'$, where $I'$ is a minimal length dyadic interval with $|S \cap I'| \geq (\frac{|I'|}{\alpha_1^\delta})^\eps |S|$.  

Using the exponential map, each $\pi_j$ fiber in $\Omega''$ is naturally associated to a set $S \subseteq [-c\alpha_1^\delta,c\alpha_1^\delta]$; $S$ can be refined to a translate $S'$ of a central set; and $S'$ is then a fiber of the set $\langle \Omega' \rangle_j$.  By the definition of exponentiation, for $x \in \langle \Omega' \rangle_j$ the set $\mathcal F_j(x,\langle \Omega' \rangle_j)$ in \eqref{E:def Fj} contains 0, and it is easy to see that a 0-containing translate of a central set of width $w$ is a central set of width $2w$.  Finally, by pigeonholing, we can select only those fibers having the most popular dyadic width (there are at most $\log \alpha_1$ options).
\end{proof}

Write $J_0 = (j_1,\ldots,j_d)$.  With $\Omega_0 := \Omega$, for $1 \leq i \leq d$ we define
$$
\Omega_i := \langle \Omega_{i-1} \rangle_{j_{d-i+1}}.
$$
By Lemma~\ref{L:central refinement}, for each $i$, $\rho(\Omega_i) \gtrsim  \alpha_1^{C\eps} \rho(\Omega)$.  

Fix $x_0 \in \Omega_d$.  Let 
$$
F_1 := \mathcal F_{j_1}(x_0,\Omega_d), \qquad x_1(t) := e^{tX_{j_1}}(x_0),
$$
and for $2 \leq i \leq d$, let
\begin{gather*}
F_i := \bigl\{(t_1,\ldots,t_i) : (t_1,\ldots,t_{i-1}) \in F_{i-1}, \: t_i \in \mathcal F_{j_i}(x_{i-1}(t_1,\ldots,t_{i-1}),\Omega_{d-i+1})\bigr\}\\
x_i(t_1,\ldots,t_i) := e^{t_i X_{j_i}} x_{i-1}(t_1,\ldots,t_{i-1}).
\end{gather*}

By construction, for each $i$ and each $(t_1,\ldots,t_i) \in F_i$, 
$$
x_i(t_1,\ldots,t_i) \in \Omega_{d-i+1} \subseteq \Omega_{d-i},
$$
so $\mathcal F_{j_{i+1}}(x_i(t_1,\ldots,t_i),\Omega_{d-i})$ is a central set whose width and measure satisfy \eqref{E:refined width} (with $j_{i+1}$ in place of $j$).  Furthermore, 
\begin{equation} \label{E:Fd}
\Psi^{J_0}_{x_0}(F_d) \subseteq \Omega \qtq{and} |F_d| \gtrsim \rho_0^{-d}\alpha_1^{C\eps}\alpha^{\deg J_0};
\end{equation}
here we recall that $\deg J$ is the $k$-tuple whose $i$-th entry is the number of appearances of $i$ in the $d$-tuple $J$.  

Let $\Psi_{x_0}^N$ be the degree $N$ Taylor polynomial of $\Psi_{x_0}^{J_0}$, where $N \geq |b_0|_1+1$ is a large integer to be chosen later.  Let $Q_w = \prod_{i=1}^d [-w_i,w_i]$ and let $Q_1 = Q_{(1,\ldots,1)}$.  By scaling, the equivalence of all norms on the degree $N$ polynomials in $d$ variables, and \eqref{E:J0 beta0},
\begin{align*}
&\|\det D\Psi_{x_0}^N\|_{C^0(Q_w)} 
= \sup_{t \in Q_1} |\det D\Psi_{x_0}^N(w_1t_1,\ldots,w_dt_d)| \\
&\qquad \sim_{N,d} \sum_\beta w^\beta |\partial^\beta \det D\Psi_{x_0}^N(0)|
\geq w^{\beta_0} |\partial^{\beta_0} \det D\Psi_{x_0}^N(0)|
\sim w^{\beta_0} \lambda_0.
\end{align*}
Thus by \eqref{E:refined width}, the definition of $\lambda_0$, and some arithmetic,
\begin{equation} \label{E:det DPsi lb}
\|\det D\Psi_{x_0}^N\|_{C^0(Q_w)} \gtrsim \rho_0^{d-1} \alpha_1^{C\eps}\alpha^{\deg_{J_0}\beta_0}.
\end{equation}
(We recall that $\deg_J \beta$ is the $k$-tuple whose $i$-th entry equals $\sum_{\ell:J_\ell = i} \beta_\ell$.)

\begin{lemma} \label{L:F poly good}
If $P$ is any degree $N$ polynomial on $\R^d$, there exists a subset $F_d' \subseteq F_d$ such that $|F_d'| \gtrsim_{N,\eps,d} |F_d|$ and 
$$
|P(t)| \gtrsim_{N,\eps,d} \|P\|_{C^0(Q_w)}, \qquad t \in F_d'.
$$
\end{lemma}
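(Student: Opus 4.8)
The plan is to prove the statement by induction on the number of variables, the engine being a one-variable estimate that combines a classical sublevel-set bound for polynomials with the defining property of central sets.

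\emph{The one-variable core.} I would first establish the $d=1$ case. Let $p$ be a polynomial of degree at most $N$ in one variable and $w>0$. By a classical sublevel-set estimate for polynomials (Remez type, after rescaling $[-w,w]$ to $[-1,1]$), the set $\{t\in[-w,w]:|p(t)|\le\lambda\|p\|_{C^0([-w,w])}\}$ has measure $\lesssim_N\lambda^{1/N}w$; moreover it is a union of at most $N$ intervals, since $p^2-\lambda^2\|p\|_{C^0([-w,w])}^2$ has at most $2N$ real zeros. Each component interval $I'$ lies in $[-w,w]$ and has $|I'|\lesssim_N\lambda^{1/N}w$, so if $S$ is a central set of width $w$ then $|I'\cap S|\lesssim(|I'|/w)^\eps|S|\lesssim_{N,\eps}\lambda^{\eps/N}|S|$; summing over the $\le N$ components gives $|\{t\in S:|p(t)|\le\lambda\|p\|_{C^0([-w,w])}\}|\lesssim_{N,\eps}\lambda^{\eps/N}|S|$. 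Choosing $\lambda=\lambda(N,\eps)>0$ small enough that the right side is at most $\tfrac12|S|$ yields a subset of $S$ of measure at least $\tfrac12|S|$ on which $|p|\ge\lambda\|p\|_{C^0([-w,w])}$, which is the one-variable case of the lemma.

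\emph{Descending the box, and the induction.} To pass the sup-norm over $Q_w=\prod_{i=1}^d[-w_i,w_i]$ down to one over $Q_{w'}:=\prod_{i=1}^{d-1}[-w_i,w_i]$, fix $N+1$ equally spaced nodes $\tau_0,\dots,\tau_N\in[-w_d,w_d]$; the associated Lagrange basis polynomials have $C^0([-w_d,w_d])$-norm $\lesssim_N 1$, uniformly in $w_d$, so from $P(t',t_d)=\sum_m P(t',\tau_m)L_m(t_d)$ we get $\|P\|_{C^0(Q_w)}\lesssim_N\max_m\|P(\cdot,\tau_m)\|_{C^0(Q_{w'})}$. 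Fixing a maximizing node $\tau^*$ and putting $\widetilde P(t'):=P(t',\tau^*)$, a polynomial of degree $\le N$ in $d-1$ variables, we have $\|\widetilde P\|_{C^0(Q_{w'})}\gtrsim_N\|P\|_{C^0(Q_w)}$. Now recall that $F_d$ is obtained from the $(d-1)$-dimensional set $F_{d-1}$ — which is of the same form, with $d-1$ exponentiations and box $Q_{w'}$, so the inductive hypothesis applies to it — by attaching over each $t'\in F_{d-1}$ the fiber $S_{t'}:=\mathcal F_{j_d}(x_{d-1}(t'),\Omega_1)$, a central set of width $w_d$ by Lemma~\ref{L:central refinement}. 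Apply the inductive hypothesis to $\widetilde P$ on $F_{d-1}$ to get $G\subseteq F_{d-1}$ with $|G|\gtrsim_{N,\eps,d}|F_{d-1}|$ on which $|\widetilde P|\gtrsim_{N,\eps,d}\|\widetilde P\|_{C^0(Q_{w'})}\gtrsim_N\|P\|_{C^0(Q_w)}$. For each $t'\in G$, the one-variable polynomial $t_d\mapsto P(t',t_d)$ has $C^0([-w_d,w_d])$-norm at least $|P(t',\tau^*)|=|\widetilde P(t')|\gtrsim\|P\|_{C^0(Q_w)}$, so the one-variable core applied to it and to $S_{t'}$ produces $S_{t'}^\sharp\subseteq S_{t'}$ with $|S_{t'}^\sharp|\ge\tfrac12|S_{t'}|$ on which $|P(t',t_d)|\gtrsim_{N,\eps,d}\|P\|_{C^0(Q_w)}$. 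Then $F_d':=\{(t',t_d):t'\in G,\ t_d\in S_{t'}^\sharp\}$ has the required pointwise lower bound, and
\[
|F_d'|=\int_G|S_{t'}^\sharp|\,dt'\ \ge\ \tfrac12\int_G|S_{t'}|\,dt'\ \gtrsim_{N,\eps,d}\ \int_{F_{d-1}}|S_{t'}|\,dt'\ =\ |F_d|,
\]
the penultimate inequality combining $|G|\gtrsim_{N,\eps,d}|F_{d-1}|$ with the fact that the fiber measures $|S_{t'}|$ are mutually comparable as $t'$ ranges over $F_{d-1}$. The base case $d=1$ is the one-variable core.

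\emph{Main obstacle.} The only genuine analytic input is the one-variable core, which is elementary. The step requiring the most care is the Fubini accounting in the induction: since the lemma claims a loss depending only on $N$, $\eps$, $d$ — and in particular no power of $\alpha_1$ — one cannot afford to pigeonhole the fiber measures $|S_{t'}|$ at this stage, and must instead lean on the uniformity (comparability of all fibers at each level) that is built into the refinement construction of Lemma~\ref{L:central refinement} via the pigeonholing to the most popular dyadic width. A secondary point is that the descent in the second step must keep all relevant quantities on the interval $[-w_d,w_d]$ itself rather than on a dilate of it, which is why the interpolation nodes are placed there.
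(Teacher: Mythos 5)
Your strategy --- a one--variable Remez/sublevel--set estimate interacting with the definition of a central set, iterated over the tower structure of $F_d$ via Fubini --- is the same as in the argument the paper cites (Lemma~6.2 of \cite{ChRl}, Lemma~7.3 of \cite{TW}), and the one--variable core and the Lagrange descent are carried out correctly.

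The gap is at exactly the step you flag. You need $\int_G |S_{t'}|\,dt' \gtrsim \int_{F_{d-1}}|S_{t'}|\,dt'$, and you derive it from $|G|\gtrsim_{N,\eps,d}|F_{d-1}|$ together with the assertion that the fiber measures $|S_{t'}|$ are mutually comparable, attributing this to the pigeonhole ``to the most popular dyadic width'' in Lemma~\ref{L:central refinement}. But that pigeonhole fixes only the \emph{width} of the central--set fibers, not their \emph{measure}: the refinement lemma provides the lower bound $|S_{t'}|\gtrsim\rho_0^{-1}\alpha_1^{2C\eps}\alpha_j$ of \eqref{E:refined width} and the trivial upper bound $|S_{t'}|\lesssim w_j$, and these two quantities can differ by a power of $\alpha_1$. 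Two central sets of the same width need not have comparable measure, so removing a small--Lebesgue--measure subset of $F_{d-1}$ may remove almost all of the fiber--weighted mass. This is not a cosmetic point: without measure uniformity of the fibers the statement is genuinely false. For instance with $d=2$, $S_1=[-w_1,w_1]$, and $|S_{t_1}|=w_2$ on $[0,\eta w_1]$ but $|S_{t_1}|=w_2/M$ elsewhere, the degree--one polynomial $P(t_1,t_2)=t_1-\eta w_1/2$ has its sublevel set $\{|P|<c\|P\|_{C^0(Q_w)}\}$ covering the heavy slab $[0,\eta w_1]$ whenever $\eta<c$, so for $M$ large no admissible fraction of $|F_2|$ survives. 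The repair goes where you say it cannot: one must additionally pigeonhole the fiber \emph{measures} into dyadic classes inside the proof of Lemma~\ref{L:central refinement} itself, where the cost of an extra $O(\log\alpha_1)$ factor is absorbed into the $\alpha_1^{2C\eps}$ loss that lemma already incurs. With that strengthening, all fibers at a given level of the tower have comparable measure, and your Fubini step --- and hence your whole induction --- closes.
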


The lemma follows from Lemma~6.2 of \cite{ChRl} or Lemma~7.3 of \cite{TW}.  Roughly, if $S$ is a central set of width $w_0$ and $p$ is a degree $N$ polynomial, $p$ is close to $\|p\|_{C^0([-w_0,w_0])}$ on most of $S$.  This is because the set where $p$ is small is the union of at most $N$ small intervals.  Recalling how our set $F_d$ was constructed (from a `tower' of central sets), it is possible to iterate $d$ times to obtain the lemma.  

Now we use $\Psi_{x_0}^N$ to control $\Psi_{x_0}^{J_0}$ via the following lemma, which just paraphrases Lemma~7.1 of \cite{ChRl}.  We recall that $Q_1$ is the unit cube.

\begin{lemma} \label{L:Poly like}
Let $N,C_1,c_2,c_3 > 0$.  There exists a constant $c_0>0$, depending on $C_1,c_2,c_3,N,d$, such that the following holds.  Let $\Psi:Q_1 \to \R^d$ be twice continuously differentiable and let $\Psi^N:\R^d \to \R^d$ be a degree $N$ polynomial.  Set $J_\Psi := \|\det D\Psi\|_{C^0(Q_1)}$ and assume that 
\begin{equation} \label{E:Psi PsiN}
\|\Psi\|_{C^0(Q_1)} \leq C_1, \qquad \|\Psi - \Psi^N\|_{C^2(Q_1)} \leq c_0 \scriptJ_\Psi^2.
\end{equation}
Let $G \subseteq Q_1$ be a Borel set with the property that for any degree $N^d$ polynomial $P:\R^d \to \R$,
\begin{equation} \label{G poly good}
|\{t \in G : |P(t)| \geq c_2 \|P\|_{C^0(Q_1)}\}| \geq c_3 |G|.
\end{equation}
Then
$$
|\Psi(G)| \geq c_0 |G|\|\det D\Psi^N\|_{C^0(Q_1)}.
$$
\end{lemma}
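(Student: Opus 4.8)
The plan is to follow the proof of Lemma~7.1 of \cite{ChRl}, which I now sketch.  We may assume $|G|>0$ and $\tau_0:=\|\det D\Psi^N\|_{C^0(Q_1)}>0$, there being nothing to prove otherwise.  I would first record some a priori bounds.  Since $\Psi^N$ is a polynomial of degree $\leq N$ and, by \eqref{E:Psi PsiN}, $\|\Psi^N\|_{C^0(Q_1)}\leq C_1+c_0 J_\Psi^2$, equivalence of norms on this finite-dimensional space gives $\|\Psi^N\|_{C^2(Q_1)}\lesssim_{N,d}\|\Psi^N\|_{C^0(Q_1)}$; a short bootstrap (routine once $c_0$ is admissibly small, and automatic in the intended application, where $\Psi$ is a composition of flows of vector fields of bounded $C^2$ norm) then shows $\|\Psi\|_{C^2(Q_1)}\lesssim_{C_1,N,d}1$ and $J_\Psi,\tau_0\lesssim_{C_1,N,d}1$.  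Because $\det$ is a polynomial in the matrix entries, $\|\det D\Psi-\det D\Psi^N\|_{C^0(Q_1)}\lesssim_{C_1,N,d}\|D\Psi-D\Psi^N\|_{C^0(Q_1)}\leq c_0 J_\Psi^2$; comparing the two sides and using $\tau_0\lesssim 1$, this forces $J_\Psi\sim\tau_0$ once $c_0$ is small.

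Next, since $\det D\Psi^N$ is a polynomial of degree $\leq d(N-1)\leq N^d$, hypothesis \eqref{G poly good} applied with $P=\det D\Psi^N$ produces a Borel set $G'\subseteq G$ with $|G'|\geq c_3|G|$ on which $|\det D\Psi^N|\geq c_2\tau_0$; by the previous paragraph and a further smallness restriction on $c_0$, this yields $|\det D\Psi|\geq\tfrac{c_2}{2}\tau_0$ throughout $G'$.  By the area formula for the $C^1$ map $\Psi$,
$$
\int_{G'}|\det D\Psi|\,dt=\int_{\R^d}\#\bigl(\Psi^{-1}\{y\}\cap G'\bigr)\,dy.
$$
Thus, if I can show that $\#(\Psi^{-1}\{y\}\cap G')\leq M$ for a.e.\ $y$, with $M=M(N,d)$, then $\int_{G'}|\det D\Psi|\leq M|\Psi(G')|$, and hence
$$
|\Psi(G)|\geq|\Psi(G')|\geq\tfrac1M\int_{G'}|\det D\Psi|\geq\tfrac{c_2}{2M}\tau_0|G'|\geq\tfrac{c_2 c_3}{2M}\tau_0|G|,
$$
which is the assertion of the lemma (renaming the final constant $c_0$ and shrinking it to absorb the smallness requirements imposed along the way).

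It remains to establish the multiplicity bound, which is the crux.  On $G'$ we have $|\det D\Psi^N|,|\det D\Psi|\gtrsim_{c_2}\tau_0$ together with $\|D^2\Psi^N\|_{C^0(Q_1)},\|D^2\Psi\|_{C^0(Q_1)}\lesssim_{C_1,N,d}1$, so the quantitative inverse function theorem shows that each of $\Psi^N$ and $\Psi$ restricts to a diffeomorphism of any ball $B(t_0,c_1\tau_0)$, $t_0\in G'$, onto a set containing the ball of radius $\sim\tau_0^2$ about the image of $t_0$, with local inverse Lipschitz of constant $\lesssim\tau_0^{-1}$; here $c_1$ is admissible.  Now $\|\Psi-\Psi^N\|_{C^0(Q_1)}\leq c_0 J_\Psi^2\lesssim c_0\tau_0^2$, so for $c_0$ admissibly small each $t\in\Psi^{-1}\{y\}\cap G'$ satisfies $|\Psi^N(t)-y|=|(\Psi-\Psi^N)(t)|\ll\tau_0^2$; feeding $y$ into the local inverse of $\Psi^N$ near $t$ produces a point $s(t)\in(\Psi^N)^{-1}\{y\}$ with $|s(t)-t|\ll\tau_0$.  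If $s(t)=s(t')$ for $t,t'\in\Psi^{-1}\{y\}\cap G'$, then $t$ and $t'$ both lie in a ball of radius $\ll\tau_0$ about $s(t)$ on which $\Psi$ is injective, so $t=t'$; thus $t\mapsto s(t)$ is injective and $\#(\Psi^{-1}\{y\}\cap G')\leq\#\bigl((\Psi^N)^{-1}\{y\}\bigr)$.  For a.e.\ $y$ this last quantity is finite, hence at most $N^d$ by B\'ezout's theorem applied to the polynomial system $\Psi^N(t)=y$.  This gives $M\leq N^d$ and completes the argument.

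The main obstacle is precisely this multiplicity estimate: $C^2$-proximity of $\Psi$ to a polynomial does not, by itself, limit the number of preimages of a point.  What rescues the argument is that the error $\Psi-\Psi^N$ is required to be \emph{quadratically} small in $J_\Psi\sim\tau_0$, which is exactly the scale of the ``windows'' on which $\Psi^N$ behaves injectively over the region $\{|\det D\Psi^N|\gtrsim\tau_0\}$; this makes the solution sets of $\Psi(\cdot)=y$ and $\Psi^N(\cdot)=y$ match up there, after which B\'ezout's theorem supplies the bound.  (Alternatively, one simply cites Lemma~7.1 of \cite{ChRl} directly, as the statement here only paraphrases it.)
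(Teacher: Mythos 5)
Your argument is correct and follows essentially the same route as the paper's sketch (which itself paraphrases Lemma~7.1 of \cite{ChRl}): isolate $G'$ via \eqref{G poly good} applied to $P=\det D\Psi^N$, lower-bound $\int_{G'}|\det D\Psi|$, and control the multiplicity of $\Psi$ on $G'$ by matching its preimages with those of $\Psi^N$ on balls of radius $\sim\scriptJ_\Psi$ and invoking B\'ezout. Your explicit injection $t\mapsto s(t)$ is just a repackaging of the paper's step ``$\Psi(B)\subseteq\Psi^N(10B)$ for a finitely overlapping cover,'' and your remark that an a priori $C^2$ bound on $\Psi$ is needed (automatic in the application) mirrors a looseness already present in the paper's sketch, so it is not a new gap.
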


For the complete details, the reader may consult \cite{ChRl}.  We give a quick sketch of that argument here.  

\begin{proof}[Sketch proof of Lemma~\ref{L:Poly like}]
Let $P = \det D\Psi^N$ and let $G'$ denote the set on the left of \eqref{G poly good}.  By \eqref{E:Psi PsiN}, 
\begin{equation} \label{E:D's are sim}
|\det D\Psi(t)| \sim |P(t)| \sim \|P\|_{C^0(Q_1)} \sim \scriptJ_\Psi, \:\: t \in G', \qquad \|\Psi^N\|_{C^2(Q_1)} \leq 2C_1  .
\end{equation}
This first series of inequalities above imply that 
$$\int_{G'} |\det D\Psi| \geq c_0^{1/2}  |G|\|\det D\Psi^N\|_{C^0(Q_1)}.$$
It remains to show that $\Psi$ is finite-to-one on $G'$, so that $|\Psi(G')| \gtrsim \int_{G'} |\det D\Psi|$.  

First the local case.  For $c_0$ sufficiently small and $B$ any ball with radius $c_0^{1/2} \scriptJ_\Psi$ and center in $G'$, $\Psi,\Psi^N$ may be shown to be one-to-one on $10B$ and to satisfy
\begin{equation} \label{E:J sim scriptJ}
|\det D\Psi(t)| \sim |P(t)| \sim \scriptJ_\Psi, \qquad t \in 10B.
\end{equation} 
We cover $G'$ by a finitely overlapping collection of such balls $B$.

Globally, we know (it is an application of Bezout's theorem) that $\Psi^N$ is at most $C_{N,d}$-to-one on $G'$.  
Thus a point $x \in \R^d$ lies in $\Psi^N(10B)$ for at most $C_{N,d}$ balls $B \in \scriptB$.  We are done if we can show that $\Psi(B) \subseteq \Psi^N(10B)$.  By the mean value theorem (applied to $(\Psi^N)^{-1}$), then Cramer's rule, \eqref{E:D's are sim}, and \eqref{E:J sim scriptJ},
$$
\dist(\Psi^N(B), (\Psi^N(10B))^c) \geq \dist(B,(10B)^c) \|(D\Psi^N)^{-1}\|_{C^0(10B)}^{-1} > c_0^{1/2} \scriptJ_\Psi \diam(B).
$$
The right side is just $c_0 \scriptJ_\Psi^2 \geq \dist(\Psi(B),\Psi^N(B))$, so we are done.  
\end{proof}

Let $D_w$ denote the dilation $D_w(t_1,\ldots,t_d) = (w_1 t_1,\ldots,w_d t_d)$.  We will apply Lemma~\ref{L:Poly like} with $\Psi = \Psi_{x_0}^{J_0} \circ D_w$, $\Psi^N = \Psi_{x_0}^N \circ D_w$, and $G = D_w F_d$.  By Lemma~\ref{L:F poly good}, we just need to verify \eqref{E:Psi PsiN}.  

Since $w_j \leq 1$ for each $j$, $\|\Psi\|_{C^2(Q_1)} \leq \|\Psi_{x_0}^{J_0}\|_{C^2(Q_w)} \lesssim 1$.  For the error bound, \begin{equation}\label{E:poly approx}
\|\Psi_{x_0}^{J_0} - \Psi_{x_0}^N\|_{C^2(Q_w)} \lesssim \max_i w_i^{N-1} \|\Psi_{x_0}^{J_0}\|_{C^{N+1}(Q_w)} \lesssim (c \alpha_1^\delta)^N,
\end{equation}
where $c$ is as in \eqref{E:on Omega}. (Recall that implicit constants do not depend on $c$.)  We choose $N$ larger than $\delta^{-1}(10 \deg_{J_0} \beta_0 + 10d)$, and then choose $c$ sufficiently small.  Combining \eqref{E:poly approx}, \eqref{E:on Omega}, and \eqref{E:det DPsi lb}, 
$$
\|\Psi_{x_0}^{J_0} - \Psi_{x_0}^N\|_{C^2(Q_w)} \leq c_0 (\prod_j w_j)^2 \|\det D\Psi_{x_0}^N\|_{C^0(Q_w)}^2.
$$
For $c_0$ sufficiently small, this implies that 
$$\|\det D\Psi_{x_0}^{J_0} - \det D \Psi_{x_0}^N\|_{C^0(Q_w)} < \tfrac12  \|\det D\Psi_{x_0}^N\|_{C^0(Q_w)},$$
so $\|\det D\Psi_{x_0}^{J_0}\|_{C^0(Q_w)} \geq \tfrac12 \|\det D\Psi_{x_0}^N\|_{C^0(Q_w)}$.  Rescaling gives us \eqref{E:Psi PsiN}.  

Applying Lemma~\ref{L:Poly like}, inequality \eqref{E:det DPsi lb}, and $b_0 = \deg J_0 + \deg_{J_0}\beta_0$, 
$$
| \Omega| \geq |\Psi_{x_0}^{J_0}(F_d)| \gtrsim |F_d| \rho_0^{d-1} \alpha_1^{C\eps}\alpha^{\deg_{J_0}\beta_0} \gtrsim \rho_0^{-1} \alpha_1^{2C\eps} \alpha^{b_0}.
$$

The proof of Theorem~\ref{T:main} is finally complete.  


\section{Appendix:  The proof of Proposition~\ref{P:polytope prop}}


In this section we prove Proposition~\ref{P:polytope prop}, which was used in proving Propositions~\ref{P:optimality} and~\ref{P:equivalence}.  We fix, for the remainder of this section, a point $b_0 \in [0,\infty)^k$.  An object is admissible if it may be chosen from a finite collection, depending only on $b_0$, of such objects, and all implicit constants will be admissible (i.e.\ depending only on $b_0$).  

The following two lemmas show that conclusions (i) and (ii) of Proposition~\ref{P:polytope prop} are equivalent.  

\begin{lemma}\label{L:A to v0} 
If $\scriptA \subseteq \Z_0^k$ is a finite set and $b_0 \notin \scriptP(\scriptA)$, there exist $\eps > 0$ and $v_0 \in (\eps,1]^k$ such that $v_0 \cdot b_0 + \eps < v_0 \cdot p$ for every $p \in \scriptP(\scriptA )$.  
\end{lemma}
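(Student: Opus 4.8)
\emph{The plan.} I would deduce the lemma from the strong separation theorem, exploiting the ``upward-closed'' structure of $\scriptP(\scriptA)$ to upgrade the separating functional first to one with nonnegative coordinates and then to one with strictly positive coordinates, after which a normalization lands it in $(\eps,1]^k$.

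First I would dispose of the trivial case $\scriptA = \emptyset$: there $\scriptP(\scriptA) = \ch(\emptyset) = \emptyset$, and any $v_0 = (1,\ldots,1)$ with $\eps = \tfrac12$ satisfies the (vacuous) conclusion. So assume $\scriptA \neq \emptyset$. Next I would record that $\scriptP(\scriptA) = \ch(\scriptA) + [0,\infty)^k$, since a convex combination of points each translated by a vector of $[0,\infty)^k$ is a point of $\ch(\scriptA)$ translated by a vector of $[0,\infty)^k$, and conversely. Hence $\scriptP(\scriptA)$ is a nonempty closed convex subset of $\R^k$ (a Minkowski sum of the compact set $\ch(\scriptA)$ with the closed cone $[0,\infty)^k$), and moreover $|p|_1 \geq s := \min_{b \in \scriptA} |b|_1$ for every $p \in \scriptP(\scriptA)$. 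Since $b_0 \notin \scriptP(\scriptA)$, the strong separation theorem for a point and a disjoint closed convex set in finite dimensions yields $v \in \R^k \setminus \{0\}$ and a number $g > 0$ with $v \cdot b_0 + g \leq v \cdot p$ for all $p \in \scriptP(\scriptA)$.

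Now I would argue that $v$ has nonnegative entries: if $v^i < 0$ for some $i$, then fixing any $p \in \scriptP(\scriptA)$ the ray $p + [0,\infty) e_i$ lies in $\scriptP(\scriptA)$, yet $v \cdot (p + t e_i) \to -\infty$ as $t \to \infty$, contradicting the lower bound. To make the entries strictly positive without losing the gap, I would perturb, setting $v' := v + \eta(1,\ldots,1)$ for small $\eta > 0$: for $p \in \scriptP(\scriptA)$ one has $v' \cdot p - v' \cdot b_0 = (v \cdot p - v \cdot b_0) + \eta(|p|_1 - |b_0|_1) \geq g - \eta \max(0, |b_0|_1 - s)$, which stays $\geq g/2$ once $\eta$ is small, while every entry of $v'$ is now $\geq \eta > 0$. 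Finally, replacing $v'$ by $v_0 := v'/\|v'\|_\infty$ (which multiplies both sides of the separation inequality by the positive constant $1/\|v'\|_\infty$) gives $v_0 \in (0,1]^k$ with largest entry $1$; taking $\eps$ to be half the minimum of the smallest entry of $v_0$ and the rescaled gap $g/(2\|v'\|_\infty)$ gives $v_0 \in (\eps,1]^k$ and $v_0 \cdot b_0 + \eps < v_0 \cdot p$ for all $p \in \scriptP(\scriptA)$, as required. The only substantive points — not really obstacles — are the two upgrades of the separating functional: nonnegativity uses the unboundedness of $\scriptP(\scriptA)$ in the coordinate directions, and strict positivity while keeping a gap relies on the uniform bound $|p|_1 \geq s$, which controls the perturbation term $\eta(|p|_1 - |b_0|_1)$ from below; everything else is bookkeeping with scalars. (No admissibility is claimed in this lemma; that is handled separately in the appendix.)
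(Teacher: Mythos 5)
Your proof is correct and follows essentially the same route as the paper's: separate $b_0$ from the closed convex set $\scriptP(\scriptA)$, observe that the separating functional must be nonnegative because $\scriptP(\scriptA)$ contains coordinate rays, perturb by a small multiple of $(1,\ldots,1)$ to make it strictly positive while keeping a gap, and normalize. The only superficial differences are that you invoke strong separation to get the gap $g$ directly (the paper obtains it a posteriori from finiteness of $\scriptA$) and normalize by $\|v'\|_\infty$ rather than by $1+\delta$; these are cosmetic.
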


\begin{lemma} \label{L:v0 to A}
If $v_0 \in (0,1]^k$, there exists a finite set $\scriptA \subseteq \Z_0^k$ such that $b_0 \notin \scriptP(\scriptA)$ and 
$$
\{b \in \Z_0^k : v_0 \cdot b_0 < v_0 \cdot b\} \subseteq \scriptP(\scriptA).
$$
\end{lemma}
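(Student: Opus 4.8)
The plan is to let $\scriptA$ be the set of $\preceq$-minimal elements of the target set
$$
T := \{b \in \Z_0^k : v_0 \cdot b_0 < v_0 \cdot b\},
$$
and to verify the three required features of this $\scriptA$: that it is finite, that $T \subseteq \scriptP(\scriptA)$, and that $b_0 \notin \scriptP(\scriptA)$.

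First I would note that $T \neq \emptyset$, since $v_0$ has strictly positive entries and hence $(m,\ldots,m) \in T$ for every sufficiently large integer $m$. By Dickson's lemma (equivalently, the fact that $(\Z_0^k,\preceq)$ is a well-quasi-order), the set $\scriptA$ of $\preceq$-minimal points of $T$ is finite, and it is nonempty because $T$ is. Moreover every $b \in T$ satisfies $a \preceq b$ for some $a \in \scriptA$: if $b$ is not already minimal, replace it by a strictly $\preceq$-smaller element of $T$; since $|b|_1$ is a nonnegative integer that strictly decreases at each such replacement, the process terminates at an element of $\scriptA$.

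With this in hand, $T \subseteq \scriptP(\scriptA)$ is immediate: given $b \in T$, choose $a \in \scriptA$ with $a \preceq b$; then $b \in [0,\infty)^k + \{a\}$, and this set is one of those whose union is taken in the definition of $\scriptP(\scriptA)$, so $b \in \scriptP(\scriptA)$. For the remaining property I would use that $v_0$ has nonnegative entries: for every $a \in \scriptA$ and every $x \in [0,\infty)^k + \{a\}$ we have $v_0 \cdot x \geq v_0 \cdot a > v_0 \cdot b_0$, the strict inequality because $a \in T$. A strict linear inequality holding at every point of a set is inherited by every finite convex combination of those points (a weighted average of quantities exceeding $v_0\cdot b_0$ again exceeds $v_0\cdot b_0$), hence holds at every point of $\scriptP(\scriptA) = \ch\bigcup_{a \in \scriptA}([0,\infty)^k + \{a\})$; since $v_0 \cdot b_0$ does not exceed itself, this gives $b_0 \notin \scriptP(\scriptA)$.

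The only ingredient beyond unwinding the definition of $\scriptP$ is the finiteness of $\scriptA$, which is precisely Dickson's lemma, so I do not expect a genuine obstacle; the bookkeeping about which points lie in a shifted orthant and the elementary observation that strict inequalities pass to convex combinations are routine.
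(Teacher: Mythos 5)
Your proof is correct but takes a genuinely different route from the paper's. The paper gives an explicit construction: setting $\eps := \min_i v_0^i$ and $N := \lceil k\eps^{-1}(b_0 \cdot v_0 + 1)\rceil$, it observes that any $p \in \Z_0^k$ with $|p|_1 \geq N$ automatically satisfies $v_0 \cdot p \geq \eps (|p|_1/k) \geq v_0 \cdot b_0 + 1$, and then takes $\scriptA := \{b \in \Z_0^k : |b|_1 \leq N \text{ and } v_0 \cdot b > v_0 \cdot b_0\}$; finiteness is immediate from the $\ell^1$ bound, and every $b \in T$ with $|b|_1 > N$ is handled by decrementing coordinates until reaching a point of $\scriptA$. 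You instead take $\scriptA$ to be the set of $\preceq$-minimal elements of $T$ and invoke Dickson's lemma to get finiteness; the containment $T \subseteq \scriptP(\scriptA)$ and the separation of $b_0$ by the half-space $\{x : v_0 \cdot x > v_0 \cdot b_0\}$ then go exactly as in the paper. Both arguments produce a finite $\scriptA \subseteq T$, so the verification of the last two properties is essentially the same; what differs is where the finiteness comes from. The paper's explicit $\ell^1$ cutoff is elementary, self-contained, and quantitative, while your appeal to Dickson's lemma is cleaner and isolates the structural fact (that $(\Z_0^k,\preceq)$ is a well-quasi-order) that makes the lemma true. Either is a perfectly good proof of the statement as given.
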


\begin{proof}[Proof of Lemma~\ref{L:A to v0}]
We may assume that $b_0 \neq (0,\ldots,0)$ and $\scriptA \neq \emptyset$; otherwise, the result is trivial.  Since $b_0 \notin \scriptP(\scriptA)$, there exists $v_1 \in \R^k$ such that $v_1 \cdot b_0 < v_1 \cdot p$ for every $p \in \scriptP(\scriptA)$.  Since $\scriptP(\scriptA)$ contains a translate of $[0,\infty)^k$, $v_1 \in [0,\infty)^k$.  We may assume that $v_1 \in [0,1]^k$.  Let 
$$
\delta := \tfrac12|b_0|_1^{-1} \min_{b \in \scriptA} v_1 \cdot (b-b_0).
$$
Since $\scriptA$ is finite, $\delta >0$.  Let $v_2 := v_1 + (\delta,\ldots,\delta)$.  Then $v_2 \in [\delta,1+\delta]^k$.  If $b \in \scriptA$,
$$
b \cdot v_2 = v_1 \cdot b_0 + v_1 \cdot (b-b_0) + \delta |b|_1 \geq v_2 \cdot b_0 + \delta |b_0|_1\geq v_2 \cdot b_0 + \delta.
$$
The conclusion thus holds with $\eps := \frac12\frac\delta{1+\delta}$, $v_0 := \frac{v_2}{1+\delta}$.
\end{proof}

\begin{proof}[Proof of Lemma~\ref{L:v0 to A}]
Let $\eps := \min_i v_0^i$ and let $N := \lceil k\eps^{-1}(b_0 \cdot v_0 + 1)\rceil$.  If $p \in \Z_0^k$ and $|p|_1 \geq N$, 
$$
v_0 \cdot p \geq \min_j v_0^j \max_i p^i \geq \eps (\tfrac Nk) \geq b_0 \cdot v_0 + 1,
$$
so the conclusion holds with 
$$
\scriptA := \{b \in \Z_0^k : |b|_1 \leq N \ctc{and} v_0 \cdot b > v_0 \cdot b_0\}.
$$
\end{proof}

The following lemma implies that the conclusions of Proposition~\ref{P:polytope prop} hold whenever $\scriptB$ is a finite set with $\#\scriptB\leq k+1$.

\begin{lemma} \label{L:simplex extreme}
Let $\scriptB \subseteq \Z_0^k$ be a finite set.  Assume that $\#\scriptB \leq k+1$ and that $b_0 \notin \scriptP(\scriptB)$.  Then there exist admissible $\eps > 0$ and $v_0 \in (\eps,1]^k$ such that $b \cdot v_0 > b_0 \cdot v_0 + \eps$ for every $p \in \scriptP(\scriptB)$.  
\end{lemma}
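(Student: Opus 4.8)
The plan is to recast the statement in a cleaner form and then argue by induction on the dimension $k$, the entire difficulty being to keep every quantity admissible. Since $\scriptP(\scriptB)$ is invariant under adding vectors from $[0,\infty)^k$, and any covector separating such an upward-closed set from a point must itself be nonnegative, one has $\inf_{p\in\scriptP(\scriptB)}v_0\cdot p=\min_{b\in\scriptB}v_0\cdot b$ for $v_0\ge 0$; hence the conclusion is equivalent to the assertion that there are admissible $\eps>0$ and $v_0\in(\eps,1]^k$ with $v_0\cdot(b-b_0)>\eps$ for every $b\in\scriptB$. The key elementary observation, which I would record first, is that far-away points impose free constraints: if $v_0\in(\eps,1]^k$ and $\max_i b^i\ge(|b_0|_1+1)/\eps$, then $v_0\cdot(b-b_0)\ge\eps\max_i b^i-|b_0|_1\ge 1>\eps$ automatically. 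Since deleting points from $\scriptB$ only enlarges $\scriptP(\scriptB)$, such points may be ignored outright, so only the points of $\scriptB$ lying in a box whose size is governed by the target $\eps$ actually constrain the choice of $v_0$.

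For the base case $k=1$ one has $\scriptP(\scriptB)=[\min\scriptB,\infty)$ and $b_0<\min\scriptB$, so by integrality $\min\scriptB\ge b_0+1$; take $v_0=1$, $\eps=\tfrac12$. For $k\ge 2$ I would first dispose of the case in which some coordinate $i$ satisfies $b^i\ge b_0^i+1$ for \emph{every} $b\in\scriptB$: here $v_0:=(1+\eta)^{-1}\bigl(e_i+\eta(1,\dots,1)\bigr)$ with $\eta:=(2(|b_0|_1+1))^{-1}$ separates with an admissible gap, because the $e_i$ part contributes at least $1$ to each $v_0\cdot(b-b_0)$ while the $\eta(1,\dots,1)$ part costs less than $\tfrac12$.

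In the remaining case, for every coordinate $i$ there is some $b_{(i)}\in\scriptB$ with $b_{(i)}^i\le b_0^i$. Because $\#\scriptB\le k+1$, $\ch(\scriptB)$ is a simplex, and (since we are not in the previous case) $b_0\notin\scriptP(\scriptB)$ forces $b_0$ to lie strictly below some \emph{bounded} lower facet $F=\ch(\scriptB\setminus\{b^*\})$ of $\scriptP(\scriptB)$, i.e.\ below a face spanned by $k$ of the vertices. The outward normal $v_F\ge 0$ of $F$ then already separates $b_0$ from all of $\scriptB$ (strictly from $b^*$, which lies on the far side of $\mathrm{aff}(F)$). This is meant to reduce matters to a $(k-1)$-dimensional configuration: $\scriptB\setminus\{b^*\}$ has only $k$ points, so after projecting out a suitably chosen coordinate (checking that the projected point is still outside the projected polytope) and reattaching that coordinate to $v_0$ with an admissibly small weight, one invokes the inductive hypothesis in dimension $k-1$ to obtain an admissible separator, and then uses Lemmas~\ref{L:A to v0} and~\ref{L:v0 to A} to move freely between the covector and finite-set formulations.

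The step I expect to be the main obstacle is precisely the admissibility of this last reduction. The facet normal $v_F$ depends on $\scriptB$ and, after normalization, may have unboundedly small entries, so it cannot be used directly; and one cannot cap the (possibly huge) coordinates of the points of $\scriptB$ without risking destroying the hypothesis $b_0\notin\scriptP(\scriptB)$. The work is to show that the separating cone of $\scriptB$ always contains a covector of bounded ``complexity'' relative to $b_0$ only — concretely, that the tame part of $\scriptB$ (its points lying in a box whose size depends only on $b_0$, hence ranging over a finite list), together with the cheap constraints from the far points, already determines a common separating covector with a gap bounded below by an admissible quantity at each reduction stage. Carrying this through, $\eps$ may be taken to be a single admissible constant — a minimum over the finitely many tame configurations times a fixed negative power of $|b_0|_1$ — and a routine normalization then places $v_0$ in $(\eps,1]^k$, completing the induction.
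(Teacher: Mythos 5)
You have correctly isolated the key observation---that ``far'' points of $\scriptB$ (those with $\max_i b^i$ large compared with $|b_0|_1/\eps$) are automatically separated by any $v_0\in(\eps,1]^k$, so that only points in an admissibly bounded box genuinely constrain the choice of $v_0$---but the proposal does not turn that observation into a proof. The dimension reduction you sketch (passing to a facet normal of $\ch(\scriptB)$, projecting out a coordinate, and invoking an inductive hypothesis in $\Z_0^{k-1}$) is exactly the step you yourself flag as unresolved: the facet normal need not have admissibly large entries, you do not show that the projected $b_0$ remains outside the projected polytope, and the assertion that ``the work is to show'' the separating cone contains a covector of admissibly bounded complexity \emph{is} the content of the lemma, not a routine verification. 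There is also a small slip: deleting points from $\scriptB$ \emph{shrinks} $\scriptP(\scriptB)$ rather than enlarging it; the intended conclusion (that far points can be dropped) is still true, but the stated justification is backwards. As written the argument has a genuine gap.

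The paper's proof stays entirely in dimension $k$ and uses $\#\scriptB\le k+1$ only to bound the number of iterations, not to exploit a simplex geometry. It runs a bootstrap: after disposing of the case $|b_i|_1>|b_0|_1$ for every $i$ (take $v_0=(1,\ldots,1)$), one may assume some $b_1$ satisfies $|b_1|_1\le|b_0|_1$, hence is admissible. Inductively, if $\{b_1,\ldots,b_j\}$ is admissible, Lemma~\ref{L:A to v0} produces an \emph{admissible} pair $(\eps_j,v_j)$ separating $b_0$ from $\scriptP(\{b_1,\ldots,b_j\})$. Either $v_j$ already separates all of $\scriptB$, or some $b_{j+1}$ has $v_j\cdot b_{j+1}\le v_j\cdot b_0\le|b_0|_1$; since every entry of $v_j$ exceeds $\eps_j$, this forces $|b_{j+1}|_1\le|b_0|_1/\eps_j$, an admissible bound, so $b_{j+1}$ is admissible and is adjoined. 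Since $\#\scriptB\le k+1$ this terminates. In effect the ``tame'' points are discovered dynamically rather than identified a priori, and the admissible gap at each stage is supplied by Lemma~\ref{L:A to v0}---exactly the ingredient your reduction lacks. This is the route you should take to close the argument.
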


The same proof shows that for any finite $\scriptB$ with $b_0 \notin \scriptP(\scriptB)$, there exist $\eps>0$ and $v_0 \in (\eps,1]^k$, taken from a finite list that depends only on $b_0$ and $m$, such that $b \cdot v_0 > b_0 \cdot v_0 + \eps$ for every $p \in \scriptP(\scriptB)$, but for simplicity, we only prove the version that we use.  

\begin{proof}
The conclusion is trivial if $\scriptB = \emptyset$, so we write $\scriptB = \{b_1,\ldots,b_m\}$ with $m \leq k+1$.  By Lemma~\ref{L:A to v0}, the conclusion is trivial if $\{b_1,\ldots,b_m\}$ is admissible; we will reduce to this case.  

If $|b_i|_1 > |b_0|_1$, $1 \leq i \leq m$, the conclusion holds with $v_0 = (1,\ldots,1)$, $\eps = \tfrac12(\lceil |b_0|_1+1 \rceil-1)$.  Reindexing if necessary, we may assume that $|b_1|_1 \leq |b_0|_1$, in which case $\{b_1\}$ is admissible.  

Assume that for some $j < m$, $\{b_1,\ldots,b_j\}$ is admissible.  By assumption, $b_0 \notin \scriptP(\{b_1,\ldots,b_j\})$, so by Lemma~\ref{L:A to v0}, there exist admissible $\eps_j>0$, $v_j \in (\eps_j,1]^k$ such that $v_j \cdot b_0 + \eps_j < v_j \cdot b_i$ for $1 \leq i \leq j$.  If $v_j \cdot b_0 +\eps_j < v_j \cdot b_i$ for every $i$, the conclusion of the lemma holds with $\eps = \eps_j$, $v_0= v_j$.  Otherwise, after reindexing, we may assume that $v_j \cdot b_{j+1} \leq v_j \cdot b_0$.  Therefore $b_{j+1}$ is admissible, and hence $\{b_1,\ldots,b_{j+1}\}$ is admissible as well.  The procedure must terminate after at most $m$ ($\leq k+1$) steps, and so the lemma is proved.
\end{proof}

Lemma~\ref{L:simplex extreme} has the following corollary.

\begin{lemma} \label{L:simplex separation}
Under the hypotheses of Lemma~\ref{L:simplex extreme}, there exists an admissible $\eps > 0$ such that if 
$$
b(\theta) := \sum_{i=1}^m \theta_i b_i
$$
is any convex combination of $b_1,\ldots,b_m$, there exists an $i$, $1 \leq i \leq k$ such that $b^i(\theta) \geq b_0^i+\eps$.
\end{lemma}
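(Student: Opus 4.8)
The plan is to read this off directly from Lemma~\ref{L:simplex extreme} by a short pigeonhole argument; no new ideas are needed. First I would invoke Lemma~\ref{L:simplex extreme} to obtain an admissible $\eps_0 > 0$ and an admissible $v_0 \in (\eps_0,1]^k$ with $v_0 \cdot p > v_0 \cdot b_0 + \eps_0$ for every $p \in \scriptP(\scriptB)$. Since any convex combination $b(\theta) = \sum_{i=1}^m \theta_i b_i$ lies in $\ch\{b_1,\ldots,b_m\} \subseteq \scriptP(\scriptB)$, this inequality applies to $p = b(\theta)$, so $v_0 \cdot (b(\theta) - b_0) > \eps_0$.

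Next I would split the sum $v_0 \cdot (b(\theta) - b_0) = \sum_{i=1}^k v_0^i(b^i(\theta) - b_0^i)$ according to the sign of $b^i(\theta) - b_0^i$. The terms with $b^i(\theta) \le b_0^i$ are nonpositive since $v_0^i \ge 0$, so discarding them only increases the sum; and on the remaining terms $v_0^i \le 1$ gives
\[
\sum_{i \,:\, b^i(\theta) > b_0^i} \bigl(b^i(\theta) - b_0^i\bigr) \;\ge\; \sum_{i \,:\, b^i(\theta) > b_0^i} v_0^i \bigl(b^i(\theta) - b_0^i\bigr) \;\ge\; v_0 \cdot (b(\theta) - b_0) \;>\; \eps_0 .
\]
The left-most sum has at most $k$ terms, so at least one index $i$ must satisfy $b^i(\theta) - b_0^i > \eps_0/k$.

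Finally I would take $\eps := \eps_0/k$, which is admissible since $\eps_0$ is and $k$ is fixed; the strict inequality $b^i(\theta) > b_0^i + \eps$ certainly gives the asserted $b^i(\theta) \ge b_0^i + \eps$. I expect no real obstacle in this argument: all the substance is already contained in Lemma~\ref{L:simplex extreme}, and the only points requiring (minimal) care are keeping track of the inequality directions and of the fact that dividing an admissible constant by the fixed integer $k$ leaves it admissible.
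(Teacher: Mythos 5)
Your proposal is correct and follows essentially the same route as the paper: invoke Lemma~\ref{L:simplex extreme}, note $b(\theta)\in\scriptP(\scriptB)$, and then bound the dot product $v_0\cdot(b(\theta)-b_0)$ from above by a max of coordinate differences. Your version is actually slightly more careful than the paper's one-line chain (which as written tacitly needs $\sum_i v_0^i \le 1$, a normalization it never imposes); your explicit division by $k$ at the end patches this harmlessly while keeping $\eps$ admissible.
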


\begin{proof}
By Lemma~\ref{L:simplex extreme}, there exist admissible $\eps>0$, $v_0 \in (\eps,1]^k$ such that 
$$
\eps < (b(\theta)-b_0) \cdot v_0 \leq (\sum_{i=1}^k v_0^i) \max_{1 \leq i \leq k} (b^i(\theta)-b_0^i) \leq \max_{1 \leq i \leq k} (b^i(\theta)-b_0^i).
$$
\end{proof}

Finally, we are ready to complete the proof of Proposition~\ref{P:polytope prop}.

\begin{proof}[Proof of Proposition~\ref{P:polytope prop}]
Let $C>|b_0|_1$ be a large constant, to be determined (admissibly) in a moment.  Define $\mathcal A := \scriptB' \cup \scriptB''$, where 
\begin{align*}
\scriptB' &:= \{b \in \scriptB : |b|_1 \leq C\}\\
\scriptB'' &:= \{Ce_i:1 \leq i \leq k\}.
\end{align*}
Here $e_i$ denotes the $i$-th standard basis vector.  Then since $\scriptP(\scriptB'') = \scriptP(\{b \in \Z_0^k : |b|_1 \geq C\}$, $\scriptP(\scriptB) \subseteq \scriptP(\scriptA)$.  It remains to show that for $C$ sufficiently large, $b_0\notin\scriptP(\mathcal A)$.  

Assume that $b_0 \in \scriptP(\scriptA)$.  By Carath\'eodory's Theorem from combinatorics (see, for instance, \cite[p. 46]{ZieglerPolytopes}), $b_0 \succeq \sum_{l=1}^{k+1} \theta_l a_l$, for some $a_1,\ldots,a_{k+1} \in \scriptA$ and $0 \leq \theta_l \leq 1$ satisfying $\sum_l \theta_l = 1$.  Reindexing if necessary,
\begin{equation} \label{E:b0 geq}
b_0 \succeq \sum_{l=1}^j \theta_l Ce_{i_l} + \sum_{l=j+1}^{k+1} \theta_l b_l,
\end{equation}
where $b_{j+1},\ldots,b_{k+1} \in \scriptB'$.  Since $C>|b_0|_1$, $\sum_{l=j+1}^{k+1} \theta_l > 0$, and since $b_0 \notin \scriptP(\scriptB')\subseteq \scriptP(\scriptB)$, $\sum_{l=1}^j \theta_l > 0$.  

Let 
$$
b(\theta) := (\sum_{l=j+1}^{k+1} \theta_l)^{-1} \sum_{l=j+1}^{k+1} \theta_l b_l.
$$
By Lemma~\ref{L:simplex separation}, there exists an $i$, $1 \leq i \leq k+1$ such that $b^i(\theta) \geq b_0^i + \eps$, where $\eps > 0$ depends only on $b_0$ (crucially, not on $C$).  By \eqref{E:b0 geq}, 
$$
b_0 \succeq (\sum_{l=j+1}^{k+1} \theta_j)b(\theta),
$$
so comparing the $i$-th coordinates, we see that 
$$
\sum_{l=j+1}^{k+1} \theta_j \leq \frac{b_0^i}{b_0^i+\eps} \leq \frac{|b_0|_\infty}{|b_0|_\infty+\eps},  
$$
so 
\begin{equation} \label{E:theta lb}
\sum_{l=1}^j \theta_j \geq 1-\frac{|b_0|_\infty}{|b_0|_\infty+\eps} = \frac{\eps}{|b_0|_\infty+\eps}.
\end{equation}
On the other hand, by \eqref{E:b0 geq} and the fact that all coordinates of the $b_i$ are nonnegative, $\sum_{l=1}^j \theta_j \leq \frac{|b_0|_1}C$.  For $C=C(\eps,b_0)$ sufficiently large (admissible since $\eps$ is), this contradicts \eqref{E:theta lb}, and the proof of Proposition~\ref{P:polytope prop} is complete.
\end{proof}



\end{document}